\numberwithin{equation}{section}
\theoremstyle{plain}
\newtheorem{theorem}{Theorem}[section] 
\newtheorem{lemma}[theorem]{Lemma}
\newtheorem{corollary}[theorem]{Corollary}
\newtheorem{proposition}[theorem]{Proposition}
\theoremstyle{remark}
\newtheorem{definition}[theorem]{Definition}
\newtheorem{example}{Example}
\newtheorem{remark}[theorem]{Remark}
\newcommand{\dI}{\Delta^{m,k}_1}
\newcommand{\dII}{\Delta^{m,k}_2}
\newcommand{\dIp}{\Delta^{m',k'}_1}
\newcommand{\dIIp}{\Delta^{m',k'}_2}
\begin{document}
\begin{frontmatter}
\title{H\"older regularity and roughness: \\ construction and examples}
\runtitle{H\"older regularity and roughness}

\begin{aug}
\author[A]{\fnms{Erhan}~\snm{Bayraktar}\ead[label=e1]{erhan@umich.edu}\orcid{0000-0002-1926-4570}}
\author[B]{\fnms{Purba}~\snm{Das}\ead[label=e2]{purba.das@kcl.ac.uk}\orcid{0000-0002-2692-969X}}
\author[A]{\fnms{Donghan}~\snm{Kim}\ead[label=e3]{donghank@umich.edu}\orcid{0000-0003-1451-5032}}
\address[A]{Department of Mathematics, University of Michigan \printead[presep={,\ }]{e1,e3}}

\address[B]{Department of Mathematics, King's College London\printead[presep={,\ }]{e2}}
\end{aug}

\begin{abstract}
We study how to construct a stochastic process on a finite interval with given `roughness' and its finite joint moments. We first extend Ciesielski's isomorphism (1960) along a general sequence of partitions, and provide a characterization of H\"older regularity of a function in terms of the coefficients along Schauder basis. Using this characterization we propose a better (pathwise) estimator of H\"older exponent. As an additional application, we construct fake (fractional) Brownian motions with some path properties and finite moments same as (fractional) Brownian motions. These belong to non-Gaussian families of stochastic processes which are statistically difficult to distinguish from real (fractional) Brownian motions.
\end{abstract}

\begin{keyword}
\kwd{Ciesielski's isomorphism}	
\kwd{fake fractional Brownian motion}
\kwd{fractional Brownian motion} 
\kwd{generalized Faber-Schauder system}
\kwd{matching moments}
\kwd{normality tests}
\kwd{pathwise H\"older regularity estimator}
\kwd{roughness}

\end{keyword}

\end{frontmatter}


\section{Introduction} \label{sec. intro}
The degree of roughness of a function (or a sample path of a process) is often an important property when analysing financial time-series data; the H\"older exponent plays a crucial role in determining the level of roughness. For example, in the rough volatility literature, the fundamental question is how to model or estimate the roughness of the underlying volatility process, and this roughness is understood as the H\"older exponent (more precisely, critical H\"older exponent, see Definition \ref{def: holder exponent}).

Even though (the reciprocal of) the H\"older exponent of a path is theoretically different from the variation index along a given partition sequence (that is, the infimum value $p$ such that the $p$-th variation of the path is finite, see Definition \ref{def. variation index}, or \cite{das2022theory, Schied2022} for more details), these two concepts are linked through the self-similarity index in the case of fractional Brownian motions. This is why we often see that one estimates the variation index of a given path, instead of estimating the H\"older exponent (for example in rough volatility literature). However, without prior information on the process, there is no reason to believe why the H\"older exponent should coincide with the reciprocal of the variation index. In this regard, we provide explicit constructions of functions and processes which possess H\"older exponent different from the reciprocal of variation index (Example \ref{ex.different holder and variation index} and Remark \ref{rem.make random}).

In order to establish these examples, we use the Haar basis \cite{haar1910} and Faber-Schauder system \cite{schauder,semadeni1982}; some of earlier works \cite{schied2016b, schied2016} study how to construct a function with given pathwise quadratic variation ($2$-th variation), using the Haar and Faber-Schauder representation associated with the dyadic partition sequence. Since most of the financial time-series data are observed over non-uniform time intervals, the extension to a general non-uniform partition other than the dyadic partition would be useful in practice. Following the methods of Cont and Das \cite{das2021}, we extend the Schauder-type representation of a continuous function to a large class of refining partition sequences along which we define an orthonormal `non-uniform' Haar basis and a corresponding Schauder system. More concretely, any continuous function $x$ defined on a compact interval $[0, T]$ has a unique (generalized) Schauder representation
\begin{equation*}
	x(t) = x(0) + \big(x(T)-x(0)\big)t+ \sum_{m=0}^{\infty} \sum_{k \in I_m} \theta^{x, \pi}_{m,k} e^{\pi}_{m,k}(t), \qquad t \in [0, T],
\end{equation*}
along a finitely refining (Definition \ref{def.finite.refining}) partition sequence $\pi = (\pi_n)$ of $[0, T]$, where each $e^{\pi}_{m, k}$ is a Schauder function depending only on $\pi$ and $\theta^{x, \pi}_{m, k}$ is corresponding Schauder coefficient with an explicit expression in terms of $x$ and partition points of $\pi$ (Proposition~\ref{prop:coeff_hat_func}).

We then characterize the H\"older exponent of $x$ in terms of Schauder coefficients $\theta^{x, \pi}_{m, k}$ (Theorem \ref{main.thm}), which is a generalization of Ciesielski's isomorphism \citep{Ciesielski:isomorphism}, along a general refining partition sequence $\pi$ with certain conditions, namely balanced (Definition \ref{def.balance}) and complete refining (Definition \ref{def.complete refining}). This means that we can construct a process with any given critical H\"older exponent (i.e., any `roughness order') by controlling the Schauder coefficients, and this result is particularly useful when we can only observe high-frequency data (or discrete) points of a continuous process along a particular time sequence (e.g. from the financial market), but cannot observe it as an entire sample path. Moreover, we show that the variation index of a process along a given partition sequence may not be equal to the reciprocal of H\"older exponent, by constructing the aforementioned examples. Our characterization of H\"older exponent also provides a new way to estimate the H\"older regularity of a given function in terms of Schauder coefficients along a general sequence of partitions (Theorem \ref{thm.holder.estimate}), which is similar to the classical H\"older estimator from \cite{Jaffard04}, but subject to less approximation error.

In the case of fractional Brownian motions~(fBMs), its critical H\"older exponent (Definition~\ref{def: holder exponent}) coincides with Hurst index $H$ (and also equal to the reciprocal of variation index along a certain sequence of partitions). This yields the Schauder representation of fBMs along general partition sequences with any given Hurst index, where the Schauder coefficients are given as Gaussian random variables. Furthermore, we show that finite (joint) moments between the Schauder coefficients of a process are closely related to finite moments of the process; when constructing a process via Schauder representation, if we choose the Schauder coefficients to have the same mean and covariance structure as the Schauder coefficients of fBMs, then the resulting process will have the same mean and covariance as that of fBMs. This leaves us to explore the following question: can we come up with processes that are not fBM but statistically difficult to distinguish from fBMs? Our paper answers this question in an affirmative way (Subsection \ref{subsec. examples BM}, \ref{subsec. examples fBM}).

Historically non-martingale, non-Markovian processes are used in different fields including biology \citep{collins1995, filatova2008}, engineering \citep{leland1994}, physics \citep{sadhu2021}, geophysics \citep{gurbatov1997}, telecommunication networks \citep{taqqu1997network}, and
economics \citep{lo1991}. Very recently non-Markovian processes have become more and more common in finance; fBM with Hurst index $H< 1/2$ has been widely used for modeling volatility. When observing such financial data, often the infinite-dimensional distribution is not available in practice, but finitely many marginals are. These marginal distributions are determined by computing (joint) moments of data samples. If one can generate processes with the same first few moments as real (fractional) Brownian motions, those `fake' processes can easily be misdiagnosed as a (fractional) Brownian motion. Therefore, our answer to the above question suggests that one has to be careful when modeling a certain process as a (fractional) Brownian motion.

It turns out that a similar question has also been explored to construct fake versions of standard Brownian motion in the context of option pricing, where both the fake process and the standard Brownian motion are martingales. Madan and Yor \cite{madan2002} first construct a discontinuous martingale based on the Az\'ema–Yor solution of the Skorokhod embedding problem which has the same marginals as Brownian motion. Later on, Hamza and Klebaner \cite{hamza2007}, Albin \cite{albin2008}, Oleszkiewicz \cite{oleszkiewicz2008}, and more recently  Hobson \cite{hobson2016, hobson2013}, Beiglb\"ock, Lowther, Pammer, and Schachermayer \cite{Beiglbock2021} studied fake Brownian motion and fake martingales. We would like to remark that despite the same name \textit{fake}, we mimic (fractional) Brownian motions in a very different sense in this paper, and our method goes beyond semimartingales.

Section \ref{sec. Schauder representation, review} reviews generalized Schauder representation of continuous functions introduced in \cite{das2021}, and provides sufficient conditions on Schauder coefficients which ensure the continuity of functions or processes (Lemma \ref{lem: continuity condition}). Section \ref{sec. Holder and Schauder} provides a generalization of Ciesielski’s isomorphism (Theorem \ref{main.thm}) and a pathwise characterization of H\"older regularity of a continuous path (Theorem \ref{thm.holder.estimate}). Section \ref{sec. critical Holder and variation index} introduces concepts of critical H\"older exponent and variation index, and gives concrete examples of function and process for which variation index is different from the reciprocal of H\"older exponent (Example \ref{ex.different holder and variation index} and Remark \ref{rem.make random}). Section \ref{sec. mimicking BM} and \ref{sec. mimicking fBM} provide a method of constructing fake Brownian motion (Theorem \ref{thm.fake-BM}) and fractional Brownian motions (Theorem \ref{thm.fake-fBM}), respectively. In Section \ref{sec. moment}, we show that the finite (joint) moments of the Schauder coefficients uniquely determine the finite (joint) moments of the distribution of the process up to the same order (Theorem \ref{thm. finite moments}). This result allows us to mimic fBM up to any finite moments. Section \ref{sec: conclusion} provides concluding remarks.

\section{Schauder representation along general partition sequence} \label{sec. Schauder representation, review}
This section recalls the Schauder representation of a continuous function on compact support from an orthonormal non-uniform Haar basis along a general partition sequence, based on the recent results from Section 3 of \cite{das2021}. The proofs of the lemmata in this section can be found in Supplement A \cite{suppA}.

\subsection{Sequence of partitions}

For a fixed $T>0$, we shall consider a (deterministic) sequence of partitions $\pi=(\pi^n)_{n \in \mathbb{N}}$ of $[0,T]$
\begin{equation*}
	\pi^n = \left(0=t^n_1<t^n_2<\cdots<t^n_{N(\pi^n)}=T\right),
\end{equation*}
where we denote $N(\pi^n)$ the number of intervals in the partition $\pi^n$ and
\begin{equation*}
	\underline{\pi^n} := \inf_{i = 0, \cdots, N(\pi^n)-1} \vert t^n_{i+1} - t^n_i \vert, \qquad \qquad
	|\pi^n| := \sup_{i = 0, \cdots, N(\pi^n)-1} \vert t^n_{i+1} - t^n_i \vert,
\end{equation*}
the size of the smallest and the largest interval of $\pi^n$, respectively. By convention, $\pi^0 = \{0, T\}$. For example, the dyadic sequence of partitions, denoted by $\mathbb{T} \equiv \pi$, contains partition points $t^n_k = k/2^n$ for $n \in \mathbb{N}$, $k = 0, \cdots, 2^nT$. We shall now recall some relevant definitions from \cite{das2020,das2021}. 

\begin{definition} [Refining sequence of partitions]
	A sequence of partitions $\pi=(\pi^n)_{n \in \mathbb{N}}$ is said to be \textit{refining (or nested)}, if $t\in \pi^m$ implies $t\in \cap_{n\geq m} \pi^n$ for every $m \in \mathbb{N}$. In particular, we have $\pi^1\subseteq \pi^2 \subseteq \cdots$. 
\end{definition}


\begin{definition} [Finitely refining sequence of partitions]    \label{def.finite.refining}
	A sequence of partitions $\pi=(\pi^n)_{n \in \mathbb{N}}$ is said to be \textit{finitely refining}, if $\pi$ is refining with vanishing mesh, i.e., $|\pi^n| \rightarrow 0$ as $n \rightarrow \infty$, and there exists $ M < \infty$ such that $N(\pi^1) \le M$ and the number of partition points of $\pi^{n+1}$ within any two consecutive partition points of $\pi^n$ is always strictly bigger than zero and bounded above by $M$, irrespective of $n\in \mathbb{N}$. In particular, we have $\sup_{n \in \mathbb{N}} \frac{N(\pi^n)}{M^n}\leq 1$.
\end{definition}

\begin{definition} [Balanced sequence of partitions] \label{def.balance}
	A sequence of partitions $\pi=(\pi^n)_{n \in \mathbb{N}}$ is said to be \textit{balanced}, if there exists a constant $c>1$ such that $|\pi^n| \le c \underline{\pi^n}$ holds for every $n \in \mathbb{N}$.
\end{definition}

Every interval in the partition $\pi^n$ of a balanced sequence $\pi$ is asymptotically comparable. Note also that any balanced sequence $\pi$ satisfies for every $n \in \mathbb{N}$
\begin{equation}    \label{eq.wellbalanced}
	|\pi^n|\leq c \, \underline{\pi^n} \leq \frac{cT}{N(\pi^n)}.
\end{equation}
We next impose a condition on the mesh sizes of two consecutive levels.

\begin{definition} [Complete refining sequence of partitions]   \label{def.complete refining}
	A refining sequence of partitions $\pi=(\pi^n)_{n \in \mathbb{N}}$ is said to be \textit{complete refining}, if there exist positive constants $a$ and $b$ such that
	\begin{equation}    \label{def: complete refining}
		1+a \leq \frac{|\pi^n|}{|\pi^{n+1}|} \leq b  \quad \text{holds for every } n \in \mathbb{N}.
	\end{equation}
\end{definition}

\begin{remark} [Notation]
	Throughout this paper, we use the same symbols $a, b, c$, and $M$ to refer to the constants that appeared in Definitions \ref{def.finite.refining} -- \ref{def.complete refining}.
\end{remark}

The following result illustrates some straightforward properties of refining sequence of partitions.
\begin{lemma}   \label{lemma.prop.refining}
	Let $\pi = (\pi^n)_{n \in \mathbb{N}}$ be a refining sequence of partitions.
	\begin{enumerate} [label=(\roman*)]
		\item If $\pi$ is complete refining, then the mesh size $|\pi^n|$ converges exponentially fast to zero.
		\item If $\pi$ is both finitely refining and balanced, then
		\begin{equation*}
			\limsup_{n \in \mathbb{N}} \frac{|\pi^n|}{\underline{\pi^{n+1}}}<\infty.
		\end{equation*}
		In this case, we have the upper bound in \eqref{def: complete refining}, i.e., $|\pi^n| \leq b|\pi^{n+1}|$ for every $n \in \mathbb{N}$.
		\item If $\pi$ is both balanced and complete refining, then it is finitely refining.
	\end{enumerate}
\end{lemma}

\subsection{Generalized Haar basis associated with a finitely refining partition sequence}

Let us fix a finitely refining sequence $\pi$ of partitions with vanishing mesh on $[0, T]$ and denote $p(n,k) := \inf \{j \geq 0 : t^{n+1}_j \geq t^n_k \}$. Since $\pi$ is refining, the following inequality holds for every $k = 0, \cdots, N(\pi^n)-1$
\begin{equation*}
	0\leq t^{n}_{k}= t^{n+1}_{p(n,k)}<t^{n+1}_{p(n,k)+1}<\cdots <t^{n+1}_{p(n,k+1)}= t^{n}_{k+1}\leq T.    
\end{equation*}
With the notation $\Delta^{n}_{i, j} := t^n_j - t^n_i$, we now define the generalized Haar basis associated with the refining sequence $\pi$.

\begin{definition} [Generalized Haar basis] \label{def. generalized Haar basis}
	The \textit{generalized Haar basis} associated with a finitely refining sequence $\pi=(\pi^n)_{n \in \mathbb{N}}$ of partitions is a collection of piecewise constant functions $\{\psi^{\pi}_{m,k,i} \, : \, m=0,1,\cdots, ~ k=0,\cdots,N(\pi^m)-1, ~ i = 1,\cdots, p(m,k+1)-p(m,k)\}$ defined as follows:
 \footnotesize
	\begin{equation*}
		\psi^{\pi}_{m,k,i}(t)= 
		\begin{cases}
			\qquad \qquad \qquad \qquad 0, &\quad\text{if } t\notin \left[t^{m+1}_{p(m,k)},t_{p(m,k)+i}^{m+1}\right)
			\\
			\quad \left( \frac{\Delta^{m+1}_{p(m, k)+i-1, p(m, k)+i}}{\Delta^{m+1}_{p(m, k), p(m, k)+i-1}} \times \frac{1}{\Delta^{m+1}_{p(m, k), p(m, k)+i}} \right)^{\frac{1}{2}}, &\quad\text{if } t\in\left[t_{p(m,k)}^{m+1},t_{p(m,k)+i-1}^{m+1}\right)
			\\
			-\left( \frac{\Delta^{m+1}_{p(m, k), p(m, k)+i-1}}{\Delta^{m+1}_{p(m, k)+i-1, p(m, k)+i}} \times \frac{1}{\Delta^{m+1}_{p(m, k), p(m, k)+i}}\right)^{\frac{1}{2}}, &\quad\text{if } t\in \left[t_{p(m,k)+i-1}^{m+1},t_{p(m,k)+i}^{m+1}\right)
		\end{cases}.
	\end{equation*}
\end{definition}

The function $\psi^{\pi}_{m, k, i}$ is constant on the intervals $[t_{p(m,k)}^{m+1},t_{p(m,k)+i-1}^{m+1})$ and $[t_{p(m,k)+i-1}^{m+1},t_{p(m,k)+i}^{m+1})$, zero elsewhere, and satisfies $\int \psi^{\pi}_{m, k, i}(t)dt = 0$ and $\int (\psi^{\pi}_{m, k, i}(t))^2 dt = 1$. Also, $t^{m+1}_{p(m,k)+i-1}\in \pi^{m+1}\backslash\pi^{m}$ for all $i$ and $t^{m+1}_{p(m,k)}=t^m_k \in \pi^m \cap \pi^{m+1}$. Since $\pi$ is finitely refining, we have $1 < p(m,k+1)-p(m,k)\leq M <\infty$ for all $m, k$.

\subsection{Schauder representation of a continuous function}
The Schauder functions $e^{\pi}_{m,k,i} : [0,T] \rightarrow \mathbb{R}$ are obtained by integrating the generalized Haar basis
\begin{equation*}
	e^{\pi}_{m,k,i}(t) := \int_0^t \psi_{m,k,i}(s)ds =\left(\int_{t^{m+1}_{p(m,k)}}^{t\wedge t^{m+1}_{p(m,k)+i}} \psi_{m,k,i}(s)ds\right) \mathbbm{1}_{[t^m_k,t^{m+1}_{p(m,k)+i}]}(t), 
\end{equation*}
where $\mathbbm{1}$ represents the indicator function.
\begin{definition}  [Schauder function]
	For every $m,k,i$, the function $e^\pi_{m,k,i}$ is continuous, but not differentiable, and expressed as
	\footnotesize
 \begin{align*}
			e^{\pi}_{m,k,i}(t) =
			\begin{cases}
				\qquad \qquad \qquad \qquad 0, &\quad\text{if } t\notin \left[t_{p(m,k)}^{m+1},t_{p(m,k)+i}^{m+1}\right) 
				\\
				\left( \frac{\Delta^{m+1}_{p(m, k)+i-1, p(m, k)+i}}{\Delta^{m+1}_{p(m, k), p(m, k)+i-1}} \times \frac{1}{\Delta^{m+1}_{p(m, k), p(m, k)+i}} \right)^{\frac{1}{2}}\times(t-t^{m+1}_{p(m,k)}), &\quad\text{if } t\in\left[t_{p(m,k)}^{m+1},t_{p(m,k)+i-1}^{m+1}\right)
				\\
				\left( \frac{\Delta^{m+1}_{p(m, k), p(m, k)+i-1}}{\Delta^{m+1}_{p(m, k)+i-1, p(m, k)+i}} \times \frac{1}{\Delta^{m+1}_{p(m, k), p(m, k)+i}}\right)^{\frac{1}{2}}\times(t^{m+1}_{p(m,k)+i}-t), &\quad\text{if } t\in \left[t_{p(m,k)+i-1}^{m+1},t_{p(m,k)+i}^{m+1}\right)
			\end{cases}.
	\end{align*}
\end{definition}
We note that the function $e^{\pi}_{m,k,i}$ has the following bound
\begin{align}
	\max_{t \in [0, T]} |e^{\pi}_{m, k, i}(t)| &= e^{\pi}_{m, k, i}(t_{p(m, k)+i-1})
	= \Bigg(\frac{\Delta^{m+1}_{p(m, k), p(m, k)+i-1}}{\Delta^{m+1}_{p(m, k), p(m, k)+i}} \Delta^{m+1}_{p(m, k)+i-1, p(m, k)+i} \bigg)^{\frac{1}{2}}    \nonumber
	\\
	&\le \big(  \Delta^{m+1}_{p(m, k)+i-1, p(m, k)+i} \big)^{\frac{1}{2}}
	\le |\pi^{m+1}|^{\frac{1}{2}},     \label{bound of maximum Schauder}
\end{align}
since the triangle-shaped $e^{\pi}_{m, k, i}$ peaks at $t_{p(m,k)+i-1}^{m+1}$, and $\Delta^{m+1}_{p(m, k), p(m, k)+i-1} \leq \Delta^{m+1}_{p(m, k), p(m, k)+i}$.

For any finitely refining sequence $\pi$ of partitions, two families of functions $\{\psi^{\pi}_{m,k,i}\}_{m,k,i}$ and $\{e^{\pi}_{m,k,i}\}_{m,k,i}$ can be reordered as $\{\psi^{\pi}_{m,k}\}_{m,k}$ and $\{e^{\pi}_{m,k}\}_{m,k}$; for each level $m\in \{0,1,\cdots\}$, the values of $k$ run from $0$ to $N(\pi^{m+1})-N(\pi^m)-1$ after reordering, thus we shall denote $I_m := \{ 0, 1, \cdots, N(\pi^{m+1})-N(\pi^{m})-1\}$ for every $m \ge 0$. We also note that for any fixed $m \geq 0$, at most $M$ many of $\{e^\pi_{m,k}(t)\}_{k \in I_m}$ have non-zero values, where the $M$ is the constant in Definition \ref{def.finite.refining}. Thus, we have the bound for any fixed $t \in [0, T]$
\begin{equation}    \label{ineq.boundofsumofe.pi}
	\sum_{k \in I_m} \big\vert e^{\pi}_{m,k}(t) \big\vert \leq M \vert \pi^{m+1} \vert^{\frac{1}{2}}.
\end{equation}

The following result shows that any continuous function with support $[0, T]$ admits a unique Schauder representation.

\begin{proposition} [Theorem 3.8 of \cite{das2021}] \label{prop:coeff_hat_func}
	Let $\pi$ be a finitely refining sequence of partitions of $[0,T]$. Then, any continuous function $x :[0,T] \rightarrow \mathbb{R}$ has a unique Schauder representation:
	\begin{equation*}
		x(t) = x(0) + \big(x(T)-x(0)\big)t+ \sum_{m=0}^{\infty} \sum_{k \in I_m} \theta^{x, \pi}_{m,k} e^{\pi}_{m,k}(t),
	\end{equation*}
	with a closed-form representation of the Schauder coefficient
	\begin{equation}  \label{eq.theta.coeff}
		\theta^{x, \pi}_{m,k} = \frac{\big(x(t^{m,k}_{2})-x(t^{m,k}_{1})\big)(t^{m,k}_{3}-t^{m,k}_{2})-\big(x(t^{m,k}_{3})- x(t^{m,k}_{2})\big)(t^{m,k}_{2}-t^{m,k}_1)}{\sqrt{(t^{m,k}_2-t^{m,k}_1)(t^{m,k}_3-t^{m,k}_2)(t^{m,k}_3-t^{m,k}_1)}},
	\end{equation}
	where the support of the Schauder function $e^\pi_{m,k}$ is denoted by $[t^{m,k}_1,t^{m,k}_3]$ and its maximum is attained at $t^{m,k}_2$ for every $m, k$.
\end{proposition}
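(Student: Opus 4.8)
The plan is to prove all three assertions (existence of the series, the closed form \eqref{eq.theta.coeff}, and uniqueness) at once, by combining the $L^2$-theory of the generalized Haar system with a piecewise-linear interpolation argument. Throughout, write $L_n x$ for the continuous function that interpolates $x$ affinely between consecutive nodes of $\pi^n$, write $\langle f,g\rangle := \int_0^T fg\,dt$ for the $L^2([0,T])$ inner product, and recall that $(L_n x)'$ is a step function that is constant on each interval of $\pi^n$.

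First I would show that $\{t\mapsto T^{-1/2}\}\cup\{\psi^{\pi}_{m,k}\}_{m,k}$ is a complete orthonormal system in $L^2([0,T])$. Unit norm and mean zero of each $\psi^{\pi}_{m,k}$ are already recorded. For orthogonality: two functions at the same level $m$ either have disjoint supports (distinct $k$), or, within a single level-$m$ interval (same $k$, distinct inner index $i<i'$), the wider one is constant on the support of the narrower one, so the pairing vanishes by mean zero; across levels $m'<m$, a level-$m'$ function is constant on each interval of $\pi^{m}$ (since $\pi^{m'+1}\subseteq\pi^{m}$) and hence on the whole support of the level-$m$ function, which again has mean zero. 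Completeness then follows from a telescoping dimension count: using $|I_m| = N(\pi^{m+1})-N(\pi^m)$, the collection $\{T^{-1/2}\}\cup\{\psi^{\pi}_{m,k}:0\le m\le n-1\}$ has cardinality $N(\pi^n)$ and is contained in the $N(\pi^n)$-dimensional space of functions constant on the intervals of $\pi^n$, hence spans it; since $|\pi^n|\to0$ these step functions are dense in $L^2([0,T])$.

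Next I would compute the coefficients. Fix $m,k$, let $[t^{m,k}_1,t^{m,k}_3]$ be the support of $e^{\pi}_{m,k}$ with peak at $t^{m,k}_2$, and abbreviate $t_j := t^{m,k}_j$. Reading the heights off \eqref{haar_basis}, $\psi^{\pi}_{m,k}$ equals the constant $h^+ = \big(\tfrac{t_3-t_2}{(t_2-t_1)(t_3-t_1)}\big)^{1/2}$ on $[t_1,t_2)$ and $-h^- = -\big(\tfrac{t_2-t_1}{(t_3-t_2)(t_3-t_1)}\big)^{1/2}$ on $[t_2,t_3)$. For $n\ge m+1$ the step function $(L_n x)'$ lies in the span of $\{T^{-1/2}\}\cup\{\psi^{\pi}_{m',k'}:m'\le n-1\}$, so its coefficient against $\psi^{\pi}_{m,k}$ is
\begin{equation*}
\big\langle (L_n x)',\,\psi^{\pi}_{m,k}\big\rangle = h^+\big(L_n x(t_2)-L_n x(t_1)\big) - h^-\big(L_n x(t_3)-L_n x(t_2)\big) = \theta^{x,\pi}_{m,k},
\end{equation*}
where the second equality uses $t_1,t_2,t_3\in\pi^{m+1}\subseteq\pi^n$ (so $L_n x(t_j)=x(t_j)$) together with the elementary algebra collapsing $h^+(x(t_2)-x(t_1))-h^-(x(t_3)-x(t_2))$ to \eqref{eq.theta.coeff}; crucially the value is independent of $n$. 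Expanding $(L_n x)'$ in the orthonormal system (its constant coefficient is $\tfrac{x(T)-x(0)}{T}$, since $\int_0^T(L_n x)'=x(T)-x(0)$) and integrating from $0$ to $t$, using $e^{\pi}_{m,k}=\int_0^\cdot\psi^{\pi}_{m,k}$ and $L_n x(0)=x(0)$, yields $L_n x(t) = x(0)+(x(T)-x(0))\tfrac{t}{T} + \sum_{m=0}^{n-1}\sum_{k\in I_m}\theta^{x,\pi}_{m,k}e^{\pi}_{m,k}(t)$, the affine part being exactly the interpolant $L_0 x$.

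Finally I would pass to the limit: because $x$ is uniformly continuous on the compact interval $[0,T]$ and $|\pi^n|\to0$ (Definition \ref{def.finite.refining}), the interpolants converge uniformly, $\|L_n x - x\|_\infty\to0$, so the partial sums converge uniformly to $x$ and the series representation holds with coefficients \eqref{eq.theta.coeff}. Uniqueness follows from the completeness and linear independence of the orthonormal system, since any admissible coefficient sequence is forced to equal the inner products computed above. The two load-bearing steps are (i) verifying orthonormality and completeness of the nonstandard cumulative Haar functions—especially orthogonality across the inner index $i$ and the telescoping dimension count—and (ii) the algebraic collapse of the $L^2$ pairing to the three-point formula \eqref{eq.theta.coeff} together with its independence of $n$; the convergence step is then routine.
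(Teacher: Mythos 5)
The paper does not actually prove this proposition; it is imported verbatim as Theorem 3.8 of \cite{das2021}, so there is no in-paper argument to compare against. Judged on its own terms, your proof of the \emph{existence} part and of the closed form \eqref{eq.theta.coeff} is correct and is the standard Ciesielski-style route one would expect the cited reference to follow: orthonormality of the generalized Haar functions (your case analysis over same level/same $k$, same level/nested inner index, and distinct levels is right), the telescoping dimension count $1+\sum_{m=0}^{n-1}\big(N(\pi^{m+1})-N(\pi^m)\big)=N(\pi^n)$ giving completeness, the exact finite expansion of $(L_nx)'$, the algebraic collapse of $h^+\big(x(t_2)-x(t_1)\big)-h^-\big(x(t_3)-x(t_2)\big)$ to \eqref{eq.theta.coeff}, and uniform convergence $L_nx\to x$ from uniform continuity. (Your affine term $(x(T)-x(0))\tfrac{t}{T}$ is the correct interpolant for general $T$; the statement's $(x(T)-x(0))t$ is only right after normalizing $T=1$.)

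The one genuine gap is the uniqueness step. You assert that uniqueness ``follows from the completeness and linear independence of the orthonormal system, since any admissible coefficient sequence is forced to equal the inner products computed above,'' but your inner products were taken against $(L_nx)'$, not against a derivative of $x$ (which need not exist), and a competing representation $x=x(0)+(x(T)-x(0))\tfrac{t}{T}+\sum_{m,k}c_{m,k}e^{\pi}_{m,k}$ converges uniformly, not in a norm in which you may differentiate termwise and pair against $\psi^{\pi}_{m,k}$. So $L^2$-completeness of the Haar system does not by itself pin down the $c_{m,k}$. The missing observation is pointwise: every $e^{\pi}_{m,k}$ with $m\ge n$ vanishes at every point of $\pi^{n}$ (its support sits inside one interval of $\pi^{m}\supseteq\pi^{n}$ and it vanishes at both endpoints of that support), so evaluating any admissible representation at the nodes of $\pi^n$ shows its $n$-th partial sum agrees with $x$ on $\pi^n$; being piecewise linear on $\pi^n$, that partial sum must equal $L_nx$, and then the finitely many level-$(n-1)$ coefficients are determined by $L_nx-L_{n-1}x$ via the linear independence you already established. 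With that one lemma inserted, your proof is complete.
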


Conversely, with the observation that a Schauder representation up to the $n$-th level, denoted by $x_n$ in \eqref{def.Schauder.n}, is a continuous function for every $n \ge 0$, the following result collects some conditions on the Schauder coefficients $\theta^{x, \pi}_{m, k}$ such that the sequence $(x_n)_{n \ge 0}$ converges to a continuous function as $n \rightarrow \infty$.

Here and in what follows, we shall use a fixed probability space $(\Omega, \mathcal{F}, \mathbb{P})$ on which the Schauder coefficients $\theta^{x, \pi}_{m, k}$ are defined, if they are given as random variables. Moreover, we shall write $\theta_{m, k}$, $\theta^{x}_{m, k}$, or $\theta^{\pi}_{m, k}$, instead of $\theta^{x, \pi}_{m, k}$, if the notation is clear from context.

\begin{lemma} [Continuous limit]   \label{lem: continuity condition}
	Consider the sequence $(x_n)_{n \ge 0}$ of Schauder representation up to level $n$
	\begin{equation}    \label{def.Schauder.n}
		x_n(t) := x_0(t)+\sum_{m=0}^{n} \sum_{k \in I_m} \theta_{m,k} e^{\pi}_{m,k}(t),
	\end{equation}
	where $x_0(t)$ is a linear function. If $\pi$ is balanced and complete refining, and the coefficients $\{\theta_{m,k}\}_{k \in I_m, m \ge 0}$ are random variables satisfying either one of the following conditions:
	\begin{enumerate} [label=(\roman*)]
		\item $\{\theta_{m,k}\}_{k \in I_m, m \ge 0}$ have a uniformly bounded fourth moment, i.e., $\mathbb{E}[\theta^4_{m, k}] \leq M_0 < \infty$ for all $k \in I_m$, $m \ge 0$,
		or
		\item there exist some positive constants $C_1, C_2$, and $\epsilon$ such that the inequality
		\begin{equation}    \label{con.theta bound delta}
			\mathbb{P} \Big[ \vert \theta_{m, k}\vert  \vert\pi^{m+1} \vert^{ \frac{1}{2}-\epsilon} > \delta  \Big]
			\le C_1 \exp \Big( -\frac{\delta^2}{2C_2} \Big)
		\end{equation}
		holds for every $\delta > 0$, $k \in I_m$, and $m \ge 0$, or
		\item there exist some constants $\epsilon >0$ and $C>0$ such that the bound $\vert \theta_{m, k} \vert \leq C \vert \pi^{m+1} \vert^{\epsilon-\frac{1}{2}}$ holds for every $k \in I_m$ and $m \ge 0$,
	\end{enumerate}
	then $x_n(t)$ converges uniformly in $t$ to a continuous function $x(t)$ almost surely.
\end{lemma}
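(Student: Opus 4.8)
The plan is to reduce everything to showing that the tail of the series is summable in the supremum norm. Each $x_n$ in \eqref{def.Schauder.n} is a finite sum of continuous Schauder functions (plus the linear $x_0$), hence continuous, and for fixed $m$ the block $\sum_{k\in I_m}\theta_{m,k}e^\pi_{m,k}$ is a finite sum over $k\in I_m$, so also continuous. Thus, by the Weierstrass M-test, it suffices to prove that the series of suprema $\sum_{m\ge0}\sup_{t}\big|\sum_{k\in I_m}\theta_{m,k}e^\pi_{m,k}(t)\big|$ is finite (almost surely in cases (i)--(ii), surely in (iii)); uniform convergence of $(x_n)$ to a continuous limit $x$ follows at once. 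Bounding $\sum_{k\in I_m}|\theta_{m,k}|\,|e^\pi_{m,k}(t)|\le\big(\max_{k\in I_m}|\theta_{m,k}|\big)\sum_{k\in I_m}|e^\pi_{m,k}(t)|$ and invoking \eqref{ineq.boundofsumofe.pi}, each level is controlled uniformly in $t$ by $M|\pi^{m+1}|^{1/2}\max_{k\in I_m}|\theta_{m,k}|$, so the whole task becomes proving $\sum_{m\ge0}|\pi^{m+1}|^{1/2}\max_{k\in I_m}|\theta_{m,k}|<\infty$. Two structural facts drive the estimates: by Lemma \ref{lemma.prop.refining}(i) the mesh $|\pi^{m+1}|$ decays geometrically, so $\sum_m|\pi^{m+1}|^{\beta}m^{p}$ converges for every $\beta>0$ and every $p$; and by \eqref{eq.wellbalanced} the number of coefficients at level $m$ satisfies the sharp bound $|I_m|\le N(\pi^{m+1})\le cT/|\pi^{m+1}|$, which is what lets us handle the maximum over $I_m$.

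Case (iii) is immediate and deterministic: substituting $|\theta_{m,k}|\le C|\pi^{m+1}|^{\epsilon-1/2}$ gives $|\pi^{m+1}|^{1/2}\max_{k\in I_m}|\theta_{m,k}|\le C|\pi^{m+1}|^{\epsilon}$, and geometric decay of the mesh makes $\sum_m|\pi^{m+1}|^{\epsilon}$ converge. For the two random cases I would control $\max_{k\in I_m}|\theta_{m,k}|$ by a union bound over the $|I_m|$ coefficients followed by the Borel--Cantelli lemma, choosing a level-dependent threshold so that the tail probabilities are summable while the resulting deterministic bound still leaves a convergent series.

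In case (ii), writing $Y_{m,k}:=|\theta_{m,k}|\,|\pi^{m+1}|^{1/2-\epsilon}$, the hypothesis \eqref{con.theta bound delta} is a uniform sub-Gaussian tail, so a union bound gives $\mathbb{P}[\max_{k\in I_m}Y_{m,k}>\delta]\le|I_m|\,C_1\exp(-\delta^2/(2C_2))$. Taking $\delta_m=A\sqrt{m}$ and using $|I_m|\le cT/|\pi^{m+1}|\le cT\,b^{m}/|\pi^1|$ (from the upper bound in \eqref{def: complete refining}), the probabilities are summable as soon as $A$ is large enough that $A^2/(2C_2)>\log b$; Borel--Cantelli then gives $\max_{k\in I_m}Y_{m,k}\le A\sqrt{m}$ for all large $m$, almost surely. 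Since $|\pi^{m+1}|^{1/2}\max_{k\in I_m}|\theta_{m,k}|=|\pi^{m+1}|^{\epsilon}\max_{k\in I_m}Y_{m,k}$, the target series is dominated by $A\sum_m|\pi^{m+1}|^{\epsilon}\sqrt{m}<\infty$.

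Case (i) is the main obstacle, since only a uniform fourth moment is available and no exponential concentration. Here I would use Markov's inequality, $\mathbb{P}[|\theta_{m,k}|>\lambda]\le M_0/\lambda^4$, together with a union bound to get $\mathbb{P}[\max_{k\in I_m}|\theta_{m,k}|>\lambda]\le|I_m|\,M_0/\lambda^4\le cTM_0/(|\pi^{m+1}|\lambda^4)$. The delicate point is the threshold: setting $\lambda_m=|\pi^{m+1}|^{-\alpha}$ makes the probabilities $\le cTM_0\,|\pi^{m+1}|^{4\alpha-1}$ summable exactly when $\alpha>1/4$, while the deterministic bound $|\pi^{m+1}|^{1/2}\lambda_m=|\pi^{m+1}|^{1/2-\alpha}$ remains summable exactly when $\alpha<1/2$. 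Any $\alpha\in(1/4,1/2)$ reconciles the two, and Borel--Cantelli yields $\max_{k\in I_m}|\theta_{m,k}|\le|\pi^{m+1}|^{-\alpha}$ for all large $m$, whence $\sum_m|\pi^{m+1}|^{1/2}\max_{k\in I_m}|\theta_{m,k}|<\infty$ almost surely. I emphasize that the window $(1/4,1/2)$ is nonempty only because of the sharp cardinality bound $|I_m|\le cT/|\pi^{m+1}|$ from \eqref{eq.wellbalanced}; the cruder branching bound $|I_m|\le M^{m+1}$ would instead force $\alpha>\frac{\log M}{4\log(1+a)}$, which need not lie below $1/2$, so recognizing and exploiting the balanced-cardinality estimate is the crux of the whole argument.
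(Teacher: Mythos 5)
Your proof is correct, and it takes a genuinely more self-contained route than the paper's. The paper handles (i) only by citation (Theorem 6.8 of \cite{das2021}), proves (ii) by a Borel--Cantelli argument on the increments $\Vert x_n-x_{n-1}\Vert_\infty$ with threshold $\vert\pi^{n+1}\vert^{\epsilon/4}$, and obtains (iii) by showing it implies (ii); you instead reduce all three cases uniformly, via the M-test and \eqref{ineq.boundofsumofe.pi}, to the single series $\sum_m \vert\pi^{m+1}\vert^{1/2}\max_{k\in I_m}\vert\theta_{m,k}\vert<\infty$, and then verify it case by case. For (ii) your threshold $\delta_m=A\sqrt{m}$ is interchangeable with the paper's choice --- both rest on the same union bound over $\vert I_m\vert\le N(\pi^{m+1})\le cT/\vert\pi^{m+1}\vert$ plus Borel--Cantelli --- and for (iii) your direct deterministic estimate is cleaner than the paper's reduction. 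The real added value is your proof of (i): Markov's inequality with the fourth moment, the threshold $\lambda_m=\vert\pi^{m+1}\vert^{-\alpha}$, and the observation that the window $\alpha\in(1/4,1/2)$ is nonempty precisely because of the cardinality bound from \eqref{eq.wellbalanced} rather than the crude branching bound $M^{m+1}$. That argument is sound (the exceptional finitely many levels contribute finitely since each level has finitely many a.s.\ finite coefficients) and fills in a step the paper leaves to an external reference. The only cosmetic caveat is that your side remark about the cruder bound forcing $\alpha>\frac{\log M}{4\log(1+a)}$ implicitly uses the geometric decay rate $\vert\pi^{m+1}\vert\le(1+a)^{-m}\vert\pi^1\vert$ from Lemma \ref{lemma.prop.refining}(i), which is exactly the rate available here, so the remark is accurate as stated.
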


\section{H\"older exponent and Schauder coefficients}   \label{sec. Holder and Schauder}

This section shows that the Schauder coefficients of a continuous function along a fixed sequence $\pi$ of partitions of $[0, T]$ are related to the H\"older exponent. We shall denote $C^{0}([0,T],\mathbb{R})$ the space of continuous functions defined on $[0, T]$, $C^{\alpha}([0,T],\mathbb{R})$ the space of $\alpha$-H\"older continuous functions for $0 < \alpha < 1$, i.e.,
\begin{equation*}
	C^{\alpha}([0,T],\mathbb{R}) := \bigg\{ x\in C^0([0,T], \mathbb{R}) ~ \bigg| ~ \sup_{\substack{t,s \in [0,T] \\ t\neq s }}\frac{\vert x(t)-x(s) \vert}{|t-s|^{\alpha}} < \infty  \bigg\},
\end{equation*}
and $C^{\nu-}([0,T],\mathbb{R}) := \cap_{0\leq \alpha< \nu}C^{\alpha}([0,T],\mathbb{R})$ the space of functions which are $\alpha$-H\"older continuous for every $\alpha<\nu$.
If $x\in C^{\alpha}([0,T],\mathbb{R})$, we denote the $\alpha$-H\"older (semi)-norm of $x$ as
\begin{equation}    \label{def.Holder norm}
	\Vert x \Vert_{C^{\alpha}([0, T])} := \sup_{\substack{t,s \in [0,T] \\ t\neq s }}\frac{\vert x(t)-x(s) \vert}{|t-s|^{\alpha}}.
\end{equation}

\begin{remark}
	In the above class $C^{\alpha}([0,T],\mathbb{R})$, the expression $\Vert x \Vert_{C^{\alpha}([0, T])}$ is not a norm but a semi-norm. In order to make it a norm we need to add an extra term, such as $\Vert x \Vert_{\infty} := \sup \{t\in[0,T]: |x(t)|\}$, or $|x(0)|$. Since this paper only considers continuous functions over a compact interval $[0,T]$, these terms are always bounded, and it does not affect the finiteness of either H\"older norm or H\"older semi-norm. Thus, we shall use the semi-norm instead of the H\"older norm throughout the paper. 
\end{remark}

We provide some preliminary results. The proofs of Lemmas \ref{lemma.sumability}, \ref{lem.difference.e} are given in Supplement A \cite{suppA}.

\begin{lemma}   \label{lemma.sumability}
	For any complete refining sequence $\pi$, we have for every $n_0 \in \mathbb{N}$ and $0 < \alpha < 1$
	\begin{equation*}
		\sum_{m=0}^{n_0} |\pi^{m+1}|^{\alpha-1} \le  K^{\alpha}_1 |\pi^{n_0+1}|^{\alpha-1} \qquad \text{and} \qquad \sum_{m=n_0}^\infty |\pi^{m+1}|^{\alpha} \le K^{\alpha}_2 |\pi^{n_0+1}|^{\alpha},
	\end{equation*}
	where $K^{\alpha}_1 := \frac{1}{1-(1+a)^{\alpha-1}}$ and $K^{\alpha}_2 := \frac{1}{1-(1+a)^{-\alpha}}$, with the constant $a$ from \eqref{def: complete refining}.
\end{lemma}

\begin{lemma}   \label{lem.difference.e}
	For any balanced and finitely refining partition sequence $\pi$ and any given two points $t \neq t' \in [0, T]$, let us consider a unique non-negative integer $n_0$ satisfying $\vert \pi^{n_0+1} \vert < \vert t-t'\vert \leq \vert \pi^{n_0} \vert$. Then, the sum of $m$-th level difference in Schauder functions $e^\pi$ at these two points has the following bounds with the constants from Definitions \ref{def.finite.refining} and \ref{def.balance}
	\begin{equation*}
		\sum_{k \in I_m} \big\vert e^{\pi}_{m,k}(t)- e^{\pi}_{m,k}(t') \big\vert \le
		\begin{cases} 
			2M \vert \pi^{m+1} \vert^{\frac{1}{2}},  & \text{ if } m \ge n_0,
			\\
			2M\sqrt{c} \vert \pi^{m+1} \vert^{-\frac{1}{2}}\vert t-t'\vert,   & \text{ if } m < n_0.
		\end{cases}
	\end{equation*}
\end{lemma}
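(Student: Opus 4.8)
The plan is to split the sum over $k \in I_m$ according to the two regimes $m \ge n_0$ and $m < n_0$, and to treat them with quite different tools: a crude triangle-inequality estimate for the coarse levels and a Lipschitz (mean-value) estimate for the fine levels, the point being that $m < n_0$ is exactly the range in which $|t-t'| \le |\pi^{m+1}|$, so the Lipschitz bound becomes productive.

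For $m \ge n_0$ I would simply write $|e^{\pi}_{m,k}(t)- e^{\pi}_{m,k}(t')| \le |e^{\pi}_{m,k}(t)| + |e^{\pi}_{m,k}(t')|$, sum over $k \in I_m$, and apply the already-established uniform bound \eqref{ineq.boundofsumofe.pi}, namely $\sum_{k \in I_m}|e^{\pi}_{m,k}(s)| \le M|\pi^{m+1}|^{\frac12}$, at each of $s=t$ and $s=t'$. This yields $2M|\pi^{m+1}|^{\frac12}$ and uses only finite refining, neither balancedness nor the precise value of $n_0$.

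For $m < n_0$ the key observation is that each $e^{\pi}_{m,k}$ is piecewise linear, so $|e^{\pi}_{m,k}(t)-e^{\pi}_{m,k}(t')| \le L_{m,k}|t-t'|$, where $L_{m,k}$ is the maximal slope magnitude; since $e^{\pi}_{m,k}$ is the primitive of the Haar function, these slopes are exactly the two Haar amplitudes in \eqref{haar_basis}. Abbreviating $p=p(m,k)$ and the two relevant lengths as $A = \Delta^{m+1}_{p,p+i-1}$ and $B=\Delta^{m+1}_{p+i-1,p+i}$ (here $B$ is a single level-$(m+1)$ interval and $A$ a union of such, so both are at least $\underline{\pi^{m+1}}$), I would bound $\sqrt{B/(A(A+B))} \le A^{-1/2} \le (\underline{\pi^{m+1}})^{-1/2}$ and likewise $\sqrt{A/(B(A+B))} \le B^{-1/2} \le (\underline{\pi^{m+1}})^{-1/2}$, giving $L_{m,k} \le (\underline{\pi^{m+1}})^{-1/2}$. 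The balanced condition \eqref{eq.balance}, in the form $\underline{\pi^{m+1}} \ge |\pi^{m+1}|/c$, then upgrades this to $L_{m,k} \le \sqrt{c}\,|\pi^{m+1}|^{-\frac12}$. To finish, I note that $e^{\pi}_{m,k}(t)-e^{\pi}_{m,k}(t')$ is nonzero only if $e^{\pi}_{m,k}$ fails to vanish at $t$ or at $t'$; by the ``at most $M$ nonzero'' property underlying \eqref{ineq.boundofsumofe.pi}, at most $M$ indices are active at $t$ and at most $M$ at $t'$, so at most $2M$ terms contribute, and multiplying produces $2M\sqrt{c}\,|\pi^{m+1}|^{-\frac12}|t-t'|$.

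The main obstacle I anticipate is the clean derivation of the slope bound $L_{m,k}\le\sqrt{c}\,|\pi^{m+1}|^{-\frac12}$: one must correctly identify the slopes of $e^{\pi}_{m,k}$ with the Haar amplitudes, verify that both amplitudes are controlled by $(\underline{\pi^{m+1}})^{-1/2}$ (being careful that $B$ is a single interval while $A$ may span several, so both genuinely exceed $\underline{\pi^{m+1}}$), and only then invoke balancedness to pass from $\underline{\pi^{m+1}}$ to $|\pi^{m+1}|$. By contrast, $n_0$ plays no quantitative role in either estimate; it merely records which of the two bounds is the sharp one in a given range of $m$.
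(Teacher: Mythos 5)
Your proposal is correct and follows essentially the same route as the paper: triangle inequality plus the bound \eqref{ineq.boundofsumofe.pi} (equivalently \eqref{bound of maximum Schauder}) for $m \ge n_0$, and for $m < n_0$ the piecewise-linearity of $e^{\pi}_{m,k}$ with slope equal to the Haar amplitudes, bounded by $(\underline{\pi^{m+1}})^{-1/2} \le \sqrt{c}\,|\pi^{m+1}|^{-1/2}$ via balancedness, times at most $2M$ active indices. Your explicit verification that both amplitudes $\sqrt{B/(A(A+B))}$ and $\sqrt{A/(B(A+B))}$ are controlled by $(\underline{\pi^{m+1}})^{-1/2}$ is slightly more detailed than the paper's one-line statement of the same slope bound, but the argument is identical in substance.
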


\subsection{Characterization of H\"older exponent}

Based on the previous bounds, we present a connection between the H\"older semi-norm and the Schauder coefficients. Though the following result only considers a scalar-valued continuous function $x$, we note here that it can be generalized to any vector-valued continuous function. For $x = (x_1, \cdots x_d)$ where each $x_i \in C^0([0, T], \mathbb{R})$ for $i = 1, \cdots, d$, we have the bounds \eqref{main.bound.theta} of $\sup_{m,k} (\vert \theta^{x_i}_{m,k} \vert \vert \pi^{m+1} \vert^{\frac{1}{2}-\alpha})$ in terms of $\Vert x_i \Vert_{C^{\alpha}([0, T])}$ for each $x_i$, and from these bounds, we can easily derive similar bounds for the coefficient vector $\theta^x_{m, k} = (\theta^x_{m, k, 1}, \cdots, \theta^x_{m, k, d})$.

\begin{theorem}[H\"older semi-norm and Schauder coefficients] \label{main.thm}
Let $\pi$ be a balanced and complete refining  sequence of partitions $[0,T]$ and $\{\theta_{m,k}\}$ be the Schauder coefficients along $\pi$ of $x \in C^0([0,T], \mathbb{R})$. Then, $x \in C^{\alpha}([0,T], \mathbb{R})$ if and only if $\sup_{m,k}(|\theta_{m,k}||\pi^{m+1}|^{\frac{1}{2}-\alpha})$ is finite. In this case, we have the bounds
\begin{equation}    \label{main.bound.theta}
	\frac{1}{2M\sqrt{c}K^{\alpha}_1 + 2M K^{\alpha}_2} \Vert x \Vert_{C^{\alpha}([0, T])} \le  \sup_{m,k} \Big( \vert \theta_{m,k} \vert \vert \pi^{m+1} \vert^{\frac{1}{2}-\alpha} \Big) \le 2(\sqrt{c})^{3} \Vert x \Vert_{C^{\alpha}([0, T])}.
\end{equation}
\end{theorem}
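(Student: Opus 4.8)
The plan is to prove the two inequalities in \eqref{main.bound.theta} separately; the claimed equivalence then follows at once, since each inequality forces one of the two quantities to be finite whenever the other is. Write $\Theta_\alpha := \sup_{m,k}\big(|\theta_{m,k}|\,|\pi^{m+1}|^{\frac12-\alpha}\big)$, and for the support $[t^{m,k}_1,t^{m,k}_3]$ of $e^\pi_{m,k}$ with peak $t^{m,k}_2$ abbreviate $\delta_1:=t^{m,k}_2-t^{m,k}_1$ and $\delta_2:=t^{m,k}_3-t^{m,k}_2$. From the construction $\delta_2$ is a single subinterval of $\pi^{m+1}$ while $\delta_1$ may be a union of consecutive ones, so in either case the balanced condition gives $\delta_1,\delta_2\ge\underline{\pi^{m+1}}\ge|\pi^{m+1}|/c$ together with $\delta_2\le|\pi^{m+1}|$; these one-sided bounds are all I will need.

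For the upper bound ($x\in C^\alpha\Rightarrow\Theta_\alpha<\infty$) I start from the closed form \eqref{eq.theta.coeff} and bound its numerator by $\Vert x\Vert_{C^\alpha}\big(\delta_1^\alpha\delta_2+\delta_2^\alpha\delta_1\big)$ via the increments $|x(t_2)-x(t_1)|\le\Vert x\Vert_{C^\alpha}\delta_1^\alpha$ and $|x(t_3)-x(t_2)|\le\Vert x\Vert_{C^\alpha}\delta_2^\alpha$. Dividing by $\sqrt{\delta_1\delta_2(\delta_1+\delta_2)}$ splits $|\theta_{m,k}|/\Vert x\Vert_{C^\alpha}$ into $\delta_1^{\alpha-\frac12}\sqrt{\delta_2/(\delta_1+\delta_2)}$ and $\delta_2^{\alpha-\frac12}\sqrt{\delta_1/(\delta_1+\delta_2)}$. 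The second summand is at most $\delta_2^{\alpha-\frac12}\le\sqrt c\,|\pi^{m+1}|^{\alpha-\frac12}$ by the two-sided control of $\delta_2$. The first summand needs care, because $\delta_1$ carries no clean upper bound; I instead estimate $\sqrt{\delta_2/(\delta_1+\delta_2)}\le\sqrt{\delta_2/\delta_1}$ to rewrite it as $\delta_1^{\alpha-1}\delta_2^{\frac12}$, and now exploit that the exponent $\alpha-1$ is negative, so only the lower bound $\delta_1\ge|\pi^{m+1}|/c$ is required, together with $\delta_2^{\frac12}\le|\pi^{m+1}|^{\frac12}$. Collecting powers of $c$ gives $|\theta_{m,k}|\,|\pi^{m+1}|^{\frac12-\alpha}\le(c+\sqrt c)\Vert x\Vert_{C^\alpha}\le 2(\sqrt c)^3\Vert x\Vert_{C^\alpha}$ uniformly in $(m,k)$.

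For the lower bound ($\Theta_\alpha<\infty\Rightarrow x\in C^\alpha$) I use the Schauder series of Proposition~\ref{prop:coeff_hat_func}, so that after subtracting the affine interpolant (whose Schauder coefficients vanish)
\[
	|x(t)-x(s)|\le\sum_{m=0}^\infty\sum_{k\in I_m}|\theta_{m,k}|\,\big|e^\pi_{m,k}(t)-e^\pi_{m,k}(s)\big|
\]
for $t\ne s$. I pick the unique $n_0$ with $|\pi^{n_0+1}|<|t-s|\le|\pi^{n_0}|$ (well defined since $|\pi^n|$ decays geometrically by \eqref{def: complete refining}), split the sum at level $n_0$, and insert $|\theta_{m,k}|\le\Theta_\alpha|\pi^{m+1}|^{\alpha-\frac12}$. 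For $m\ge n_0$, Lemma~\ref{lem.difference.e} bounds the inner sum by $2M|\pi^{m+1}|^{\frac12}$, contributing $2M\Theta_\alpha|\pi^{m+1}|^\alpha$; summing via the second estimate of Lemma~\ref{lemma.sumability} and using $|\pi^{n_0+1}|<|t-s|$ gives $2MK^\alpha_2\Theta_\alpha|t-s|^\alpha$. For $m<n_0$, Lemma~\ref{lem.difference.e} bounds the inner sum by $2M\sqrt c\,|\pi^{m+1}|^{-\frac12}|t-s|$, contributing $2M\sqrt c\,\Theta_\alpha|t-s|\,|\pi^{m+1}|^{\alpha-1}$; summing via the first estimate of Lemma~\ref{lemma.sumability} and using $|t-s|\le|\pi^{n_0}|$ gives $2M\sqrt c\,K^\alpha_1\Theta_\alpha|t-s|^\alpha$. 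Adding the two regimes produces exactly $\Vert x\Vert_{C^\alpha}\le(2M\sqrt c\,K^\alpha_1+2MK^\alpha_2)\Theta_\alpha$.

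I expect the one genuinely delicate point to be the first summand in the upper bound: $\delta_1$ can span several subintervals of $\pi^{m+1}$ and so admits no bound of the form $C|\pi^{m+1}|$ under the balanced condition alone, and the rearrangement into $\delta_1^{\alpha-1}\delta_2^{\frac12}$ — converting the troublesome power into the negative exponent $\alpha-1<0$ — is precisely what lets the lower bound on $\delta_1$ suffice while keeping the constant dependent on $c$ only. The reverse direction is computationally heavier but structurally routine once the splitting index $n_0$ is matched to $|t-s|$, since Lemmas~\ref{lem.difference.e} and \ref{lemma.sumability} are tailored to the two regimes and to the geometric summation. A minor bookkeeping point is the affine term $x(0)+(x(T)-x(0))t$, which lies in the kernel of the coefficient map (the numerator of \eqref{eq.theta.coeff} vanishes on affine functions); the series estimate is therefore carried out for $x$ minus its chord, and one records that the semi-norm bound is understood for this fluctuation part.
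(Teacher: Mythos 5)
Your proposal is correct and follows essentially the same route as the paper: the upper bound comes from the closed form \eqref{eq.theta.coeff} together with the balanced condition, and the lower bound from splitting the Schauder series at the level $n_0$ matched to $|t-s|$ and invoking Lemmas~\ref{lem.difference.e} and \ref{lemma.sumability}. You in fact supply two details the paper glosses over --- the elementary estimate $\delta_1^{\alpha-1}\delta_2^{1/2}+\delta_2^{\alpha-1/2}\le 2(\sqrt c)^3|\pi^{m+1}|^{\alpha-1/2}$ behind the constant in the second inequality of \eqref{main.bound.theta}, and the fact that the series estimate applies to $x$ minus its chord since the affine part lies in the kernel of the coefficient map --- both of which are accurate.
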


\begin{proof}
From Lemma \ref{lemma.prop.refining} (iii), we first note that $\pi$ is also finitely refining. Recalling the representation \eqref{eq.theta.coeff} of $\theta_{m, k}$ in Proposition \ref{prop:coeff_hat_func} for any $m \ge 0$ and $k \in I_m$, $|\theta_{m,k}|$ has the following upper bound from the H\"older continuity of $x$ with the constant $c$ in Definition \ref{def.balance}
\begin{align*}
	\vert \theta_{m,k} \vert
	&\leq \Vert x \Vert_{C^{\alpha}([0, T])} \frac{ (t^{m,k}_{2}-t^{m,k}_{1})^{\alpha} (t^{m,k}_{3}-t^{m,k}_{2}) + (t^{m,k}_{3}- t^{m,k}_{2})^{\alpha} (t^{m,k}_{2}-t^{m,k}_1)}{\sqrt{(t^{m,k}_2-t^{m,k}_1)(t^{m,k}_3-t^{m,k}_2)(t^{m,k}_3-t^{m,k}_1)}}
	\\
	&\leq 2(\sqrt{c})^{3} \Vert x \Vert_{C^{\alpha}([0, T])} \vert \pi^{m+1} \vert^{\alpha-\frac{1}{2}}.
\end{align*}
Thus, we obtain $\vert \theta_{m,k} \vert \vert \pi^{m+1} \vert^{\frac{1}{2}-\alpha} \le 2(\sqrt{c})^{3}\Vert x \Vert_{C^{\alpha}([0, T])}$. Since the right-hand side is independent of $m$ and $k$, taking the supremum over $m, k$ in both sides to obtain the second inequality of \eqref{main.bound.theta}.

In order to show the first inequality, take two points $t\neq t' \in [0,T]$ and derive
\begin{align*}
	\frac{|x(t)-x(t')|}{|t-t'|^\alpha} &= \bigg\vert \sum_{m=0}^{\infty} \sum_{k \in I_m} \theta_{m,k} \frac{e^{\pi}_{m,k}(t) - e^{\pi}_{m,k}(t')}{|t-t'|^\alpha} \bigg\vert          \nonumber
	\\
	& \leq \sup_{m,k} \Big( \vert \theta_{m,k} \vert \vert \pi^{m+1} \vert^{\frac{1}{2}-\alpha} \Big) \sum_{m=0}^\infty\sum_{k \in I_m} \big\vert e^{\pi}_{m,k}(t) - e^{\pi}_{m,k}(t')\big\vert \frac{\vert \pi^{m+1} \vert^{\alpha-\frac{1}{2}}}{|t-t'|^\alpha}.
\end{align*}
From Lemma~\ref{lem.difference.e} and \ref{lemma.sumability}, the double summation can be bounded as
\begin{align*}
	\sum_{m=0}^\infty & \sum_{k \in I_m} \big\vert e^{\pi}_{m,k}(t) - e^{\pi}_{m,k}(t')\big\vert \frac{\vert \pi^{m+1} \vert^{\alpha-\frac{1}{2}}}{|t-t'|^\alpha}
	\\
	&\le 2M\sqrt{c} \sum_{m=0}^{n_0-1} |\pi^{m+1}|^{\alpha-1} |t-t'|^{1-\alpha} + 2M \sum_{m=n_0}^\infty |\pi^{m+1}|^{\alpha}|t-t'|^{-\alpha}
	\\
	& \le 2M\sqrt{c} K^{\alpha}_1 \vert \pi^{n_0} \vert^{\alpha-1} \vert t-t' \vert^{1-\alpha} + 2M K^{\alpha}_2 \vert \pi^{n_0+1} \vert^{\alpha} \vert t-t' \vert^{-\alpha}
	\le 2M\sqrt{c} K^{\alpha}_1 + 2M K^{\alpha}_2.
\end{align*}
Here, the last inequality follows from the relationship $\vert \pi^{n_0+1} \vert < \vert t-t'\vert \leq \vert \pi^{n_0} \vert$. Therefore, we have
\begin{equation*}
	\frac{|x(t)-x(t')|}{|t-t'|^\alpha} \le (2M\sqrt{c} K^{\alpha}_1 + 2M K^{\alpha}_2) \sup_{m,k} \Big( \vert \theta_{m,k} \vert \vert \pi^{m+1} \vert^{\frac{1}{2}-\alpha} \Big),
\end{equation*}
and the right-hand side is independent of $t$ and $t'$. Taking the supremum over all $t\neq t' \in [0, T]$ proves the first inequality.
\end{proof}

\begin{remark}
Ciesielski \cite{Ciesielski:isomorphism} stated the isomorphism between the space $C^{\alpha}([0, T], \mathbb{R})$ of $\alpha$-H\"older continuous functions and the space $\ell^{\infty}(\mathbb{R})$ of all bounded real sequences, equipped with the semi-norm $\Vert \cdot \Vert_{C^{\alpha}([0, T])}$ in \eqref{def.Holder norm} and $\Vert \xi \Vert := \sup_n \vert \xi_n \vert$ for $\xi = (\xi_1, \xi_2, \cdots )$. However, this isomorphism only considers the dyadic partition sequence on the interval $[0, T]$; when a different sequence $\pi$ of partitions is used (as long as it is balanced and complete refining), a given element of $\ell^{\infty}(\mathbb{R})$ generates a different $\alpha$-H\"older continuous function than the one constructed along the dyadic sequence. Therefore, Theorem \ref{main.thm} gives rise to the precise statement: there is a bijection between the space $C^{\alpha}([0, T], \mathbb{R})$ and the product space $\Pi_{b, c}([0, T]) \times \ell^{\infty}(\mathbb{R})$, where $\Pi_{b, c}([0, T])$ denotes the set of all balanced and complete refining sequences of partitions of $[0, T]$.
\end{remark}

\begin{corollary}   \label{cor. Holder continuity}
For a balanced and complete refining partition sequence $\pi$ on $[0,T]$, let $\{ \theta_{m,k} \}$ be the Schauder coefficients along $\pi$ of $x \in C^0([0,T], \mathbb{R})$ satisfying condition (ii) of Lemma \ref{lem: continuity condition}. Then, $x \in C^{\alpha}([0,T], \mathbb{R})$ for every $\alpha < \epsilon$.
\end{corollary}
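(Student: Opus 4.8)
The plan is to reduce the statement to the characterization of the H\"older semi-norm in Theorem \ref{main.thm}, so that all that remains is to verify that the relevant supremum of rescaled Schauder coefficients is almost surely finite. Concretely, since $\pi$ is balanced and complete refining, Theorem \ref{main.thm} tells us that $x \in C^{\alpha}([0,T],\mathbb{R})$ holds if and only if $\sup_{m,k}(|\theta_{m,k}| |\pi^{m+1}|^{\frac{1}{2}-\alpha})$ is finite. Hence it suffices to fix an arbitrary $\alpha < \epsilon$ (with $0 < \alpha < 1$) and show that this supremum is almost surely finite; the claim then follows by transferring back through the equivalence in Theorem \ref{main.thm}.

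To control the supremum I would apply the Borel--Cantelli lemma to the events $A_{m,k} := \{ |\theta_{m,k}| |\pi^{m+1}|^{\frac{1}{2}-\alpha} > 1 \}$. The key observation is to factor $|\pi^{m+1}|^{\frac{1}{2}-\alpha} = |\pi^{m+1}|^{\frac{1}{2}-\epsilon} |\pi^{m+1}|^{\epsilon-\alpha}$ and to exploit the strict gap $\epsilon - \alpha > 0$. Since $|\pi^{m+1}| \to 0$ (in fact exponentially, by Lemma \ref{lemma.prop.refining}(i)), rewriting $A_{m,k}$ as $\{ |\theta_{m,k}| |\pi^{m+1}|^{\frac{1}{2}-\epsilon} > |\pi^{m+1}|^{-(\epsilon-\alpha)} \}$ allows me to invoke the sub-Gaussian tail bound \eqref{con.theta bound delta} of condition (ii) with the diverging threshold $\delta = |\pi^{m+1}|^{-(\epsilon-\alpha)}$, yielding $\mathbb{P}[A_{m,k}] \le C_1 \exp\big( -|\pi^{m+1}|^{-2(\epsilon-\alpha)} / (2C_2) \big)$.

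Summing over $k \in I_m$ and $m \ge 0$, I would bound $|I_m| \le N(\pi^{m+1}) \le cT / |\pi^{m+1}|$ using the balanced estimate \eqref{eq.wellbalanced}, so that $\sum_{m,k} \mathbb{P}[A_{m,k}] \le \sum_{m} (cTC_1 / |\pi^{m+1}|) \exp\big( -|\pi^{m+1}|^{-2(\epsilon-\alpha)} / (2C_2) \big)$. Because $|\pi^{m+1}|$ decays geometrically, the exponential factor decays doubly exponentially in $m$ and dominates the polynomial prefactor $1/|\pi^{m+1}|$, so the series converges. By Borel--Cantelli, almost surely only finitely many $A_{m,k}$ occur; since each $\theta_{m,k}$ is almost surely finite (letting $\delta \to \infty$ in \eqref{con.theta bound delta}), the supremum $\sup_{m,k}(|\theta_{m,k}| |\pi^{m+1}|^{\frac{1}{2}-\alpha})$ is finite almost surely, and Theorem \ref{main.thm} then gives $x \in C^{\alpha}([0,T],\mathbb{R})$ almost surely.

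Finally, to upgrade ``for each fixed $\alpha < \epsilon$'' to ``for every $\alpha < \epsilon$'' on a single almost-sure event, I would take a sequence $\alpha_j \uparrow \epsilon$ and intersect the countably many almost-sure events; the nesting $C^{\alpha'}([0,T],\mathbb{R}) \subseteq C^{\alpha}([0,T],\mathbb{R})$ for $\alpha \le \alpha'$ then yields $x \in C^{\alpha}$ for all $\alpha < \epsilon$. The only genuine subtlety, and the step I would treat most carefully, is the bookkeeping of exponents: condition (ii) supplies Gaussian-type control only at the fixed critical scale $\frac{1}{2}-\epsilon$, so the whole argument hinges on spending the slack $\epsilon - \alpha > 0$ to turn that single-scale tail into a summable series. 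Once Theorem \ref{main.thm} is in hand, everything else is routine.
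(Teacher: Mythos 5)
Your proposal is correct and takes essentially the same route as the paper's proof: both rewrite the event at scale $\frac{1}{2}-\alpha$ as a tail event at the critical scale $\frac{1}{2}-\epsilon$ with the diverging threshold $\delta = \vert \pi^{m+1} \vert^{-(\epsilon-\alpha)}$, bound $\vert I_m \vert \le N(\pi^{m+1}) \le cT/\vert \pi^{m+1} \vert$ via \eqref{eq.wellbalanced}, apply the Borel--Cantelli lemma to conclude the supremum $\sup_{m,k}\big(\vert \theta_{m,k} \vert \vert \pi^{m+1} \vert^{\frac{1}{2}-\alpha}\big)$ is almost surely finite, and then invoke Theorem~\ref{main.thm}. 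Your explicit countable intersection over $\alpha_j \uparrow \epsilon$ to get a single almost-sure event is a small refinement that the paper leaves implicit.
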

The proof of Corollary \ref{cor. Holder continuity} can be found in Supplement A \cite{suppA}.

\subsection{Pathwise H\"older regularity estimator}

The global H\"older regularity of a given function can be represented in terms of its wavelet basis coefficients, as in the following.

\begin{proposition} [Proposition 5.1 of \cite{Echelard}, \cite{Jaffard04}]
Let $\{\psi_{m,k}\}$ be an orthonormal wavelet basis having enough zero moments (\cite[conditions (2.9)-(2.15)]{Echelard}). If the global H\"older regularity of a continuous function $f \in C^0([0,T],\mathbb{R})$ is given by $\alpha_0(f)$, then
\begin{equation}\label{eq.Holder.estimate.old}
	\alpha_0(f) + \frac{1}{2} = \liminf_m \min_k \frac{\log_2|\langle f,\psi_{m,k}\rangle|}{-m}, 
\end{equation} 
where $\langle f,\psi_{m,k}\rangle$ is the projection of $f$ along $\psi_{m,k}$, i.e., $\langle f,\psi_{m,k}\rangle = \int_\mathbb{R}f(u)\psi_{m,k}(u)du$.
\end{proposition}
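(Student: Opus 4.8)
The plan is to deduce the estimator from the two-sided \emph{wavelet characterization} of H\"older regularity: for $0<\alpha<1$ one has $f\in C^{\alpha}([0,T],\mathbb{R})$ if and only if $\sup_{m,k}\big(2^{m(\alpha+\frac12)}|\langle f,\psi_{m,k}\rangle|\big)<\infty$, after which \eqref{eq.Holder.estimate.old} follows by taking logarithms. Writing $c_{m,k}:=\langle f,\psi_{m,k}\rangle$ and $\gamma:=-\tfrac12+\liminf_{m}\min_{k}\frac{\log_2|c_{m,k}|}{-m}$, the target identity $\alpha_0(f)=\gamma$ splits into the two inequalities $\gamma\ge\alpha_0(f)$ and $\alpha_0(f)\ge\gamma$. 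Throughout I use the explicit dyadic Haar function: $\psi_{m,k}$ is $L^2$-normalised, supported on a dyadic interval of length $\asymp 2^{-m}$, takes two values $\pm 2^{m/2}$ on its two halves, and satisfies $\int\psi_{m,k}=0$ and $\|\psi_{m,k}\|_{L^1}\asymp 2^{-m/2}$. Since the Haar system has a single vanishing moment, it only detects regularity below $1$, so I work under the standing normalisation $0<\alpha_0(f)<1$ (for smoother $f$ the right-hand side of \eqref{eq.Holder.estimate.old} saturates at $1$).

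For the first inequality (analysis direction) I would fix $\alpha<\alpha_0(f)$, so that $f\in C^{\alpha}$, and exploit the vanishing moment. Choosing any $u_0$ in the support of $\psi_{m,k}$ and writing $c_{m,k}=\int\psi_{m,k}(u)\big(f(u)-f(u_0)\big)\,du$, the H\"older bound $|f(u)-f(u_0)|\le\|f\|_{C^{\alpha}([0,T])}\,2^{-m\alpha}$ on the support together with $\|\psi_{m,k}\|_{L^1}\asymp 2^{-m/2}$ gives
\begin{equation*}
	|c_{m,k}|\le C\,\|f\|_{C^{\alpha}([0,T])}\,2^{-m(\alpha+\frac12)}\qquad\text{for every }m,k,
\end{equation*}
uniformly in $k$. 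Taking $\log_2$, dividing by $-m$, and then $\min_k$ and $\liminf_m$ yields $\liminf_m\min_k\frac{\log_2|c_{m,k}|}{-m}\ge\alpha+\tfrac12$, i.e. $\gamma\ge\alpha$; letting $\alpha\uparrow\alpha_0(f)$ gives $\gamma\ge\alpha_0(f)$. This step is routine and is the exact analogue of the upper bound in \eqref{main.bound.theta}.

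For the converse (synthesis direction) I would fix $\alpha<\gamma$, so that $|c_{m,k}|\le 2^{-m(\alpha+\frac12)}$ for all $k$ and all large $m$ (the finitely many remaining scales contribute a bounded, smooth term), and reconstruct $f$ from its Haar series $f=\sum_{m}g_m+\text{(const)}$, $g_m:=\sum_k c_{m,k}\psi_{m,k}$, whose partial sums are the dyadic conditional expectations of the continuous function $f$ and hence converge uniformly. To estimate $|f(t)-f(s)|$ I would pick $m_0$ with $2^{-m_0-1}<|t-s|\le 2^{-m_0}$ and split the series at $m_0$. The fine scales $m\ge m_0$ are controlled by the triangle inequality, $|g_m(t)-g_m(s)|\le 2\max_k|c_{m,k}|\,2^{m/2}\le 2\,C\,2^{-m\alpha}$, which sums to $\lesssim 2^{-m_0\alpha}\asymp|t-s|^{\alpha}$.

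The hard part is the coarse range $m<m_0$. Here $g_m$ is a step function with jumps, so the crude termwise bound $|g_m(t)-g_m(s)|\le 2C2^{-m\alpha}$ is of the wrong order: summed over $m<m_0$ it is only $O(1)$ rather than $O(|t-s|^{\alpha})$, and it fails precisely when $t$ and $s$ straddle a coarse dyadic point. The resolution, which is the technical heart of the statement, is to exploit the cancellation between the finitely many adjacent coarse details surrounding that dyadic point (equivalently, to compare the two neighbouring dyadic block-averages of $f$): a summation-by-parts pairing $c_{m,k}$ with its neighbour, together with the continuity of $f$ that forces the accumulated coarse jumps to cancel, recovers $|A_{m_0}f(t)-A_{m_0}f(s)|\lesssim|t-s|^{\alpha}$ for the coarse partial sum $A_{m_0}f=\sum_{m<m_0}g_m$. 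Since this is exactly the classical wavelet/Besov characterisation $C^{\alpha}=B^{\alpha}_{\infty,\infty}$ for $0<\alpha<1$, I would either carry out this pairing explicitly or invoke \cite{Jaffard04}. Combining the fine and coarse estimates gives $f\in C^{\alpha}$, hence $\alpha_0(f)\ge\alpha$; letting $\alpha\uparrow\gamma$ yields $\alpha_0(f)\ge\gamma$, and together with the first inequality this proves \eqref{eq.Holder.estimate.old}. The main obstacle is thus entirely in the coarse-scale synthesis: unlike the continuous, piecewise-linear Schauder functions $e^{\pi}_{m,k}$ used in Theorem \ref{main.thm}, for which Lemma \ref{lem.difference.e} furnishes a clean slope bound and no cancellation is needed, the discontinuity of the Haar functions forces one to track cancellations across scales.
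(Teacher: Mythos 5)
First, a point of reference: the paper does not prove this proposition at all --- it is quoted from \cite{Jaffard04} and Proposition 5.1 of \cite{Echelard} --- so there is no in-paper argument to compare yours against. Your analysis direction (the vanishing-moment bound $|c_{m,k}|\le C\Vert f\Vert_{C^{\alpha}}2^{-m(\alpha+\frac{1}{2})}$, hence $\gamma\ge\alpha_0(f)$ after taking logarithms) is correct and complete, and your normalisation $0<\alpha_0(f)<1$ is a caveat the paper should arguably make explicit.

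The gap is in the synthesis direction, exactly where you place it, but neither of your proposed resolutions closes it as written. Invoking the classical characterisation $C^{\alpha}=B^{\alpha}_{\infty,\infty}$ from \cite{Jaffard04} is not legitimate here: that theorem requires wavelets of regularity exceeding $\alpha$, and the Haar system does \emph{not} characterise $C^{\alpha}$ for $\alpha>0$. Indeed the ``if'' half of your opening equivalence is false without the continuity hypothesis: the uniformly convergent series $\sum_{m}2^{-m(\alpha+\frac{1}{2})}\psi_{m,0}$ has Haar coefficients of exactly the critical size, yet its limit has a genuine jump at each point $2^{-m-1}$ (the jump $-2\cdot2^{-m\alpha}$ contributed by the $m$-th term and the jump $+2^{-(m+1)\alpha}$ contributed by the $(m+1)$-th term do not cancel). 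So any proof must use the standing continuity of $f$ in an essential way; your phrase ``continuity forces the accumulated coarse jumps to cancel'' is the right idea, but the ``summation-by-parts pairing of $c_{m,k}$ with its neighbour'' is neither spelled out nor the actual mechanism. The step closes more simply than you suggest, by a \emph{tail} estimate rather than coarse-scale cancellation: since $f$ is continuous, the dyadic averages $A_Jf$ converge to $f$ uniformly, whence $\Vert f-A_{m_0}f\Vert_{\infty}\le\sum_{j\ge m_0}\max_k|c_{j,k}|\,2^{j/2}\lesssim 2^{-m_0\alpha}$. Now if $s<t$ with $2^{-m_0-1}<t-s\le2^{-m_0}$ lie in adjacent generation-$m_0$ dyadic intervals $I\ni s$ and $I'\ni t$ with common endpoint $u$ (the same-interval case being trivial), then $A_{m_0}f(t)-A_{m_0}f(s)=\operatorname{avg}_{I'}f-\operatorname{avg}_{I}f$, and approximating $u$ from the interior of each interval and using continuity of $f$ at $u$ gives $|\operatorname{avg}_{I'}f-f(u)|\le\Vert f-A_{m_0}f\Vert_{\infty}$ and $|f(u)-\operatorname{avg}_{I}f|\le\Vert f-A_{m_0}f\Vert_{\infty}$, hence $|A_{m_0}f(t)-A_{m_0}f(s)|\lesssim2^{-m_0\alpha}\asymp|t-s|^{\alpha}$. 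Combined with your (correct) fine-scale estimate this yields $f\in C^{\alpha}$, so $\alpha_0(f)\ge\gamma$; with this replacement the proof is complete.
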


Even though the identity \eqref{eq.Holder.estimate.old} is well-known and widely used for estimating the global H\"older regularity of a given function, the estimator constructed from \eqref{eq.Holder.estimate.old}, by taking up to finite terms on the right-hand side, has some disadvantages. For example, in a practical situation one does not observe the entire continuous function $f:[0,T] \rightarrow \mathbb{R}$, but observes high-frequency samples from the function $f$. Then, the projections $ \langle f,\psi_{m,k}\rangle$ of $f$ cannot be calculated directly but need to be estimated from the sample points:
\begin{equation*}
\langle f,\psi_{m,k}\rangle \approx \sum_{0 \leq t_i < T} f(t_i)\psi_{m,k}(t_i)(t_{i+1}-t_{i}).
\end{equation*}
Additionally, often time financial data are not observed over uniform time intervals. Since the above estimation of the H\"older exponent explicitly uses the Haar basis along the dyadic partitions, the observations need to be along the uniform time window. 

From Theorem \ref{main.thm}, we provide a very similar characterization of global H\"older regularity; whenever one constructs an estimator from our characterization, our estimator does not require any additional approximation for computing the projection $\langle f,\psi_{m,k}\rangle$ of $f$, and it enables us to estimate the H\"older exponent for given data observed over non-uniform time intervals.

\begin{theorem}[Characterization of H\"older regularity] \label{thm.holder.estimate}
Let us fix a balanced and complete refining (not necessarily uniform) partition sequence $\pi$ of $[0, T]$. If the global H\"older regularity of a continuous function $f$ is $\alpha_0(f)$, then
\begin{equation}    \label{holder.estimate.new}
	\alpha_0(f) - \frac{1}{2} = 
	\begin{cases}
		~~\limsup_m \max_k \frac{\log|\theta_{m,k}^{f,\pi}|}{\log|\pi^{m+1}|},  & \text{if } \limsup_m \max_{k\in I_m}\log|\theta_{m,k}^{f,\pi}| = -\infty,
		\\
		\qquad \qquad 0, & \text{if } \limsup_m \max_{k\in I_m}\log|\theta_{m,k}^{f,\pi}| = L \in (-\infty, \infty),
		\\
		- \limsup_m \max_k \frac{\log|\theta_{m,k}^{f,\pi}|}{-\log|\pi^{m+1}|}, & \text{if } \limsup_m \max_{k\in I_m}\log|\theta_{m,k}^{f,\pi}| = +\infty.
	\end{cases}        
\end{equation}
where $\theta_{m,k}^{f,\pi}$ is the unique Schauder coefficient of $f$ along $\pi$, given in \eqref{eq.theta.coeff}.
\end{theorem}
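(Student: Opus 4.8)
The plan is to deduce everything from the characterization in Theorem \ref{main.thm}, which turns the H\"older problem into a statement about the decay rate of the level-maxima $\max_{k\in I_m}|\theta^{f,\pi}_{m,k}|$ relative to the mesh $|\pi^{m+1}|$. I take the global H\"older regularity to be $\alpha_0(f)=\sup\{\alpha\in(0,1): f\in C^{\alpha}([0,1],\mathbb{R})\}$. By Theorem \ref{main.thm}, for each fixed $\alpha$ we have $f\in C^{\alpha}$ if and only if $Q(\alpha):=\sup_{m,k}\big(|\theta^{f,\pi}_{m,k}|\,|\pi^{m+1}|^{\frac{1}{2}-\alpha}\big)<\infty$. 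Since $|\pi^{m+1}|\le|\pi^1|\le 1$, the exponent $\frac{1}{2}-\alpha$ decreasing in $\alpha$ makes $\alpha\mapsto Q(\alpha)$ non-decreasing, so $\{\alpha:Q(\alpha)<\infty\}$ is an interval and $\alpha_0(f)$ is precisely its right endpoint. Thus the whole theorem reduces to locating the threshold at which $Q(\alpha)$ passes from finite to infinite.

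First I would pass to logarithms. Writing $L_m:=\max_{k\in I_m}\log|\theta^{f,\pi}_{m,k}|$ and $P_m:=\log|\pi^{m+1}|$, finiteness of $Q(\alpha)$ is equivalent to $\sup_m\big(L_m+(\tfrac{1}{2}-\alpha)P_m\big)<\infty$. Because $\pi$ is complete refining, Lemma \ref{lemma.prop.refining}(i) gives that $|\pi^{m+1}|$ decays exponentially, so $P_m\to-\infty$ (indeed $P_m$ is comparable to $-m$); together with the fact that each individual level term is finite, the supremum over all $m$ is finite exactly when the corresponding $\limsup_m$ is finite. This replaces the global supremum by a pure tail condition, which is what makes a clean rate formula possible.

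Next I would extract the critical exponent. Factoring $L_m+(\tfrac{1}{2}-\alpha)P_m=|P_m|\big(\frac{L_m}{|P_m|}+(\alpha-\tfrac{1}{2})\big)$ and using $|P_m|\to\infty$, the $\limsup$ is finite precisely when $\limsup_m\big(\frac{L_m}{|P_m|}+(\alpha-\tfrac{1}{2})\big)\le 0$, i.e. when $\alpha\le\tfrac{1}{2}-\limsup_m\frac{L_m}{|P_m|}$. Taking the supremum over admissible $\alpha$ yields the single unified identity $\alpha_0(f)-\tfrac{1}{2}=-\limsup_m\frac{\max_{k}\log|\theta^{f,\pi}_{m,k}|}{-\log|\pi^{m+1}|}$, which I would then specialize to the three stated forms according to the behaviour of $\limsup_m L_m$: when $L_m\to-\infty$ both numerator and denominator are negative and one rewrites the ratio with the $\log|\pi^{m+1}|$ denominator of the first case; when $\limsup_m L_m=L$ is finite the bound $L_m/|P_m|\to 0$ forces $\limsup_m L_m/|P_m|=0$ and hence $\alpha_0(f)=\tfrac{1}{2}$; and when $\limsup_m L_m=+\infty$ the expression is already in the third form.

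The main obstacle I anticipate is the sign bookkeeping rather than the analysis: because $\log|\pi^{m+1}|<0$, moving $\max_k$ across the division flips it and interchanges $\limsup$ with $\liminf$, so in each case the extremum inside the ratio and the outer limit must be matched to the sign of the denominator, and the rewriting relies on the elementary identity $\liminf_m(-x_m)=-\limsup_m x_m$ applied with care. A secondary point is the degenerate boundary behaviour at the critical $\alpha$ itself, where $\limsup_m\big(\frac{L_m}{|P_m|}+(\alpha-\tfrac{1}{2})\big)=0$; this borderline case does not affect the value of the supremum defining $\alpha_0(f)$, so it can be dispatched briefly, but it is worth isolating the finite middle case separately, exactly as the theorem does, since there the rate vanishes identically.
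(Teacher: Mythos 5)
Your overall strategy is sound and in fact takes a more careful route than the paper's own proof. The paper begins by evaluating Theorem \ref{main.thm} \emph{at} $\alpha=\alpha_0(f)$, asserting $\sup_{m,k}\big(|\theta^{f,\pi}_{m,k}||\pi^{m+1}|^{\frac12-\alpha_0(f)}\big)=C_0\in(0,\infty)$, and then manipulates the resulting log-identity case by case; this starting point is delicate because at the critical exponent the supremum may be $0$ or $+\infty$ (a Brownian path is not $\tfrac12$-H\"older). You instead characterize $\alpha_0(f)$ as the right endpoint of the interval $\{\alpha: Q(\alpha)<\infty\}$, prove $Q(\alpha)<\infty$ strictly below the candidate threshold $\beta=\tfrac12-\limsup_m L_m/|P_m|$ and $Q(\alpha)=\infty$ strictly above it, and correctly observe that the borderline $\alpha=\beta$ is irrelevant to the supremum. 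That two-sided comparison is the right way to make the argument rigorous, your reduction of $\sup_m$ to $\limsup_m$ (using $L_m<\infty$ at each level and $P_m\asymp -m$ from Lemma \ref{lemma.prop.refining}(i)) is valid, and your treatment of the finite middle case is correct (though the full sequence $L_m/|P_m|$ need not converge to $0$; only $\limsup_m L_m/|P_m|=0$ holds, via the upper bound $L_m\le L+1$ eventually together with a subsequence realizing $\limsup_m L_m=L$).

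The one point you cannot dispatch as briefly as you suggest is the specialization of your unified identity to the first displayed case of \eqref{holder.estimate.new}. Your (correct) formula is $\alpha_0(f)-\tfrac12=-\limsup_m\frac{\max_k\log|\theta^{f,\pi}_{m,k}|}{-\log|\pi^{m+1}|}$, which matches the third case verbatim (there the denominator is positive, so $\max_k$ passes onto the numerator). Rewriting it over the denominator $\log|\pi^{m+1}|<0$ gives
\begin{equation*}
\alpha_0(f)-\tfrac12=\liminf_m\frac{\max_k\log|\theta^{f,\pi}_{m,k}|}{\log|\pi^{m+1}|}=\liminf_m\min_k\frac{\log|\theta^{f,\pi}_{m,k}|}{\log|\pi^{m+1}|},
\end{equation*}
\emph{not} $\limsup_m\max_k\frac{\log|\theta^{f,\pi}_{m,k}|}{\log|\pi^{m+1}|}$ as printed: dividing by a negative quantity turns $\max_k$ into $\min_k$ and $\limsup_m$ into $\liminf_m$, and the two expressions genuinely differ (take dyadic $\pi$ with $\theta_{m,0}=2^{-m\gamma_1}$ and $\theta_{m,k}=2^{-m\gamma_2}$ for $k\ge1$, $\gamma_1<\gamma_2$: the correct answer is $\gamma_1$ while the printed first case evaluates to $\gamma_2$). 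Note that the $\liminf_m\min_k$ form is also what the classical estimator \eqref{eq.Holder.estimate.old} uses. So either you silently commit the same sign slip as the paper's proof when you say the identity "specializes to the three stated forms," or you should state explicitly that your derivation yields the $\liminf_m\min_k$ form and that the first case of \eqref{holder.estimate.new} as printed needs this correction. Everything else in your proposal goes through.
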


\begin{proof}
From the given regularity $\alpha_0(f)$, Theorem \ref{main.thm} shows the existence of $C_0\in (0,\infty)$ satisfying
\begin{align*}
	\sup_{m,k} \Big( \vert \theta^{f,\pi}_{m,k} \vert & \vert \pi^{m+1} \vert^{\frac{1}{2}-\alpha_0(f)} \Big) = C_0
    \iff \sup_{m,k}  \bigg( \log \vert \theta^{f,\pi}_{m,k} \vert - \Big(\alpha_0(f) - \frac{1}{2} \Big) \log \vert \pi^{m+1} \vert \bigg) = \log C_0.
\end{align*}
Furthermore, we have
\begin{align}\label{eq.alpha.estimator}
	\sup_{m,k}  \bigg( \log \vert &\theta^{f,\pi}_{m,k} \vert - \Big(\alpha_0(f) - \frac{1}{2} \Big) \log \vert \pi^{m+1} \vert \bigg)<\infty \\
	&\iff \limsup_{m} \max_{k} \bigg( \log \vert \theta^{f,\pi}_{m,k} \vert - \Big(\alpha_0(f) - \frac{1}{2} \Big) \log \vert \pi^{m+1} \vert \bigg)< \infty.    \nonumber  
\end{align}
We now consider three different cases.
\begin{enumerate} [label=(\roman*)]
	\item $\limsup_m \max_{k\in I_m}\log|\theta_{m,k}^{f,\pi}| = \infty$.
	\\ This implies from \eqref{eq.alpha.estimator} that  $\Big(\alpha_0(f)- \frac{1}{2} \Big) \log \vert \pi^{m+1} \vert \xlongrightarrow{m \to \infty} \infty \iff \alpha_0(f)<\frac{1}{2}$. Thus, the last inequality of \eqref{eq.alpha.estimator} is equivalent to 
	\begin{align*}
		\limsup_{m} \max_k\frac{ \log \vert \theta^{f,\pi}_{m,k} \vert}{(\alpha_0(f)-\frac{1}{2})\log \vert \pi^{m+1} \vert} = 1
		\iff \frac{1}{2}-\alpha_0(f) =  \limsup_m \max_k \frac{\log|\theta^{f,\pi}_{m,k}|}{-\log|\pi^{m+1}|}.
	\end{align*}
	\item  $\limsup_m \max_{k\in I_m}\log|\theta_{m,k}^{f,\pi}| = -\infty$
	\\ Similarly, we have from \eqref{eq.alpha.estimator} that  $\Big(\alpha_0(f)- \frac{1}{2} \Big) \log \vert \pi^{m+1} \vert \xlongrightarrow{m \to \infty} -\infty \iff \alpha_0(f)>\frac{1}{2}$, thus 
	\begin{align*}
		\limsup_{m} \max_k\frac{ \log \vert \theta^{f,\pi}_{m,k} \vert}{(\alpha_0(f)-\frac{1}{2})\log \vert \pi^{m+1} \vert} = 1
		\iff \alpha_0(f)-\frac{1}{2} =  \limsup_m \max_k \frac{\log|\theta^{f,\pi}_{m,k}|}{\log|\pi^{m+1}|}.
	\end{align*}
	\item $\limsup_m \max_{k\in I_m}\log|\theta_{m,k}^{f,\pi}| = L \in (-\infty, \infty)$
	\\ This implies from \eqref{eq.alpha.estimator} that  $\Big(\alpha_0(f)- \frac{1}{2} \Big) \log \vert \pi^{m+1} \vert \xlongrightarrow{m \to \infty} \widetilde{L}$ for some constant $\widetilde{L}$. However, since $\log \vert \pi^{m+1} \vert \to -\infty$ as $m \to \infty$, the only possible option is $\alpha_0(f)=\frac{1}{2}$. 
\end{enumerate}
\end{proof}

\begin{remark} [Comparing the two estimators of H\"older regularity]
The major difference between the new characterization \eqref{holder.estimate.new} of H\"older regularity and the classical one in \eqref{eq.Holder.estimate.old} is that the new one uses the Schauder coefficient $|\theta_{m,k}^{f,\pi}|$, instead of projection $\langle f,\psi_{m,k}\rangle$ along an appropriate orthonormal basis. Along any complete refining and balanced partition sequence $\pi$, the Schauder coefficient $|\theta_{m,k}^{f,\pi}|$ can be precisely computed (without any approximation) via equation \eqref{eq.theta.coeff} unlike $\langle f,\psi_{m,k}\rangle$ which is defined as $\int_{\mathbb{R}}f(u)\psi_{m,k}(u) du$. Therefore, the new characterization will have less approximation error as an estimator for H\"older regularity.
\end{remark}

\vspace{-7mm}
\begin{figure}[h!]
\centering
\subfloat[Brownian motion]{\includegraphics[width=.49\textwidth]{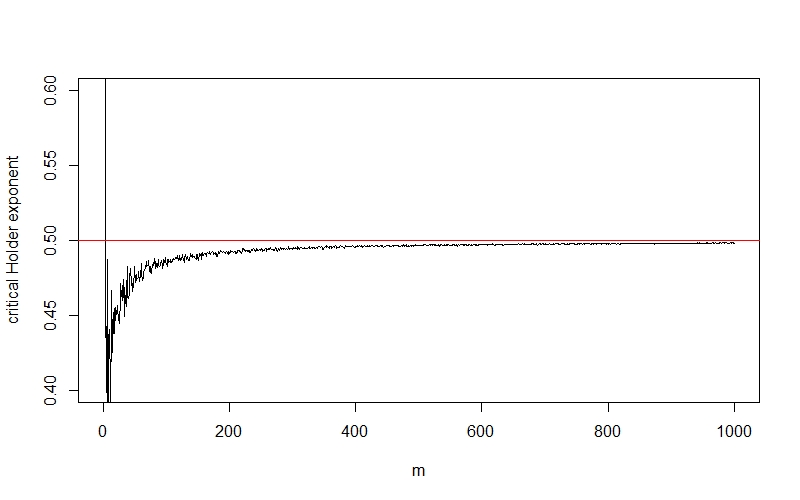}}
\subfloat[Fake Brownian motion]{\includegraphics[width=.49\textwidth]{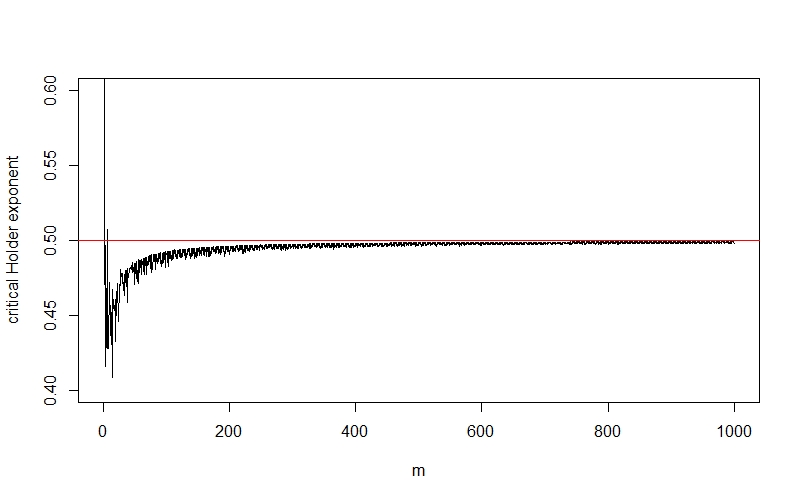}}
\caption{Estimating H\"older regularity}
\label{fig: estimator}
\end{figure}
\vspace{-5mm}

In Figure \ref{fig: estimator}, we plot the quantities $\max_k \frac{\log|\theta_{m,k}|}{\log|\pi^{m+1}|}$ for different values of $m$, where $(\theta_{m,k})$ are respective Schauder coefficients of Brownian motion and its fake version along the dyadic partition sequence on $[0,1]$ (for more details on the process faking BM, see Example \ref{ex. mixed coefficients}). We leave further properties of the new H\"older estimator based on \eqref{holder.estimate.new}, such as convergence rate, as future work.

\section{Critical H\"older exponent and variation index}    \label{sec. critical Holder and variation index}

Because of the inclusion $C^{\beta}([0, T], \mathbb{R}) \subset C^{\alpha}([0, T], \mathbb{R})$ for any $0 < \alpha \le \beta \le 1$, we introduce the following concept of critical H\"older exponent.

\begin{definition} [Critical H\"older exponent]\label{def: holder exponent}
We define \textit{critical H\"older exponent} of $x \in C^0([0, T], \mathbb{R})$
\begin{equation}    \label{def.critical Holder exponent}
    c(x, [0, T]) := \sup \Big\{ \alpha \in (0, 1] : x\in C^\alpha([0, T], \mathbb{R}) \Big\}.
\end{equation}
\end{definition}

\begin{definition}[Variation index along a partition sequence \cite{das2022theory}]  \label{def. variation index}
The variation index of $x\in C^0([0,T],\mathbb{R})$ along a refining partition sequence $\pi$ with vanishing mesh is defined as
\begin{equation}    \label{def.variation index}
    p^\pi(x) := \inf \bigg\{p\geq 1 \, : \, \limsup_{n\uparrow \infty} \sum_{t^n_i, t^n_{i+1} \in \pi^n} \big\vert x(t^n_{i+1})-x(t^n_{i})\big\vert^p < \infty \bigg\}.
\end{equation}
\end{definition}

We note that the rough path theory defines the \textit{$p$-variation of $x$} by taking the supremum of $p$ variations over all partitions of $[0, T]$ for a given continuous function $x \in C^0([0, T], \mathbb{R})$:
\begin{equation*}
\Vert x \Vert_{p-var} := \bigg(\sup_{\pi\in \Pi([0,T])} \sum_{t_j, t_{j+1} \in \pi} \big\vert x(t_{j + 1}) - x(t_j) \big\vert^p \bigg)^{1/p},
\end{equation*}
where $\Pi([0,T])$ denotes the set of all partitions of $[0,T]$. Also, it is well-known that $\Vert x \Vert_{\frac{1}{\alpha}-var} < \infty$ holds for any $x \in C^{\alpha}([0, T], \mathbb{R})$. We formalize this in the following lemma with an inequality involving the variation index and critical H\"older exponent of a continuous function. Its proof is provided in Supplement A \cite{suppA}.

\begin{lemma}   \label{lem. variation index inequality}
For any $x \in C^{\alpha}([0, T], \mathbb{R})$, $x$ has finite $\frac{1}{\alpha}$-variation, i.e., $\Vert x \Vert_{\frac{1}{\alpha}-var} < \infty$. Furthermore, for every refining sequence $\pi$ of partitions on $[0, T]$ with vanishing mesh, we have the inequality
\begin{equation}    \label{ineq. variation index and critical exponent}
	\frac{1}{p^\pi(x)}\geq c(x,[0,T]).
\end{equation}
\end{lemma}

Even though the critical H\"older exponent and the reciprocal of variation index are the same for fractional Brownian motions along any partition sequence with mesh size converging to zero fast enough (as a result of self-similarity), they are not the same for general processes. However, in some financial applications, when estimating (H\"older) roughness of a path, one estimates $p$-th variation instead of H\"older regularity. The following provides an example of a deterministic function with critical H\"older exponent different from the reciprocal of variation index (i.e., the inequality \eqref{ineq. variation index and critical exponent} is strict). Therefore, one should not determine H\"older regularity by measuring the variation index without appropriate normality assumption on the path.

\begin{example}[A function with critical H\"older exponent different from reciprocal of variation index]   \label{ex.different holder and variation index}
For a fixed $0 < \epsilon_0 < \frac{1}{3}$, consider a function $x : [0,1] \rightarrow \mathbb{R}$ given by the Schauder representation along the dyadic partition $\mathbb{T}$
\begin{equation}    \label{eq.different holder and variation index}
	x(t) = \sum_{m=0}^{\infty} \sum_{k \in I_m} \theta_{m,k} e^{\mathbb{T}}_{m,k}(t),
\end{equation}
where the Schauder coefficients are given by
\begin{align*}
	\theta_{m,k} = \begin{cases}
		2^{\epsilon_0 m}, & k = 0, \, \left\lfloor\frac{2^m}{\sqrt[4]{m}}\right\rfloor\wedge (2^m-1), \, 2\left\lfloor\frac{2^m}{\sqrt[4]{m}}\right\rfloor\wedge (2^m-1), \cdots \\
		0, & \text{otherwise}.
	\end{cases}
\end{align*}
Then, the function $x$ is continuous and we have the strict inequality
\begin{equation*}
	\frac{1}{p^{\mathbb{T}}(x)} > c(x, [0,1]).
\end{equation*}
\end{example}

\begin{proof}
Lemma~\ref{lem: continuity condition} (iii) implies that $x$ is continuous. We will show that $c(x,[0,T]) = \frac{1}{2}-\epsilon_0$. It is easy to verify
\begin{equation*}
	\sup_{m,k}\big( |\theta_{m,k}|\times 2^{m(\alpha-\frac{1}{2})}\big) = \sup_{m,k}\big( 2^{(\epsilon_0+\alpha-\frac{1}{2})m}\big) =
	\begin{cases}
		0, \qquad &\alpha < \frac{1}{2}-\epsilon_0,
		\\
		1, \qquad &\alpha=\frac{1}{2}-\epsilon_0,
		\\
		\infty, \qquad &\alpha>\frac{1}{2}-\epsilon_0,
	\end{cases}
\end{equation*}
thus, Theorem \ref{main.thm} implies $x\in C^\alpha([0,1],\mathbb{R})$ for every $\alpha\leq \frac{1}{2}-\epsilon_0$. From the definition of critical exponent, this implies $c(x,[0,1]) = \frac{1}{2}-\epsilon_0$.
We now show that $x$ has finite quadratic variation along $\mathbb{T}$. Quadratic variation at level $n$ can be represented with the Schauder coefficients from Proposition 4.1 of \cite{das2021}:
\begin{align*}
	[x]^{(2)}_{\mathbb{T}^n}(1) &= \frac{1}{2^n} \sum_{m=0}^{n-1}\sum_{k=0}^{2^m-1} \theta_{m,k}^2=\frac{1}{2^n}\sum_{m=0}^{n-1} 2^{2\epsilon_0 m} \times \lfloor \sqrt[4]{m}\rfloor
	\leq \frac{1}{2^n} \sum_{m=0}^{n-1} 2^{2\epsilon_0m + \frac{1}{4}log_2 m}\\
	& \leq \frac{1}{2^n}\sum_{m=0}^{n-1} 2^{(2\epsilon_0  + \frac{1}{4})m} = \frac{1}{2^n} \cdot \frac{2^{(2\epsilon_0  + \frac{1}{4})n}-1}{2^{2\epsilon_0  + \frac{1}{4}}-1} \xrightarrow[]{n\to \infty} 0.
\end{align*}  
The last limit follows from the fact that $\epsilon_0<\frac{1}{3}$ implies $(2\epsilon_0  + \frac{1}{4})-1<0$. Therefore, we have $x\in Q_\mathbb{T}([0,1],\mathbb{R})$ with $[x]^{(2)}_\mathbb{T}(1) = 0$. From the definition of the variation index, we have $p^\mathbb{T}(x)\leq 2$ which implies $\frac{1}{p^\mathbb{T}(x)} \geq \frac{1}{2}>\frac{1}{2}-\epsilon_0 = c(x,[0,1])$. This concludes the proof.
\end{proof}

The $p$-th variation of (a sample path of) a stochastic process along a given sequence of partitions and H\"older exponent are often used as an important tool to identify the Hurst index of a fractional Gaussian process. The following example shows one can construct a deterministic function which has the same `roughness' order as Brownian motion but does not possess a Gaussian structure.

\begin{example}[A function having the same path properties as Brownian motion]\label{ex.sqrt(m)}
We consider the dyadic partition $\mathbb{T}$ of $[0, 1]$ and $x :[0, 1] \rightarrow \mathbb{R}$ with the Schauder representation
\begin{align}    \label{eq.sqrt(m)}
    x(t) &= \sum_{m=0}^{\infty} \sum_{k \in I_m} \theta_{m,k} e^{\mathbb{T}}_{m,k}(t), \quad \text{where} \quad
    \theta_{m,k} = 
        \begin{cases}
		\sqrt{m}, & k = 0, m, 2m, \cdots, \left\lfloor \frac{2^m-1}{m}\right\rfloor m,
		\\
		~~ 0, & \text{otherwise}.
        \end{cases}
\end{align}
Then, $x \in C^0([0, 1], \mathbb{R})$, $\Vert x \Vert_{C^{1/2}([0, 1])} = \infty$, and its quadratic variation along $\mathbb{T}$ is non-trivial and linear, i.e., $[x]_{\mathbb{T}}(t) := \limsup_{n \to \infty} \sum_{t^n_i, t^n_{i+1} \in \mathbb{T}^n} \vert x(t^n_{i+1})-x(t^n_{i})\vert^2 = t$ for any $t \in [0, 1]$. Almost every path of standard Brownian motion has these properties.
\end{example}
\begin{proof}

The continuity of $x$ is straightforward from Lemma~\ref{lem: continuity condition} (iii).

Since we have $\sup_{m,k}(|\theta_{m,k}| |\pi^{m+1}|^{\frac{1}{2}-\alpha}) = \sup_{m} (\sqrt{m}\times2^{(m+1)(\alpha-\frac{1}{2})})$, this quantity is infinity when $\alpha=\frac{1}{2}$. Hence, Theorem \ref{main.thm} concludes $\Vert x \Vert_{C^{1/2}([0, 1])} = \infty$. A similar argument shows that $\sup_{m,k} (|\theta_{m,k}| |\pi^{m+1}|^{\frac{1}{2}-\alpha})<\infty$, hence $\Vert x\Vert_{C^{\alpha}([0, 1])} < \infty$, if $\alpha<\frac{1}{2}$.
	
We now show that the quadratic variation of $x$ along $\mathbb{T}$ exists and is strictly positive. For a fixed $t \in (0, 1]$, the quadratic variation up to level $n$ can be represented in terms of the Schauder coefficients from Proposition 4.1 of \cite{das2021}:
\begin{align}    \label{eq.int.part}
    [x]^{(2)}_{\mathbb{T}^n}(t) = \frac{1}{2^n} \sum_{m=0}^{n-1}\sum_{k=0}^{2^m-1} \theta_{m,k}^2 \mathbbm{1}_{\frac{k}{2^m}<t} 
    = \frac{1}{2^n} \sum_{m=0}^{n-1}\sum_{k=0}^{\lceil t2^m\rceil-1} \theta_{m,k}^2    
    = \frac{1}{2^n} \sum_{m=0}^{n-1} m \times \frac{\lceil t2^m\rceil }{2^m}\Big(\lfloor \frac{2^m-1}{m}\rfloor+1 \Big).
\end{align}
Using the equalities $2^m-m-1 < m \lfloor \frac{2^m-1}{m}\rfloor \leq 2^m-1$ and $t\leq \frac{\lceil t2^m\rceil }{2^m} < \frac{2^mt+1}{2^m}$, we obtain from \eqref{eq.int.part} the following two-sided bounds on $[x]^{(2)}_{\mathbb{T}^n}(t)$
\begin{align*}
    \frac{1}{2^n} \sum_{m=0}^{n-1} t(2^m-1) < &[x]^{(2)}_{\mathbb{T}^n}(t)
    < \frac{1}{2^n} \sum_{m=0}^{n-1} (t+\frac{1}{2^m})(2^m+m-1)
    \\
    \Rightarrow  t\frac{1}{2^n} (2^n-1 - n) < &[x]^{(2)}_{\mathbb{T}^n}(t)< t\frac{1}{2^n} \Big( 2^n-1 + \frac{(n-1)n}{2}-n \Big)+ \frac{2n}{2^n}.
\end{align*}
Taking limit $n \to \infty$, we conclude $[x]^{(2)}_\mathbb{T}(t) = t$, i.e., $x$ has non-trivial quadratic variation along $\mathbb{T}$.
\end{proof}

\begin{remark}  \label{rem.make random}
Even though we construct deterministic functions with certain path properties in Example \ref{ex.different holder and variation index} and \ref{ex.sqrt(m)}, we can make them stochastic processes with random Schauder coefficients. For example, consider a sequence of Bernoulli random variables $\{\beta_{m, k}\}$ with $\beta_{m, k} \sim Bernoulli(1/2)$ (these random variables $\beta_{m,k}$ do not need to be independent), replace the deterministic coefficients $\theta_{m, k}$ with the random coefficients $\theta_{m, k}\beta_{m, k}$ in the representations \eqref{eq.different holder and variation index} and \eqref{eq.sqrt(m)}, respectively, and denote the resulting process by $Y$. Then, the above arguments can be applied to show that almost every path of the stochastic process $Y$ has the same properties as in Example \ref{ex.different holder and variation index} and \ref{ex.sqrt(m)}, respectively.
\end{remark}

Figure \ref{fig:sqrt_m} illustrates a deterministic function $t \mapsto x(t)$ in Example \ref{ex.sqrt(m)} and a sample path of the corresponding process $Y$ mentioned in Remark \ref{rem.make random}, together with their quadratic variations along the dyadic partitions, truncated at level $n=16$.

\vspace{-7mm}
\begin{figure}[!htb]
\centering
\subfloat[Function $x$ and a sample path of $Y$]		{\includegraphics[width=.49\linewidth]{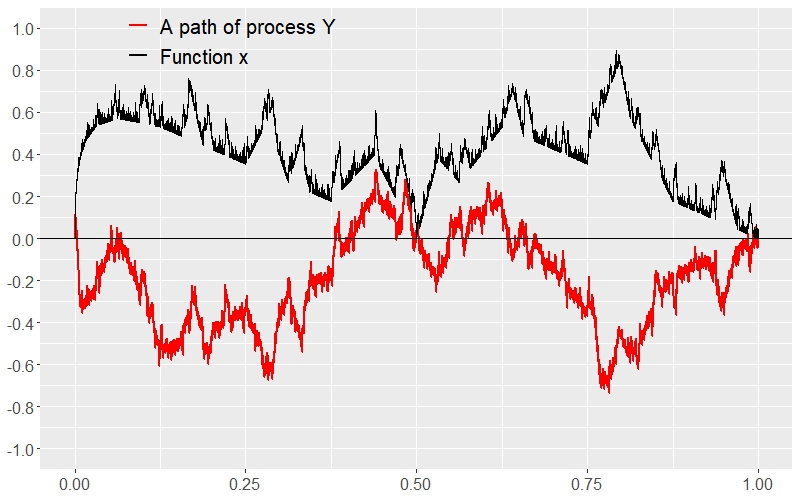}}
\subfloat[Quadratic variations]{\includegraphics[width=.49\linewidth]{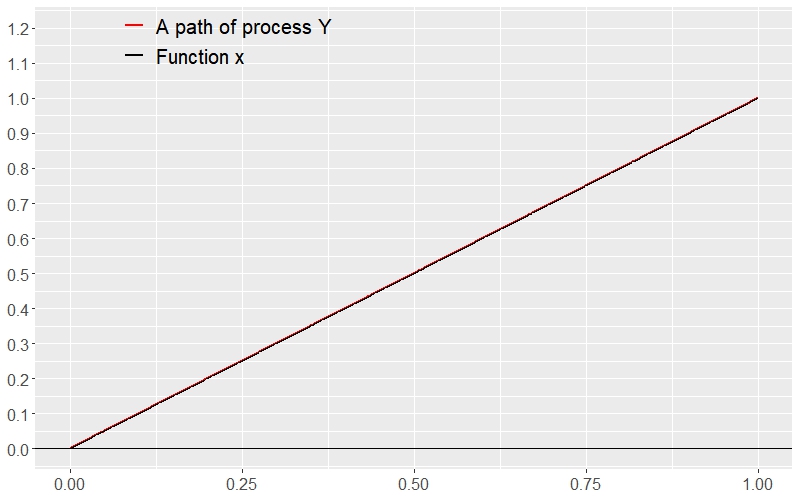}}
\caption{Function $x$, a sample path of $Y$, and their quadratic variations in Example \ref{ex.sqrt(m)} and Remark  \ref{rem.make random}.}
\label{fig:sqrt_m}
\end{figure}
\vspace{-5mm}

\section{Fake Brownian motion} \label{sec. mimicking BM}

In this section, we construct stochastic processes which fake Brownian motion using the Schauder representation. In what follows, we assume $T=1$ without loss of generality, as the theory can be easily extended to any finite interval. For a finitely refining sequence $\pi$ of partitions of $[0, 1]$, we have the Schauder representation of a Brownian motion $B$ along $\pi$, from Proposition \ref{prop:coeff_hat_func} (more precisely Lemma 5.1 of \cite{das2021}):
\begin{equation}    \label{eq. standard normal coefficients}
B(t) = Zt + \sum_{m=0}^{\infty} \sum_{k \in I_m} \theta_{m, k}^{B} e^{\pi}_{m, k}(t), \qquad t \in [0, 1],
\end{equation}
where $Z = B(1)$ and the coefficients $\theta^B_{m, k}$ are independent and identically distributed standard normal random variables. 

We shall now construct \textit{non-Gaussian} processes, which have many properties in common with Brownian motion by replacing the normal Schauder coefficients $\theta^B_{m, k}$ with some non-normal random variables. Furthermore, we shall show that these non-Gaussian processes even `mimic' the normality; many classical normality tests conclude that these fake processes have Gaussian marginals despite being theoretically non-Gaussian.

\subsection{Construction of fake Brownian processes}    \label{subsec. mimicking BM}
We shall focus on the construction of fake processes on the interval $[0,1]$, as Remark \ref{parching.BM} below explains how the construction method can be extended to the entire non-negative real line $[0, \infty)$. The following theorem provides a list of properties \textit{(BM i - vii)} of Brownian motion $B$ in \eqref{eq. standard normal coefficients} we shall fake.

\begin{theorem}[Construction of fake Brownian motion]    \label{thm.fake-BM}
For a balanced and complete refining sequence of partition $\pi$ of $[0,1]$, consider the stochastic process $Y$ defined on $[0, 1]$ via Schauder representation along $\pi$
\begin{equation*}
	Y(t) =Zt+ \sum_{m=0}^{\infty} \sum_{k \in I_m} \theta^Y_{m,k} e^{\pi}_{m,k}(t),
\end{equation*}
where $\{\theta_{m,k}^Y\}$ are independent random variables satisfying
\begin{enumerate} [label=(\alph*)]
\item $\mathbb{E}[\theta^Y_{m,k}] = 0$ and $Var[\theta^Y_{m,k}]=1$,
\item there exist positive constants $C_1$ and $C_2$ such that 
\begin{equation*}
\mathbb{P} \big[ |\theta^Y_{m, k}| \ge \delta \big] \le C_1 \exp \bigg(-\frac{{\delta}^2}{2C_2} \bigg)
\end{equation*}
holds for every $\delta > 0$, $k \in I_m$, $m \ge 0$.
\end{enumerate}
Then, the process $Y$ satisfies the following properties (BM i-vii).
\begin{enumerate}
	\item[(BM i)] $Y$ has a continuous path almost surely.
	\item[(BM ii)] $\mathbb{E}[Y(t)] = 0$ and $\mathbb{E}[Y(t)Y(s)] = t \wedge s$ for any $t, s \in [0, 1]$.
	\item[(BM iii)] Increments of $Y$ over disjoint intervals are uncorrelated, i.e., $\mathbb{E}[(Y(t)-Y(s))(Y(v)-Y(u))] = 0$ for every $0 \le s \le t \le u \le v \le 1$.
	\item[(BM iv)] Almost every path of $Y$ belongs to $C^{\frac{1}{2}-}([0, T], \mathbb{R})$. 
	\item[(BM v)] The Schauder coefficients $\theta_{m, k}^{Y}$ have mean $0$, variance $1$, and are uncorrelated, i.e., $\mathbb{E}(\theta_{m, k}^{Y}) = 0, \text{Var}(\theta_{m, k}^{Y}) = 1$ and $\mathbb{E}(\theta_{m, k}^{Y}\theta_{m', k}^{Y}) = \mathbbm{1}_{m=m'}\mathbbm{1}_{k=k'}$.
\end{enumerate}
Moreover, in addition to (a) and (b), if we further assume the condition
\begin{enumerate}
	\item [(c)] $\mathbb{E}[(\theta^Y_{m,k})^4] < \infty$ for every $k \in I_m$, $m \ge 0$,
\end{enumerate}
then, the process $Y$ satisfies the following additional properties
\begin{enumerate} [label=(BM \roman*)]
	\setcounter{enumi}{6}
	\item The quadratic variation along $\pi$ is given as $[Y]_{\pi}(t) = t$ for every $t \in [0, 1]$ almost surely.
	\item  The quadratic variation along any coarsening partition $\nu$ of $\pi$ (see Definition 6.1 of \cite{das2021} for further details on coarsening) is given as $[Y]_{\nu}(t) = t$ for every $t \in [0, 1]$ almost surely.
\end{enumerate}
\end{theorem}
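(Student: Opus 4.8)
The plan is to verify each of the seven properties in turn, noting that the hypotheses (a), (b), and (c) are precisely tailored so that the earlier machinery applies. First I would observe that condition (b) is exactly the tail bound \eqref{con.theta bound delta} of Lemma~\ref{lem: continuity condition}(ii) with $\epsilon$ taken arbitrarily in $(0, 1/2)$, since $|\theta^Y_{m,k}||\pi^{m+1}|^{1/2-\epsilon} \le |\theta^Y_{m,k}|$ for small mesh. Thus \emph{(BM i)} (almost sure continuity) follows directly from Lemma~\ref{lem: continuity condition}, and \emph{(BM iv)} (membership in $C^{1/2-}$) follows from Corollary~\ref{cor. Holder continuity}, which upgrades the same tail bound to $\alpha$-H\"older continuity for every $\alpha < \epsilon$; letting $\epsilon \uparrow 1/2$ gives the full $C^{1/2-}$ conclusion. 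These two are essentially immediate citations of the prior results.

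Next I would handle the moment and covariance statements \emph{(BM ii)}, \emph{(BM iii)}, and \emph{(BM v)}. Property \emph{(BM v)} is a restatement of hypothesis (a) together with the independence assumption, so it is automatic. For \emph{(BM ii)} and \emph{(BM iii)}, the key point is that the process $Y$ has \emph{exactly the same} Schauder coefficient covariance structure as the Brownian motion $B$ in \eqref{eq. standard normal coefficients}, since both have mean-zero, variance-one, uncorrelated coefficients. Because $\mathbb{E}[Y(t)]$ and $\mathbb{E}[Y(t)Y(s)]$ are determined by the first and second moments of the coefficients through the bilinear expansion $Y(t) = Zt + \sum_{m,k}\theta^Y_{m,k}e^\pi_{m,k}(t)$, I would expand the product $Y(t)Y(s)$, take expectations term by term (justifying the interchange via the uniform $L^2$ control coming from (a) and the summability bound \eqref{ineq.boundofsumofe.pi}), and observe that every cross term vanishes by uncorrelatedness, leaving exactly the same double sum as for $B$. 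Since that sum equals $t\wedge s$ for Brownian motion, it equals $t \wedge s$ for $Y$ as well. Property \emph{(BM iii)} then follows from \emph{(BM ii)} by the bilinearity of covariance applied to increments over disjoint intervals.

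Finally, for the quadratic-variation statements \emph{(BM vi)} and \emph{(BM vii)}, which require the extra fourth-moment hypothesis (c), I would invoke the representation of quadratic variation along $\pi$ in terms of Schauder coefficients (Proposition~4.1 of \cite{das2021}, as already used in Examples~\ref{ex.different holder and variation index} and \ref{ex.sqrt(m)}), which expresses $[Y]^{(2)}_{\pi^n}(t)$ as a normalized sum of squared coefficients $\theta^2_{m,k}$. By (a) these squared coefficients have mean one, so the expected quadratic variation is $t$; the role of (c) is to provide the variance control (uniformly bounded fourth moments) needed to apply a Borel--Cantelli or strong-law argument and promote convergence in expectation to almost-sure uniform convergence, yielding $[Y]^{(2)}_\pi(t) = t$. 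I expect \emph{(BM vi)} to be the main obstacle: unlike the deterministic examples, here one must control fluctuations of a random sum, so the argument needs the almost-sure uniform-in-$t$ convergence in \eqref{eq.pointwisecv} rather than mere pointwise or $L^2$ convergence, and this is where hypothesis (c) is genuinely used. Property \emph{(BM vii)} would then follow by combining \emph{(BM vi)} with the coarsening-invariance result (Definition~6.1 and the associated theorem of \cite{das2021}), which shows that finite quadratic variation along $\pi$ is preserved under passing to any coarsening $\nu$, so the value $t$ is inherited.
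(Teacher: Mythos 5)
Your proposal is correct and follows essentially the same route as the paper: \emph{(BM i)} and \emph{(BM iv)} via Lemma~\ref{lem: continuity condition} and Corollary~\ref{cor. Holder continuity} (the paper simply takes $\epsilon=1/2$ rather than letting $\epsilon\uparrow 1/2$), \emph{(BM ii)} by the term-by-term expansion with Fubini justified by \eqref{ineq. moment bounds}--\eqref{ineq.boundofsumofe.pi} (which the paper packages as Theorem~\ref{thm. finite moments} together with Proposition~\ref{prop. finiteness condition} at $H=1/2$), \emph{(BM iii)} by bilinearity, and \emph{(BM v)} by construction. For \emph{(BM vi)} and \emph{(BM vii)} the paper does not reprove anything but only verifies that $Y$ lies in the classes $\mathcal{B}^{\pi}$ and $\mathcal{A}^{\pi}$ of \cite{das2021} and cites Theorems 5.2 and 6.5 there; your sketch (mean from (a), fourth-moment concentration from (c), Borel--Cantelli, then coarsening invariance) is a faithful informal reconstruction of what those cited results do.
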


\begin{remark}  \label{rem. critical Holder BM}
The property \textit{(BM iv)} can be strengthened under an additional assumption on the Schauder coefficients. 
If all random variables $(\theta^Y_{m, k})$ have the same bounded support such that
\begin{equation*}
	\mathbb{P} \Big[ \sup_{m, k} |\theta^{Y}_{m, k}(\omega)| \le C \Big] = 1
\end{equation*}
holds for some constant $C > 0$, then $Y$ of Theorem \ref{thm.fake-BM} satisfies

\noindent (BM iv') \textit{Almost every path of $Y$ has the critical H\"older exponent $1/2$.}
\end{remark}

\begin{proof}
We first note that we use some results (Theorem \ref{thm. finite moments} and Proposition \ref{prop. finiteness condition}) of Section \ref{sec. moment} in this proof. We shall prove each property from \textit{(BM i)} to \textit{(BM vii)} one by one.

\textbf{(BM i)} Since condition (b) coincides with that of Lemma \ref{lem: continuity condition} with $\epsilon = 1/2$, the result follows from Lemma \ref{lem: continuity condition}.

\textbf{(BM ii)} First, Proposition \ref{prop. finiteness condition} for the case $H = 1/2$ shows that the family of coefficients $(\theta^B_{m, k})$ satisfies the condition \eqref{ineq. moment bounds} for $\ell = 2$. Moreover, we have the uniform bound
\begin{equation*}
	\mathbb{E} \big| \theta^Y_{m_1, k_1} \theta^Y_{m_2, k_2} \big| \le \frac{1}{2} \Big( \mathbb{E}[(\theta^Y_{m, k})^2] + \mathbb{E}[(\theta^Y_{m', k'})^2] \Big) = \mathbb{E}[(\theta^Y_{m, k})^2] \le \sqrt{\mathbb{E}[(\theta^Y_{m, k})^4]} < \infty,
\end{equation*}
and the argument in the proof of Proposition \ref{prop. finiteness condition} yields that the condition \eqref{ineq. moment bounds} for $\ell = 2$ is satisfied for the family $(\theta^Y_{m, k})$ as well. Theorem~\ref{thm. finite moments} then proves the property, since we matched the mean and covariance of the Schauder coefficients, i.e., $\mathbb{E}[\theta^Y_{m, k}] = \mathbb{E}[\theta^B_{m, k}] = 0$, $\mathbb{E}[\theta^Y_{m, k}\theta^Y_{m', k'}] = \mathbb{E}[\theta^B_{m, k}\theta^B_{m', k'}] = \mathbbm{1}_{m=m'}\mathbbm{1}_{k=k'}$.

\textbf{(BM iii)} From \textit{(BM ii)}, we have 
\begin{align*}
	 \quad \mathbb{E}[(Y(t)-Y(s))(Y(v)-Y(u))] 
	&= \mathbb{E}[Y(t)Y(v)] -\mathbb{E} [Y(t)Y(u)] - \mathbb{E}[Y(s)Y(v)] + \mathbb{E} [Y(s)Y(u)]
	\\
	&= t-t-s+s = 0.
\end{align*}
We note that Brownian motion has independent increments, however, uncorrelatedness between increments does not generally imply independent increments for non-Gaussian process $Y$.

\textbf{(BM iv)} This follows from Corollary \ref{cor. Holder continuity}.

\textbf{(BM iv') of Remark \ref{rem. critical Holder BM}} The quantity $\sup_{m,k}(|\theta^Y_{m,k}||\pi^{m+1}|^{\frac{1}{2}-\alpha})$ is finite if and only if $\alpha \le 1/2$, thus Theorem \ref{main.thm} yields the result.

\textbf{(BM v)} This is obvious from the construction of $Y$.

\textbf{(BM vi)} The process $Y$ belongs to the class $\mathcal{B}^{\pi}$ in (13) of \cite{das2021}, thus $[Y]_{\pi}(t) = t$ almost surely from \cite[Theorem ~5.2]{das2021}.

\textbf{(BM vii)} The process $Y$ further belongs to the class $\mathcal{A}^{\pi}$ in \cite[Equation (31)]{das2021}, thus Theorem 6.5 of \cite{das2021} proves the property.
\end{proof}
We note that the fake process $Y$ in Theorem \ref{thm.fake-BM} cannot be a martingale (for any filtration) unless Brownian motion. If it were a martingale, then it is continuous from \textit{(BM i)}, square-integrable, i.e., $\mathbb{E}[(Y(t))^2] = t < \infty$ from \textit{(BM ii)}, and  \cite[Theorem ~1.5.13]{KS1} yields the existence of a unique (up to indistinguishability) continuous process $\langle Y \rangle_t$ such that $Y^2_t - \langle Y \rangle_t$ is a martingale. Moreover,  \cite[Theorem ~1.5.8]{KS1} and \textit{(BM vi)} conclude that $\langle Y \rangle_t = [Y]_{\pi}(t) = t$ holds almost surely for every $t \in [0, 1]$. The continuity of $t \mapsto \langle Y \rangle_t$ yields that the process $\langle Y \rangle_t$ is indistinguishable from $t$ and L\'evy's characterization of Brownian motion concludes that $Y$ should be a Brownian motion, which is a contradiction.

\begin{remark}[Fake Brownian motion on $[0,\infty)$]   \label{parching.BM}
Our construction method of fake Brownian motion in this subsection can be extended to the interval $[0, \infty)$ as Brownian motion has independent increments. Using the construction, we can define a sequence of independent fake Brownian motions $Y^{(i)}$ on intervals $[i-1, i]$ for $i = 1, 2, \cdots $ and patch them recursively by 
\begin{align*}
	Y_t := Y^{(1)}(t), \qquad &0 \le t \le 1,
	\\
	Y_t := Y_n + Y^{(n+1)}_{t-n}, \qquad &n \le t \le n+1.
\end{align*}
Then, the resulting process $Y$ has the same aforementioned properties as Brownian motion on $[0,\infty)$. See \cite[Corollary ~2.3.4]{KS1} for further details on this patching. However, this patching method does not work for fake fractional Brownian motions in the next subsection, since the increments of fBM are correlated for all Hurst index $H\neq 1/2$.
\end{remark}

\subsection{Examples} \label{subsec. examples BM}
We now provide some examples of fake processes that satisfy the assumptions of Theorem \ref{thm.fake-BM}.

\begin{example} [Uniformly distributed coefficients]\label{ex. uniform coefficients}
Let us denote $U$ a uniformly distributed random variable over the interval $[-\sqrt{3}, \, \sqrt{3}]$, i.e. $U \sim \text{Uniform}(-\sqrt{3}, \, \sqrt{3})$ such that its mean and variance are zero and one respectively. For a given complete refining and balanced sequence $\pi$ of partitions of $[0, 1]$, consider a process $Y$ on $[0, 1]$ with the representation
\begin{equation}    \label{eq. uniform coefficients}
    Y(t) = Zt+ \sum_{m=0}^{\infty} \sum_{k \in I_m} \theta^Y_{m,k} e^{\pi}_{m,k}(t),
\end{equation}
where the Schauder coefficients $\theta^Y_{m, k}$ are independent and identically distributed as $U$ and $Z\sim N(0,1)$ independent of all the other Schauder coefficients. Then, from Theorem \ref{thm.fake-BM} the process $Y$ has properties \textit{(BM i - vii)} as Brownian motion.
\end{example}

\begin{example} [Centered, scaled beta distributed coefficients]    \label{ex. beta coefficients}
Since the beta distribution is a family of continuous probability distributions with compact support which generalizes uniform distribution, we can use beta distribution as a choice of Schauder coefficients. For example, we consider two-centered, scaled beta distributed random variables
\begin{equation}   \label{def.beta.dist}
    B_1 \sim \sqrt{20}\Big(\text{Beta}(2, 2) - \frac{1}{2}\Big), \qquad B_2 \sim \sqrt{8}\Big(\text{Beta}(\frac{1}{2}, \frac{1}{2}) - \frac{1}{2}\Big),
\end{equation}
such that both of them have mean zero and variance one. We construct two processes $Y_1$ and $Y_2$ on $[0, 1]$ such that their Schauder coefficients $\theta^{Y_i}_{m, k}$ in the representation \eqref{eq. uniform coefficients} are independent and identically distributed as $B_i$ of \eqref{def.beta.dist} for $i = 1, 2$.
\end{example}

Figure 3 of Supplement B \cite{suppB}  provides simulations of the processes $B(t)-Zt$ of \eqref{eq. standard normal coefficients}, $Y-Zt$ in Example \ref{ex. uniform coefficients}, and $Y_i - Zt$ in Example \ref{ex. beta coefficients} for $i = 1, 2$, in black, red, blue, and green lines respectively, along the dyadic partitions on $[0, 1]$, up to level $n = 15$. Note that these trajectories have both initial and terminal values to be $0$. To have a trajectory of a real Brownian motion (and its fake versions), we can add a linear term $Zt$ for an independent standard normal random variable $Z$, to those sample paths in Figure 3(a). Figure 3(b) shows that all paths indeed have the same quadratic variation as standard Brownian motion.

\begin{example} [Mixed coefficients]    \label{ex. mixed coefficients}
In order to construct fake Brownian motion, one can also mix standard normal random variables and other variables with the same mean and variance when choosing Schauder coefficients. For example, given the Schauder representation \eqref{eq. uniform coefficients}, we set the coefficients to be all independent, and distributed as
\begin{equation}    \label{eq.mixed coefficients}
	\theta^{Y_{mix}}_{m, k} \sim 
	\begin{cases}
		\qquad N(0, 1), \qquad &\text{if } m \text{ is odd},
		\\
		\text{Uniform}(-\sqrt{3}, \, \sqrt{3}), \qquad &\text{if } m \text{ is even}.
	\end{cases}
\end{equation}
\end{example}
Figure 4 of Supplement B \cite{suppB} describes this fake process and Section \ref{subsec. normality BM} provides normality test results for the fake process.

\begin{example} [Fake Brownian processes along a non-uniform partition]    \label{ex. non-dyadic}
Since the fake processes in Examples \ref{ex. uniform coefficients} - \ref{ex. mixed coefficients} were constructed along the same dyadic partitions, we provide an example which is contracted along a `non-uniform' partition sequence. Define a sequence of partitions $\pi =(\pi^n)_{n \in \mathbb{N}}$ with $\pi^n =\big(0=t^n_1 < \cdots <t^n_{N(\pi_n)} = 1)$ satisfying for each $n \in \mathbb{N}$
\begin{equation*}
	t^{n+1}_{2k} = t^n_k, \qquad t^{n+1}_{2k+1} = t^n_k + \frac{t^n_{k+1} -t^n_k}{2.5}, \qquad \forall \, k = 1, \cdots, 2^n.
\end{equation*}
\end{example}
Figure 5 of Supplement B \cite{suppB}  provides sample paths and corresponding quadratic variations of the fake processes in Example \ref{ex. uniform coefficients} - \ref{ex. mixed coefficients} along the non-uniform partition sequence in Example \ref{ex. non-dyadic}.

\subsection{Normality tests on the marginal distributions: theory vs reality}\label{subsec. normality BM} 

Since we replaced normally distributed Schauder coefficients of Brownian motion with other non-Gaussian coefficients, the fake processes we constructed in the previous subsection are non-Gaussian processes, even with the mixed coefficients in Example \ref{ex. mixed coefficients}. In particular, the finite-dimensional distributions of these processes cannot be identified as any well-known distribution, even though the first two moments coincide with those of Brownian motion (property \textit{(BM ii)} of Theorem \ref{thm.fake-BM}). Nonetheless, it turns out that those non-Gaussian processes we constructed in the previous subsection can even mimic the `Gaussian' property.

In this regard, we first simulated $5000$ sample paths of the fake Brownian process $Y_{mix}$ in Example \ref{ex. mixed coefficients}, defined on the support $[0, 1]$ along the dyadic partitions, truncated at level $n = 15$. We randomly chose $10$ dyadic points $t_i \in (0, 1)$ (such that each $t_i$ is given as $k/2^{15}$ with $k \in \mathbb{Z}$) and took $5000$ sample values $Y_{mix}(t_i)$ at each $t_i$ for $i = 1, 2,\cdots, 10$. Then, we did three commonly used normality tests, namely Shapiro–Wilk test, Kolmogorov–Smirnov test, and Jarque–Bera test, to decide whether those $5000$ sample values for each point $t_i$ are drawn from a Gaussian distribution.

Table 1 of Supplement B \cite{suppB}  provides the $p$-values of the three normality tests for $10$ randomly chosen dyadic points. As we can observe, most of the $p$-values are bigger than $5\%$, meaning we should not reject the null hypothesis (with a significance level of $0.05$) stating that our sample values are drawn from a Gaussian distribution; the $p$-values which are less than $5\%$ are displayed in boldface. Figure 6 of Supplement B \cite{suppB} also gives histograms and Q-Q plots of the $5000$ sample values of $Y_{mix}(t)$ at another two (randomly chosen dyadic) points, $t = 102/2^{15}$ and $6653/2^{15}$.

Despite the finite-dimensional distributions of the fake Brownian process $Y_{mix}$ are not \textit{theoretically} Gaussian, the above simulation study concludes that they cannot be \textit{statistically} distinguished from a Gaussian marginal. In practice, when observing entire sample trajectories (or some discrete sample signals) drawn from an unknown process, we can check some path properties such as \textit{(BM i, iv, vi)} and compute moments \textit{(BM ii, iii)} to find out the distribution of the unknown process. However, our construction method suggests that it would be difficult to distinguish real Brownian motion from those non-Gaussian fake processes.

\begin{remark} [Faking higher order moments] \label{rem. third, fourth moment}
When constructing the fake fractional processes in Theorem \ref{thm.fake-BM}, we only matched the first two moments of the Schauder coefficients as those of Brownian motion. Nevertheless, most of the $p$-values in Table 1 of Supplement B \cite{suppB} are quite big for the Jacque-Bera test, which is a goodness-of-fit test of whether sample data have the third and fourth moments (skewness and kurtosis) matching a normal distribution. If we want to further mimic those higher-order moments, we may choose Schauder coefficients as random variables with the same moments up to higher-order as the standard normal distribution,  due to Theorem \ref{thm. finite moments} later. For example, a discrete random variable
\begin{equation*}
	R \sim
	\begin{cases}
		&~~\sqrt{3}, \qquad \text{with probability } 1/6,
		\\
		&-\sqrt{3}, \qquad \text{with probability } 1/6,
		\\
		&~~~ 0, ~~ \qquad \text{with probability } 2/3,
	\end{cases}        
\end{equation*}
has the same first four moments with the standard normal distribution and satisfies the conditions of Theorem \ref{thm.fake-BM}.
\end{remark}

\section{Fake fractional Brownian motion} \label{sec. mimicking fBM}

A fractional Brownian motion~(fBM) $B^H$, defined on $[0, 1]$, with Hurst index $H \in (0, 1)$ has the following Schauder representation along a finitely refining partition sequence $\pi$ from Proposition \ref{prop:coeff_hat_func}:
\begin{equation}    \label{eq. non-standard normal coefficients}
B^H(t) = Zt+ \sum_{m=0}^{\infty} \sum_{k \in I_m} \theta^{B^H}_{m, k} e^{\pi}_{m, k}(t), \qquad t \in [0, 1],
\end{equation}
where $Z = B^H(1)$ and $\theta^{B^H}_{m, k}$ are normally distributed with mean zeros and variance equal to $1$ and $\sigma^2_{m, k}$, respectively, with the notation
{\footnotesize
\begin{equation}    \label{eq.fbm.theta.var}
    \sigma^2_{m, k} := \frac{(\dII)^2 (\dI)^{2H} + (\dI)^2 (\dII)^{2H} - \dI \dII \Big[ (\dI+\dII)^{2H}-(\dI)^{2H}-(\dII)^{2H} \Big] }{\dI \dII (\dI + \dII)}.
\end{equation}}
Here and in what follows, we shall denote $$\dI := t^{m, k}_2 - t^{m, k}_1 \text{ and } \dII := t^{m, k}_3 - t^{m, k}_2,$$ where the support of $e^{\pi}_{m, k}$ is $[t^{m, k}_1, t^{m, k}_3]$ and its maximum is attained at $t^{k, n}_2$. We shall also use the notations $B^H\{\dI\} := B^H(t^{m, k}_2)-B^H(t^{m, k}_1)$ and $B^H\{\dII\} := B^H(t^{m, k}_3)-B^H(t^{m, k}_2)$.

Since the increments of fBM are not independent (except for the case $H = 1/2$), as 
a result the coefficients $\theta^{B^H}_{m, k}$ are not independent. Again from the expression \eqref{eq.theta.coeff}, we can compute the covariance between two Schauder coefficients:
\begin{align}
&\sigma_{m, k, m', k'} := \mathbb{E} [\theta^{B^H}_{m, k} \theta^{B^H}_{m', k'}] \label{eq.fbm.theta.cov}
\\
&= \frac{\dII \dIIp \xi^{m, k, m', k'}_{1, 1} + \dI \dIp \xi^{m, k, m', k'}_{2, 2} - \dI \dIIp \xi^{m, k, m', k'}_{2, 1}- \dIp \dII \xi^{m, k, m', k'}_{1, 2}}{\sqrt{\dI \dII \dIp \dIIp (\dI + \dII)(\dIp + \dIIp)}}       \nonumber
\end{align}
where
\begin{align*}
\xi^{m, k, m', k'}_{1, 1} &:= \mathbb{E} \big[ B^H\{\dI\} B^H\{\dIp\} \big],
\qquad
\xi^{m, k, m', k'}_{2, 2} := \mathbb{E} \big[ B^H\{\dII\} B^H\{\dIIp\} \big],
\\
\xi^{m, k, m', k'}_{2, 1} &:= \mathbb{E} \big[ B^H\{\dII\} B^H\{\dIp\} \big],
\qquad 
\xi^{m, k, m', k'}_{1, 2} := \mathbb{E} \big[ B^H\{\dI\} B^H\{\dIIp\} \big].
\end{align*}
Now that the covariance between the increments of fBM is given as
\begin{align*}
&\quad 2\mathbb{E} \Big[ \big(B^H(t) - B^H(s)\big) \big(B^H(v) - B^H(u)\big) \Big]
\\
&= \mathbb{E} \Big[ \big(B^H(t) - B^H(u)\big)^2 + \big(B^H(s) - B^H(v)\big)^2 - \big(B^H(t) - B^H(v)\big)^2 - \big(B^H(s) - B^H(u)\big)^2 \Big]
\\
&= \vert t-u \vert^{2H} + \vert s-v \vert^{2H} - \vert t-v \vert^{2H} - \vert s-u\vert^{2H}
\end{align*}
for any $s, t, u, v \in [0, 1]$, we can derive the explicit expressions
{\footnotesize
\begin{align}
\xi^{m, k, m', k'}_{1, 1} &= \frac{1}{2} \Big( \vert t^{m, k}_1 - t^{m', k'}_2 \vert^{2H} + \vert t^{m, k}_2 - t^{m', k'}_1 \vert^{2H} - \vert t^{m, k}_1 - t^{m', k'}_1 \vert^{2H} - \vert t^{m, k}_2 - t^{m', k'}_2 \vert^{2H} \Big),  \label{def. xi}
\\
\xi^{m, k, m', k'}_{2, 2} &= \frac{1}{2} \Big( \vert t^{m, k}_2 - t^{m', k'}_3 \vert^{2H} + \vert t^{m, k}_3 - t^{m', k'}_2 \vert^{2H} - \vert t^{m, k}_2 - t^{m', k'}_2 \vert^{2H} - \vert t^{m, k}_3 - t^{m', k'}_3 \vert^{2H} \Big), \nonumber
\\
\xi^{m, k, m', k'}_{2, 1} &= \frac{1}{2} \Big( \vert t^{m, k}_2 - t^{m', k'}_2 \vert^{2H} + \vert t^{m, k}_3 - t^{m', k'}_1 \vert^{2H} - \vert t^{m, k}_2 - t^{m', k'}_1 \vert^{2H} - \vert t^{m, k}_3 - t^{m', k'}_2 \vert^{2H} \Big), \nonumber
\\
\xi^{m, k, m', k'}_{1, 2} &= \frac{1}{2} \Big( \vert t^{m, k}_1 - t^{m', k'}_3 \vert^{2H} + \vert t^{m, k}_2 - t^{m', k'}_2 \vert^{2H} - \vert t^{m, k}_1 - t^{m', k'}_2 \vert^{2H} - \vert t^{m, k}_2 - t^{m', k'}_3 \vert^{2H} \Big).	\nonumber
\end{align}}
Plugging these expressions back into \eqref{eq.fbm.theta.cov}, we obtain the expression of the covariance between Schauder coefficients $\theta^{B^H}_{m, k}$ and $\theta^{B^H}_{m', k'}$, solely in terms of the partition points $t^{m, k}_i$, $t^{m', k'}_i$ for $i = 1, 2, 3$. In particular, when $(m, k) = (m', k')$, we retrieve the variance $\sigma^2_{m, k}$ of $\theta^{B^H}_{m, k}$ as in \eqref{eq.fbm.theta.var}. 

\begin{remark}
If we denote $e^{\pi}_{-1, 0}(t)$ the linear function $t \mapsto t$ on $[0, 1]$ and $\theta^{B^H}_{-1, 0} = B^H(1)$ with convention $I_{-1} = \{0\}$, the Schauder representation of fBM in \eqref{eq. non-standard normal coefficients} can be written as 
\begin{equation*}
	B^H(t) = \sum_{m=-1}^{\infty} \sum_{k \in I_m} \theta^{B^H}_{m, k} e^{\pi}_{m, k}(t), \qquad t \in [0, 1].
\end{equation*}
Then, the covariances between $Z = B^H(1)$ and the other coefficients $\{\theta^{B^H}_{m, k}\}_{m \ge 0, k \in I_m}$ in \eqref{eq. non-standard normal coefficients} can be computed in the same manner as above:
\begin{align}\label{eq.fbm.theta.cov Z}
	\sigma_{-1, 0, m, k} := \mathbb{E}[Z \;\theta^{B^H}_{m, k}] = &\frac{\dII\Big( |1-t^{m, k}_1|^{2H} + |t^{m, k}_2|^{2H} - |1-t^{m, k}_2|^{2H} - |t^{m, k}_1|^{2H} \Big)}{2\sqrt{\dI \dII (\dI + \dII)}} \nonumber   
	\\
	& \quad - \frac{\dI \Big( |1-t^{m, k}_2|^{2H} + |t^{m, k}_3|^{2H} - |1-t^{m, k}_3|^{2H} - |t^{m, k}_2|^{2H} \Big) }{2\sqrt{\dI \dII (\dI + \dII)}}.  
\end{align}
However, adding or subtracting a linear term $Zt$ from a Schauder representation does not affect the roughness properties we are interested in (e.g. continuity of path, critical H\"older exponent, $(1/H)$-th variation, etc), so we shall focus on faking $B^H(t) - Zt = \sum_{m=0}^{\infty} \sum_{k \in I_m} \theta^{B^H}_{m, k} e^{\pi}_{m, k}(t)$ such that the processes start at the origin and end at the point $(1, 0)$, as we did in Section \ref{subsec. examples BM}.
\end{remark}

When $\pi$ is the dyadic partition $\mathbb{T}$, the above expressions \eqref{eq.fbm.theta.var}, \eqref{eq.fbm.theta.cov} of variance and covariance of the Schauder coefficients reduce to a relatively simpler form:
\begin{align}
\Sigma^2_{m, k} := \mathbb{E} [(\theta^{B^H, \mathbb{T}}_{m, k})^2] &= \vert 2-2^{2H-1} \vert \times 2^{(m+1)(1-2H)}, \label{eq.var.fBM}
\\
\Sigma_{m, k, m', k'} := \mathbb{E} [\theta^{B^H, \mathbb{T}}_{m, k} \theta^{B^H, \mathbb{T}}_{m', k'}] &= 2^{\frac{1}{2}(m+m')} \big(\xi^{m, k, m', k'}_{1, 1}+\xi^{m, k, m', k'}_{2, 2}-\xi^{m, k, m', k'}_{2, 1}-\xi^{m, k, m', k'}_{1, 2} \big),    \label{eq.cov.fBM}
\end{align}
with the dyadic partition points in the definitions of $\xi$'s in \eqref{def. xi}
\begin{align*}
t^{m, k}_1 = \frac{2k}{2^{m+1}}, \quad
&t^{m, k}_2 = \frac{2k+1}{2^{m+1}}, \quad
t^{m, k}_3 = \frac{2k+2}{2^{m+1}}, \\
t^{m', k'}_1 = \frac{2k'}{2^{m'+1}}, \quad
&t^{m', k'}_2 = \frac{2k'+1}{2^{m'+1}}, \quad
t^{m', k'}_3 = \frac{2k'+2}{2^{m'+1}}.
\end{align*}

\subsection{Construction of fake fractional Brownian processes}    \label{subsec. mimicking fBM}

Similar to Section \ref{sec. mimicking BM}, we shall replace Gaussian Schauder coefficients with other random variables to construct a non-Gaussian stochastic process $Y^H$ for any given Hurst index $H \in (0, 1)$, which also satisfies the following properties of fBM $B^H$ with the same index $H$.

\begin{theorem}[Construction of fake fBM]    \label{thm.fake-fBM}
For a balanced and complete refining sequence of partition $\pi$ of $[0,1]$ and a fixed $H \in (0, 1)$, consider the stochastic process $Y^H$ defined on $[0, 1]$ via Schauder representation along $\pi$
\begin{equation*}
	Y^H(t) = \sum_{m=0}^{\infty} \sum_{k \in I_m} \theta^{Y^H}_{m,k} e^{\pi}_{m,k}(t),
\end{equation*}
where $\{\theta_{m,k}^{Y^H}\}$ are random variables satisfying
\begin{enumerate} [label=(\alph*)]
	\item $\mathbb{E}[\theta^{Y^H}_{m,k}] = 0$ and $\mathbb{E}[\theta^{Y^H}_{m,k} \theta^{Y^H}_{m',k'}] = \sigma_{m, k, m', k'}$ of \eqref{eq.fbm.theta.cov},
	\item there exist some positive constants $C_1$ and $C_2$ such that the inequality
	\begin{equation}    \label{con.theta bound delta H}
		\mathbb{P} \Big[ \vert \theta^{Y^H}_{m, k}\vert  \vert\pi^{m+1} \vert^{ \frac{1}{2}-H} > \delta  \Big]
		\le C_1 \exp \Big( -\frac{\delta^2}{2C_2} \Big)
	\end{equation}
	holds for every $\delta > 0$, $k \in I_m$, and $m \ge 0$.
\end{enumerate}
Then, the process $Y^H$ satisfies the following properties (fBM i-vi).
\begin{enumerate} [label=(fBM \roman*)]
	\item $Y^H$ has a continuous path almost surely.
	\item $\mathbb{E}[Y^H(t)] = 0$ and $2 \mathbb{E}[Y^H(t) Y^H(s)] = |t|^{2H}+|s|^{2H}-|t-s|^{2H}$ for any $t, s \in [0, 1]$.
	\item Increments of $Y^H$ over disjoint intervals are correlated (except for the case $H = 1/2)$:
\[2\mathbb{E}[(Y^H(t)-Y^H(s))(Y^H(v)-Y^H(u))] = \vert t-u \vert^{2H} + \vert s-v \vert^{2H} - \vert t-v \vert^{2H} - \vert s-u\vert^{2H}\]
	for every $0 \le s \le t \le u \le v \le 1$.
	\item Almost every path of $Y^H$ belongs to $C^{H-}([0, 1], \mathbb{R})$. 
	\item The family of Schauder coefficients $(\theta_{m, k}^{Y^H})$ has the same first two moments as the family $(\theta_{m, k}^{B^H})$ of real fBM.
	\item If the partition sequence $\pi$ consists of uniform partitions (i.e. $\forall \;n \in \mathbb{N}; \;|\pi_n| = \underline{\pi^n}$   such as dyadic, triadic partitions), we have the following convergence of the scaled quadratic variation along $\pi$ for every $t \in [0, 1]$:
	\begin{equation}    \label{eq. scaled qv convergence}
		\lim_{n \rightarrow \infty} \mathbb{E} \bigg[ \Big\lfloor \frac{\vert \pi^n \vert}{t} \Big\rfloor^{1-2H} \sum_{\substack{t^n_i, t^n_{i+1} \in \pi^n \\ t^n_{i+1} \le t}} \big|Y^H(t^n_{i+1})-Y^H(t^n_{i})\big|^2 \bigg] = t^{2H}.
	\end{equation}
\end{enumerate}
\end{theorem}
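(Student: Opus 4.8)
The plan is to settle the six properties by leaning on the machinery of Sections \ref{sec. Schauder representation, review}--\ref{sec. Holder and Schauder}, reserving the real work for (fBM ii) and (fBM vi). Three of the properties need no new idea. Condition (b) is exactly condition (ii) of Lemma \ref{lem: continuity condition} with $\epsilon = H$, and since $\pi$ is balanced and complete refining, that lemma gives almost-sure uniform convergence of the partial sums to a continuous $Y^H$, which is (fBM i); feeding the same condition into Corollary \ref{cor. Holder continuity} (again with $\epsilon = H$) yields $Y^H \in C^{\alpha}$ almost surely for every $\alpha < H$, i.e.\ (fBM iv). Property (fBM v) is immediate from (a), which states precisely that $\mathbb{E}[\theta^{Y^H}_{m,k}] = 0 = \mathbb{E}[\theta^{B^H}_{m,k}]$ and $\mathbb{E}[\theta^{Y^H}_{m,k}\theta^{Y^H}_{m',k'}] = \sigma_{m,k,m',k'} = \mathbb{E}[\theta^{B^H}_{m,k}\theta^{B^H}_{m',k'}]$.

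For (fBM ii) I would first extract from (b) a uniform second-moment bound $\mathbb{E}[(\theta^{Y^H}_{m,k})^2]\lesssim |\pi^{m+1}|^{2H-1}$, by integrating the sub-Gaussian tail of $|\theta^{Y^H}_{m,k}|\,|\pi^{m+1}|^{\frac12-H}$. The mean is then $\mathbb{E}[Y^H(t)] = \sum_{m,k}\mathbb{E}[\theta^{Y^H}_{m,k}]\,e^{\pi}_{m,k}(t) = 0$, the interchange being justified by the $L^1$-summability $\sum_m |\pi^{m+1}|^{H-\frac12}\cdot M|\pi^{m+1}|^{\frac12}=\sum_m M|\pi^{m+1}|^{H}<\infty$ from \eqref{ineq.boundofsumofe.pi} and Lemma \ref{lemma.sumability}. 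For the covariance I would work with the level-$n$ truncations $Y^H_n(t)=\sum_{m=0}^{n}\sum_{k}\theta^{Y^H}_{m,k}e^{\pi}_{m,k}(t)$, for which bilinearity gives the finite sum $\mathbb{E}[Y^H_n(t)Y^H_n(s)] = \sum_{m,m'\le n}\sum_{k,k'}\sigma_{m,k,m',k'}\,e^{\pi}_{m,k}(t)\,e^{\pi}_{m',k'}(s)$. Two ingredients finish the job: this is identically the covariance of the level-$n$ truncation of the fBM Schauder expansion, since those coefficients carry the same covariances $\sigma_{m,k,m',k'}$; and $Y^H_n\to Y^H$ in $L^2$, obtained by bounding $\mathbb{E}[(Y^H(t)-Y^H_n(t))^2]$ via Cauchy--Schwarz $|\sigma_{m,k,m',k'}|\le\sqrt{\sigma^2_{m,k}\sigma^2_{m',k'}}$, the variance bound above, \eqref{ineq.boundofsumofe.pi}, and the tail estimate $\big(\sum_{m>n}|\pi^{m+1}|^{H}\big)^2\to 0$ of Lemma \ref{lemma.sumability}. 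Passing to the limit identifies $\mathbb{E}[Y^H(t)Y^H(s)]$ with the fBM covariance $\tfrac12\big(|t|^{2H}+|s|^{2H}-|t-s|^{2H}\big)$.

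Property (fBM iii) then follows mechanically from (fBM ii): polarization gives $2\big(Y^H(t)-Y^H(s)\big)\big(Y^H(v)-Y^H(u)\big) = \big(Y^H(t)-Y^H(u)\big)^2 + \big(Y^H(s)-Y^H(v)\big)^2 - \big(Y^H(t)-Y^H(v)\big)^2 - \big(Y^H(s)-Y^H(u)\big)^2$, and each squared increment has mean $\mathbb{E}[(Y^H(a)-Y^H(b))^2]=|a-b|^{2H}$ (a direct consequence of (fBM ii)), reproducing the claimed increment covariance. For (fBM vi), the uniform-partition hypothesis $|\pi^n|=\underline{\pi^n}$ makes every increment length equal to $|\pi^n|$, so by linearity of expectation the unscaled expected sum up to $t$ is exactly $N_n(t)\,|\pi^n|^{2H}$, where $N_n(t)=\lfloor t/|\pi^n|\rfloor$ counts the intervals with right endpoint at or below $t$. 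Multiplying by the prefactor $N_n(t)^{2H-1}$ of \eqref{eq. scaled qv convergence} collapses this to $\big(N_n(t)\,|\pi^n|\big)^{2H}$, and since $N_n(t)\,|\pi^n|\to t$ as the mesh vanishes, the limit is $t^{2H}$; note that this is an exact expectation computation, so no almost-sure or in-probability convergence is needed here.

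The main obstacle is the covariance step (fBM ii): the only substantive technical point is justifying the interchange of expectation with the double infinite sum, i.e.\ the $L^2$-convergence of the truncations, which requires converting the Gaussian-type tail bound (b) into the summable second-moment control above and combining it with the Schauder-function bounds and Lemma \ref{lemma.sumability}. Everything else is either a direct citation of the earlier lemmas or an elementary manipulation built on (fBM ii).
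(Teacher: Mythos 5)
Your proposal is correct and follows essentially the same route as the paper: (fBM i), (iv), (v) via Lemma \ref{lem: continuity condition}(ii) and Corollary \ref{cor. Holder continuity} with $\epsilon=H$, (fBM iii) by polarization from (fBM ii), and (fBM vi) by the same exact expectation count $N_n(t)\,|\pi^n|^{2H}$. The only cosmetic difference is in (fBM ii): the paper delegates the interchange of expectation and double sum to Theorem \ref{thm. finite moments} together with Proposition \ref{prop. finiteness condition} (whose proof is exactly your absolute-summability estimate $|\sigma_{m,k,m',k'}|\le\sigma_{m,k}\sigma_{m',k'}\lesssim|\pi^{m+1}|^{H-\frac12}|\pi^{m'+1}|^{H-\frac12}$ combined with \eqref{ineq.boundofsumofe.pi}), whereas you re-derive that $\ell=2$ case by hand via $L^2$-convergence of the truncations.
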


\begin{remark}  \label{rem. critical Holder fBM}
Property (fBM iv) can also be strengthened as the following under an additional assumption on the Schauder coefficients. Suppose that there exists a positive constant $C$ satisfying
\begin{equation*}
	\mathbb{P} \Big[ \sup_{m, k} \Big( |\theta^{Y^H}_{m, k}(\omega)||\pi^{m+1}|^{\frac{1}{2}- H} \Big) \le C \Big] = 1,
\end{equation*}
then, $Y^H$ of Theorem \ref{thm.fake-fBM} satisfies

\noindent (fBM iv') \textit{Almost every path of $Y^H$ has the critical H\"older exponent $H$.}
\end{remark}

\begin{remark} [Scaled quadratic variation]
In the convergence \eqref{eq. scaled qv convergence}, the expression $\left\lfloor \frac{\vert \pi^n \vert}{t} \right\rfloor$ represents the number of partitions points of $\pi^n$ on the interval $[0, t]$, which is just the reciprocal of the number of summands in the next summation, thus the left-hand side can be interpreted as an averaged scaled quadratic variation over the uniform partitions.

In the case of real fBM $B^H$, we have a stronger mode of convergence than the one in \eqref{eq. scaled qv convergence}:
\begin{equation*}
	\Big\lfloor \frac{\vert \pi^n \vert}{t} \Big\rfloor^{1-2H} \sum_{\substack{t^n_i, t^n_{i+1} \in \pi^n \\ t^n_{i+1} \le t}} \big\vert B^H(t^n_{i+1})-B^H(t^n_{i})\big\vert^2 \xlongrightarrow{n \rightarrow  \infty} t^{2H} \quad \text{in probability},
\end{equation*}
which can be proven by the self-similarity, stationary increments property of fBM, and the ergodic theorem (see \cite[Chapter ~1.18]{Mishura:book} or \cite{Rogers:1997} for the proof along a uniform partition sequence, but it can be easily generalized to other uniform refining sequences of partitions such as dyadic). This convergence is one of the three characteristics of fBM, studied in \cite{Mishura:Valkeila} (as an extension of the L\'evy's characterization to fBM). However, for the process $Y^H$ we construct in the following example, its increments are not stationary (but covariance stationary or weak-sense stationary) and the self-similarity is not guaranteed. Thus, we stated the weaker convergence of the form \eqref{eq. scaled qv convergence} in Theorem \ref{thm.fake-fBM}; nonetheless, we observe in Figures 7-9 of Supplement B \cite{suppB} that the fake fBMs in the next subsection exhibit this stronger convergence.
\end{remark}

\begin{proof}
We show each property from \textit{(fBM i)} to \textit{(fBM vi)} one by one.

\textbf{(fBM i)} Condition (b) is from condition (ii) of Lemma~\ref{lem: continuity condition}, for $\epsilon = H$, thus $Y^H$ has a continuous path almost surely.

\textbf{(fBM ii)} Proposition \ref{prop. finiteness condition} proves that the family of coefficients $(\theta^B_{m, k})$ (of real fBM) satisfies the condition \eqref{ineq. moment bounds} for $\ell = 2$. Furthermore, Cauchy-Schwarz inequality and condition (a) deduce
\begin{equation*}
	\mathbb{E} \big| \theta^{Y^H}_{m_1, k_1} \theta^{Y^H}_{m_2, k_2} \big| \le \sqrt{\mathbb{E}[(\theta^{Y^H}_{m, k})^2] \mathbb{E}[(\theta^{Y^H}_{m', k'})^2]}
	= \sigma_{m, k}  \sigma_{m', k'},
\end{equation*}
and the argument in the proof of Proposition \ref{prop. finiteness condition} yields that the condition \eqref{ineq. moment bounds} for $\ell = 2$ is satisfied for the family $(\theta^{Y^H}_{m, k})$ as well. Theorem~\ref{thm. finite moments} now proves the property, since we matched the mean and covariance of the two families of Schauder coefficients $(\theta^{B^H}_{m, k})$ and $(\theta^{Y^H}_{m, k})$ in condition (a).

\textbf{(fBM iii)} This is again straightforward from \textit{(fBM ii)}.

\textbf{(fBM iv)}
This follows from Corollary \ref{cor. Holder continuity}.

\textbf{(fBM iv') of Remark \ref{rem. critical Holder fBM}}
We have the bounds
\begin{equation*}
	\sup_{m,k} \big(|\theta^{Y^H}_{m,k}| |\pi^{m+1}|^{\frac{1}{2}-\alpha} \big)
	\le C \sup_{m} \big( |\pi^{m+1}|^{H-\frac{1}{2}} |\pi^{m+1}|^{\frac{1}{2}-\alpha} \big)
	= C \sup_{m} |\pi^{m+1}|^{H-\alpha},
\end{equation*}
where the last term is finite if and only if $\alpha \le H$, therefore Theorem \ref{main.thm} proves \textit{(fBM iv')}.

\textbf{(fBM v)} This is obvious from the construction of $Y^H$.

\textbf{(fBM vi)} From \textit{(fBM ii)}, the first two moments of $Y^H$ coincide with that of fBM $B^H$. Thus, we have
\begin{equation*}
	\mathbb{E} \big\vert Y^H(t^n_{i+1}) - Y^H(t^n_i) \big\vert^2 
	= \mathbb{E} \big\vert B^H(t^n_{i+1}) - B^H(t^n_i) \big\vert^2
	= \vert t^n_{i+1} - t^n_{i} \vert^{2H}
	= \vert \pi^n \vert^{2H},
\end{equation*}
for any consecutive partition points $t^n_{i}, t^n_{i+1} \in \pi^n$. The expectation on the left-hand side of \eqref{eq. scaled qv convergence} is now equal to
\begin{equation*}
	\Big\lfloor \frac{\vert \pi^n \vert}{t} \Big\rfloor^{1-2H} \sum_{\substack{t^n_i, t^n_{i+1} \in \pi^n \\ t^n_{i+1} \le t}} \vert \pi^n \vert^{2H}
	= \Big\lfloor \frac{\vert \pi^n \vert}{t} \Big\rfloor^{1-2H} \times \vert \pi^n \vert^{2H} \times \Big\lfloor \frac{t}{\vert \pi^n \vert} \Big\rfloor,
\end{equation*}
and the last expression converges to $t^{2H}$ as $n \to \infty$.
\end{proof}

Thanks to Theorem \ref{thm.fake-fBM}, we have the following L\'evy type construction of fractional Brownian Motions.

\begin{corollary}   \label{cor: real fBM}
For a balanced and complete refining sequence of partition $\pi$ of $[0,1]$ and a fixed $H \in (0, 1)$, the stochastic process $Z^H$ defined by the Schauder representation along $\pi$
\begin{equation*}
	Z^H(t) = \sum_{m=0}^{\infty} \sum_{k \in I_m} \theta^{Z^H}_{m,k} e^{\pi}_{m,k}(t), \qquad t \in [0, 1],
\end{equation*}
where the coefficients $\{\theta_{m,k}^{Z^H}\}$ are normally distributed with mean zero and covariance given as \eqref{eq.fbm.theta.cov}, is fractional Brownian motion with Hurst index $H$ up to indistinguishability.
\end{corollary}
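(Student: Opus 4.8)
The plan is to reduce the corollary to Theorem \ref{thm.fake-fBM} and then exploit the extra structure that the coefficients $\theta^{Z^H}_{m,k} = \sigma_{m,k} Z_{m,k}$ are \emph{jointly Gaussian}. First I would check that $Z^H$ satisfies the hypotheses of Theorem \ref{thm.fake-fBM}: condition (a) holds by construction, since $\mathbb{E}[\theta^{Z^H}_{m,k}]=0$ and $\mathbb{E}[\theta^{Z^H}_{m,k}\theta^{Z^H}_{m',k'}] = \sigma_{m,k,m',k'}$; condition (b), the sub-Gaussian tail \eqref{con.theta bound delta H}, is exactly the estimate already carried out in the paragraph preceding the statement, using $\sigma_{m,k} \le K |\pi^{m+1}|^{H-\frac12}$ together with the standard normal tail bound. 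Consequently $Z^H$ inherits all of (fBM i)--(vi); in particular its sample paths are continuous almost surely (fBM i), and it is centered with $2\mathbb{E}[Z^H(t)Z^H(s)] = |t|^{2H}+|s|^{2H}-|t-s|^{2H}$ (fBM ii), the covariance of fractional Brownian motion.

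The key additional step, and the one not available for the genuinely fake process $Y^H$, is to promote $Z^H$ to a \emph{Gaussian} process. Since $\{Z_{m,k}\}$ is by hypothesis a Gaussian family, so is $\{\theta^{Z^H}_{m,k}\}$, and therefore for any finite set of times $t_1, \dots, t_d$ and any level $n$, the vector of partial sums
\begin{equation*}
	\Big( \sum_{m=0}^{n}\sum_{k \in I_m} \theta^{Z^H}_{m,k} e^{\pi}_{m,k}(t_1), \; \dots, \; \sum_{m=0}^{n}\sum_{k \in I_m} \theta^{Z^H}_{m,k} e^{\pi}_{m,k}(t_d) \Big)
\end{equation*}
is a linear image of a Gaussian vector, hence Gaussian. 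By (fBM i) these partial sums converge almost surely (uniformly in $t$) as $n \to \infty$, so $(Z^H(t_1), \dots, Z^H(t_d))$ is an almost-sure limit of Gaussian vectors and is therefore itself Gaussian. Thus all finite-dimensional marginals of $Z^H$ are Gaussian, i.e., $Z^H$ is a centered Gaussian process.

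Finally I would identify the law. A centered Gaussian process is determined up to law by its covariance function, and by (fBM ii) the covariance of $Z^H$ coincides with that of fractional Brownian motion with Hurst index $H$; hence $Z^H$ has the same finite-dimensional distributions as $B^H$. Since $Z^H$ has continuous paths by (fBM i), these finite-dimensional distributions determine its law on $C([0,1],\mathbb{R})$, so $Z^H$ is a continuous centered Gaussian process with the fractional Brownian covariance, which is precisely the defining property of fBM. The qualifier ``up to indistinguishability'' reflects the standard fact that, given its law, the continuous modification realizing an fBM is unique only up to indistinguishability; concretely, the series defining $Z^H$ \emph{is} a valid Schauder representation in the sense of Proposition \ref{prop:coeff_hat_func}, so $Z^H$ coincides, for every $t$ almost surely, with the fBM it reconstructs.

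The main obstacle I anticipate is the Gaussianity step rather than the moment computations: one must argue carefully that Gaussianity of the \emph{infinite} series is genuinely inherited from that of the coefficients. This is precisely where the almost-sure uniform convergence furnished by (fBM i) is essential, since it permits passage from the manifestly Gaussian partial sums to the limit via closure of the class of Gaussian vectors under almost-sure convergence; in contrast, for the non-Gaussian fake process $Y^H$ of Theorem \ref{thm.fake-fBM} no such upgrade is possible, which is exactly what separates faking fBM from reproducing it.
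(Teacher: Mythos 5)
Your proof follows essentially the same route as the paper: you verify conditions (a) and (b) of Theorem \ref{thm.fake-fBM} (the paper does this in the paragraph immediately preceding the corollary, using the bound $\sigma_{m,k} \le K\vert\pi^{m+1}\vert^{H-\frac{1}{2}}$ and the Gaussian tail estimate), deduce properties (fBM i)--(vi), and then conclude by the fact that a continuous centred Gaussian process is determined up to law by its covariance function. The only difference is that you spell out the step the paper leaves implicit --- that joint Gaussianity of the coefficient family passes to the finite-dimensional marginals of $Z^H$ because they are almost-sure limits of the Gaussian partial-sum vectors --- which is a correct and worthwhile clarification rather than a different argument.
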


\begin{proof}
    A fBM $B^H$ with Hurst index $H$ admits the Schauder representation in \eqref{eq. non-standard normal coefficients}. Since the first two moments uniquely determine the distribution of any Gaussian processes, \textit{(fBM i)} and \textit{(fBM ii)} are sufficient conditions for $Y^H$ in Theorem \ref{thm.fake-fBM} to be real fBM, provided that $Y^H$ is a Gaussian process (see  \cite[Definition ~1.1.1]{Biagini_Hu_Oksendal_Zhang}). Therefore, we provide the following converse statement.
Suppose that we have a sequence of (correlated) standard normal random variables $\{Z_{m, k}\}$ such that the family $\big\{\theta^{Z^H}_{m, k} := \sigma_{m, k}Z_{m, k}\big\}$ has covariance
$\mathbb{E}\big[\theta^{Z^H}_{m, k} \theta^{Z^H}_{m', k'}\big] = \sigma_{m, k, m', k'}$ as in condition (a) of Theorem \ref{thm.fake-fBM}. From the bound $\sigma_{m, k} \le K |\pi^{m+1}|^{H-\frac{1}{2}}$ for some constant $K > 0$, the condition \eqref{con.theta bound delta H} is also satisfied:
\begin{equation*}
\mathbb{P} \Big[ \vert \theta^{Z^H}_{m, k}\vert  \vert\pi^{m+1} \vert^{\frac{1}{2}-H} > \delta  \Big]
\le
\mathbb{P} \Big[ K |Z_{m, k}| > \delta \Big]
\le \exp \Big( -\frac{\delta^2}{2K^2} \Big), \quad \forall \, \delta > 0, ~ k \in I_m, ~ m \ge 0.
\end{equation*}
The result now follows from Theorem \ref{thm.fake-fBM}.
\end{proof}

\subsection{Examples} \label{subsec. examples fBM}

\begin{example} [Coefficients with Uniform and Beta distributions]\label{ex. all fake fBM}
As in Examples~\ref{ex. uniform coefficients} and \ref{ex. beta coefficients}, we shall use the uniform random variable $U \sim \text{Uniform}(-\sqrt{3}, \, \sqrt{3})$ and two (centered \& scaled) beta random variables $B_1, B_2$ of \eqref{def.beta.dist} in order to construct fake fBM. For any $H \in (0, \frac{1}{2})\cup (\frac{1}{2},1)$, we recall the expressions in \eqref{eq.var.fBM}, \eqref{eq.cov.fBM} and consider three Schauder representations along the dyadic partition $\mathbb{T}$:
\begin{equation}    \label{all coefficients fBM}
	Y^H_i(t) = \sum_{m=0}^{\infty} \sum_{k \in I_m} \theta^{Y^H_i}_{m, k} e^{\mathbb{T}}_{m, k}(t), \qquad i = 0, 1, 2,
\end{equation}
such that the coefficients $\theta^{Y^H_i}_{m, k}$ are distributed as
\begin{equation*}
	\theta^{Y^H_0}_{m, k} \sim \Sigma_{m, k}U, \qquad 
	\theta^{Y^H_1}_{m, k} \sim \Sigma_{m, k}B_1, \qquad
	\theta^{Y^H_2}_{m, k} \sim \Sigma_{m, k}B_2,
\end{equation*} 
and the covariance between $\theta^{Y^H_i}_{m, k}$ and $\theta^{Y^H_i}_{m', k'}$ is equal to $\Sigma_{m, k, m', k'}$, for each $i = 0, 1, 2$. Then, all three processes $Y^H_i$ satisfy the conditions of Theorem \ref{thm.fake-fBM}. Figure 7 of Supplement B \cite{suppB} provides sample paths of fractional Brownian process $B^H(t)-Zt$ in \eqref{eq. non-standard normal coefficients} and three processes in \eqref{all coefficients fBM}.
\end{example}

\begin{example} [Mixed coefficients]    \label{ex. mixed fake fBM}
To mimic fBM, one can mix normal and uniform random variables as in Example~\ref{ex. mixed coefficients}. Recalling the coefficients $\theta^{Y_{mix}}_{m, k}$ of \eqref{eq.mixed coefficients}, we consider the Schauder representation for a fixed $H \in (0, 1)$ along the dyadic partitions
\begin{equation}    \label{mixed coefficients fBM}
	Y^H_{mix}(t) = \sum_{m=0}^{\infty} \sum_{k \in I_m} \theta^{Y^H_{mix}}_{m, k} e^{\mathbb{T}}_{m, k}(t)
\end{equation}
such that the coefficients $\theta^{Y^H_{mix}}_{m, k}$ are distributed as $\Sigma_{m, k} \theta^{Y_{mix}}_{m, k}$ and covariance between $\theta^{Y^H_{mix}}_{m, k}$ and $\theta^{Y^H_{mix}}_{m', k'}$ is again equal to $\Sigma_{m, k, m', k'}$. Then, $Y^H_{mix}$ satisfies the assumption of Theorem \ref{thm.fake-fBM} and hence has the properties \textit{(fBM i - vi)}. Figure~8 of Supplement B \cite{suppB} provides a sample path of the fake fractional Brownian process $Y^H_{mix}$ with Hurst index $H = 0.25$ up to level $n = 12$, together with their $1/H$-th variation and scaled quadratic variation.
\end{example}

Figure 9 of Supplement B \cite{suppB} provides another sample path of smoother fake fBM in Examples \ref{ex. all fake fBM} and \ref{ex. mixed fake fBM} for $H = 0.75$, where $\mathbb{E}\vert N(0, 1) \vert^{0.75} \approx 0.797$.

\begin{remark} [Methods of simulation]
For simulating and plotting sample paths of Examples in Sections~\ref{sec. mimicking BM} and \ref{sec. mimicking fBM}, we used the R language. Since the Schauder coefficients of the examples in Section~\ref{sec. mimicking BM} are independent, random sample generating functions in R are used when simulating the coefficients. For the examples in Section~\ref{sec. mimicking fBM}, as the Schauder coefficients are correlated, we first constructed a very large copula with the covariance structure given by \eqref{eq.cov.fBM}, and randomly generated samples from the copula with different finite-dimensional distributions (uniform and beta). As this process requires a lot of computations, we only used the dyadic partition sequence and truncated up to level $n = 12$ for simulating fake fBMs, whereas we truncated up to level $n=15$ and considered non-dyadic partition sequence (Example \ref{ex. non-dyadic}) for fake Brownian motions. 
\end{remark}

\subsection{Normality tests on the marginal distributions of fake fBMs}    \label{subsec. normality fBM} 

Even though the fake fBM in the previous subsection are non-Gaussian, we will perform some normality tests as we do in Section \ref{subsec. normality BM}. Likewise before we simulated $5000$ sample paths of the fake process $Y^H_{mix}$ in Example \ref{ex. mixed fake fBM}, defined on support $[0, 1]$ along the dyadic partitions, truncated at a level $n = 12$. We randomly chose $10$ dyadic points $t_i \in (0, 1)$ and took $5000$ sample values $Y^H_{mix}(t_i)$ at each $t_i$ for $i = 1, \cdots, 10$. Then, we did the same three normality tests from section \ref{sec. mimicking BM} to find out whether those $5000$ sample values for each point $t_i$ statistically have Gaussian marginals.

As we can observe from Table 2 of Supplement B \cite{suppB}, all the $p$-values are bigger than $5\%$, meaning we should not reject the null hypothesis (with a significance level of $0.05$) stating that our sample values are drawn from a Gaussian distribution. Figure 10 of Supplement B \cite{suppB} also gives histograms and Q-Q plots of the $5000$ sample values of $Y^H_{mix}(t)$ at another two (randomly chosen dyadic) points, $t = 1240/2^{12}$ and $3881/2^{12}$. Despite the finite-dimensional distributions of the fake fBMs $Y^H_{mix}$ are not \textit{theoretically} Gaussian, the simulation study concludes that commonly used statistical tests fail to identify the fake process as non-Gaussian.

\section{Moments of Schauder coefficients}  \label{sec. moment}

In the previous sections, we only matched the first two moments of the Schauder coefficients of the fake processes with that of real fBMs in order the fake processes to have the property (fBM ii), namely, the first two (joint) moments coincide with that of fBMs. The following result generalizes this to higher-order moments. The following theorem shows that the finite (joint) moments of Schauder coefficients uniquely determine the finite (joint) moments of the process up to the same order.

\begin{theorem} [Matching moments] \label{thm. finite moments}
Consider the Schauder representations of two processes $X$ and $Y$, defined on $[0, 1]$, along the same finitely refining partition sequence $\pi$:
\begin{equation*}
	X(t) = \sum_{m=0}^{\infty} \sum_{k \in I_m} \theta^{X}_{m, k} e^{\pi}_{m, k}(t),
	\qquad 
	Y(t) = \sum_{m=0}^{\infty} \sum_{k \in I_m} \eta^{Y}_{m, k} e^{\pi}_{m, k}(t).
\end{equation*}
Suppose that the two families of (random) Schauder coefficients $\{\theta^X_{m, k}\}$ and $\{\eta^Y_{m, k}\}$ have the same finite moments up to order $\ell$ for some $\ell \in \mathbb{N}$, i.e.,
the identity 
\begin{equation}    \label{con.matching moment}
	\mathbb{E} \left[ \prod_{i=1}^n \theta^X_{m_i, k_i} \right] = \mathbb{E} \left[ \prod_{i=1}^n \eta^Y_{m_i, k_i} \right]
\end{equation}
holds for every $n \le \ell$ and arbitrary $n$ pairs $(m_i, k_i)_{i = 1}^n$ such that $m_i \in \mathbb{N}_0$, $k_i \in I_{m_i}$. If we have
\begin{align}
	&\sum_{m_1, k_1} \cdots \sum_{m_{\ell}, k_{\ell}} \mathbb{E} \bigg\vert \prod_{j=1}^{\ell} \theta^X_{m_j, k_j} \bigg\vert \bigg( \prod_{j=1}^{\ell} e^{\pi}_{m_j, k_j}(t_j)\bigg) < \infty, \quad \text{ and }	\label{ineq. moment bounds}
	\\
	&\sum_{m_1, k_1} \cdots \sum_{m_{\ell}, k_{\ell}} \mathbb{E} \bigg\vert \prod_{j=1}^{\ell} \eta^Y_{m_j, k_j} \bigg\vert \bigg( \prod_{j=1}^{\ell} e^{\pi}_{m_j, k_j}(t_j)\bigg) < \infty,		\nonumber
\end{align}
for any $\ell$ pairs $(m_i, k_i)_{i = 1}^{\ell}$ with $m_i \in \mathbb{N}_0$, $k_i \in I_{m_i}$ and $\ell$ points $t_1, \cdots, t_{\ell} \in [0, 1]$,
then the finite-dimensional distributions of $X$ and $Y$ have the same finite moments up to order $\ell$, that is,
\begin{equation*}
	\mathbb{E} \Big[ \prod_{j=1}^{\ell} X(t_j) \Big] = \mathbb{E} \Big[ \prod_{j=1}^{\ell} Y(t_j) \Big].
\end{equation*}
In particular, if $\{\theta^X_{m, k}\}$ and $\{\eta^Y_{m, k}\}$ have the same finite moments and the inequalities in \eqref{ineq. moment bounds} hold for every order $\ell \in \mathbb{N}$, then the finite-dimensional distributions of $X$ and $Y$ have the same finite moments for every order.
\end{theorem}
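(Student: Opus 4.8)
The plan is to expand each $\ell$-fold product $\prod_{j=1}^{\ell} X(t_j)$ into a single multiple series indexed by the Schauder levels, interchange the expectation with that series, and then invoke the matching-moment hypothesis \eqref{con.matching moment} term by term. Substituting the Schauder representation of $X$ into every factor and multiplying out gives
\begin{equation*}
	\prod_{j=1}^{\ell} X(t_j) \;=\; \sum_{m_1, k_1} \cdots \sum_{m_{\ell}, k_{\ell}} \left( \prod_{j=1}^{\ell} \theta^X_{m_j, k_j} \right) \left( \prod_{j=1}^{\ell} e^{\pi}_{m_j, k_j}(t_j) \right),
\end{equation*}
together with the analogous identity for $Y$ with $\eta^Y$ in place of $\theta^X$. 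The whole proof reduces to justifying this rearrangement and the exchange of $\mathbb{E}$ with the multiple sum; once these are in place, the conclusion is immediate.

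The key observation is that every Schauder function satisfies $e^{\pi}_{m, k}(\cdot) \ge 0$, so the summands in \eqref{ineq. moment bounds} are nonnegative and Tonelli's theorem yields
\begin{equation*}
	\mathbb{E} \left[ \sum_{m_1, k_1} \cdots \sum_{m_{\ell}, k_{\ell}} \left\vert \prod_{j=1}^{\ell} \theta^X_{m_j, k_j} \right\vert \left( \prod_{j=1}^{\ell} e^{\pi}_{m_j, k_j}(t_j) \right) \right] = \sum_{m_1, k_1} \cdots \sum_{m_{\ell}, k_{\ell}} \mathbb{E} \left\vert \prod_{j=1}^{\ell} \theta^X_{m_j, k_j} \right\vert \left( \prod_{j=1}^{\ell} e^{\pi}_{m_j, k_j}(t_j) \right) < \infty,
\end{equation*}
where finiteness is precisely the first bound in \eqref{ineq. moment bounds}. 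Consequently the nonnegative random variable inside the expectation is finite almost surely, so the multiple series above converges absolutely almost surely. Since each summand factors across the indices and $e^{\pi}_{m,k}\ge 0$, this absolute convergence both legitimizes refactoring the multiple series into the product of the individual (convergent) series $X(t_j)$ and, via Fubini's theorem, permits swapping $\mathbb{E}$ with the multiple summation. I would run the same two steps for $Y$ using the second bound in \eqref{ineq. moment bounds}.

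Combining these, I would obtain
\begin{equation*}
	\mathbb{E} \left[ \prod_{j=1}^{\ell} X(t_j) \right] = \sum_{m_1, k_1} \cdots \sum_{m_{\ell}, k_{\ell}} \mathbb{E} \left[ \prod_{j=1}^{\ell} \theta^X_{m_j, k_j} \right] \left( \prod_{j=1}^{\ell} e^{\pi}_{m_j, k_j}(t_j) \right),
\end{equation*}
apply \eqref{con.matching moment} with $n = \ell$ to replace each $\mathbb{E}[\prod_j \theta^X_{m_j, k_j}]$ by $\mathbb{E}[\prod_j \eta^Y_{m_j, k_j}]$, and then run the Fubini/Tonelli argument in reverse to recognize the resulting series as $\mathbb{E}[\prod_j Y(t_j)]$. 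The final ``in particular'' assertion then follows by applying this identity for each $\ell \in \mathbb{N}$ in turn. The only genuine obstacle is the $\mathbb{E}$-versus-$\sum$ interchange and the attendant absolute convergence of the multiple series; the hypotheses \eqref{ineq. moment bounds} are tailored exactly so that Tonelli delivers both, and everything else is purely algebraic bookkeeping.
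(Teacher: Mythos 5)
Your proposal is correct and follows essentially the same route as the paper's proof: expand the $\ell$-fold product into a multiple series, use the absolute-summability hypothesis \eqref{ineq. moment bounds} to justify interchanging $\mathbb{E}$ with the multiple sum (the paper invokes Fatou plus dominated convergence where you invoke Tonelli/Fubini, which amounts to the same thing here, and your explicit remark that $e^{\pi}_{m,k}\ge 0$ is the reason the hypothesis can be stated without absolute values on the Schauder factors), then apply \eqref{con.matching moment} termwise and reverse the argument for $Y$. No gaps.
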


\begin{proof}
First, the $\ell$-th order moment of $X$ evaluated at the points $0 \le t_1 \le \cdots \le t_{\ell} \le 1$ is well-defined due to Fatou's lemma and \eqref{ineq. moment bounds}:
\begin{align*}
	\mathbb{E} \Big\vert \prod_{j=1}^{\ell} X(t_j) \Big\vert
	&= \mathbb{E} \bigg\vert \sum_{m_1, k_1} \cdots \sum_{m_{\ell}, k_{\ell}} \Big( \prod_{j=1}^{\ell} \theta^X_{m_j, k_j} \Big) \Big( \prod_{j=1}^{\ell} e^{\pi}_{m_j, k_j}(t_j) \Big) \bigg\vert
	\\
	&\le \sum_{m_1, k_1} \cdots \sum_{m_{\ell}, k_{\ell}}  \mathbb{E} \bigg\vert \prod_{j=1}^{\ell} \theta^X_{m_j, k_j} \bigg\vert \bigg( \prod_{j=1}^{\ell} e^{\pi}_{m_j, k_j}(t_j) \bigg) < \infty.
\end{align*}
Thus, we can derive from the above inequality and dominated convergence theorem that
\begin{align*}
	\mathbb{E} \Big[ \prod_{j=1}^{\ell} X(t_j) \Big]
	&= \mathbb{E} \bigg[ \sum_{m_1, k_1} \cdots \sum_{m_{\ell}, k_{\ell}} \Big( \prod_{j=1}^{\ell} \theta^X_{m_j, k_j} \Big) \Big( \prod_{j=1}^{\ell} e^{\pi}_{m_j, k_j}(t_j) \Big) \bigg]
	\\
	&= \sum_{m_1, k_1} \cdots \sum_{m_{\ell}, k_{\ell}} \bigg( \mathbb{E} \Big[ \prod_{j=1}^{\ell} \theta^X_{m_j, k_j} \Big] \Big( \prod_{j=1}^{\ell} e^{\pi}_{m_j, k_j}(t_j) \Big) \bigg).
\end{align*}
Since the last term $\prod_{j=1}^{\ell} e^{\pi}_{m_j, k_j}(t_j)$ is independent of the process $X$ and the $\ell$-th order moment of the coefficients $\{\theta^X_{m_j, k_j}\}_{j=1}^{\ell}$ can be replaced by that of $\{\eta^Y_{m_j, k_j}\}_{j=1}^{\ell}$, we can proceed backwards to show that the right-hand side is equal to $\mathbb{E} \big[ \prod_{j=1}^{\ell} Y(t_j) \big]$.
\end{proof}

We note here that the proofs of the following results in this section are given in Supplement A \cite{suppA}. Next, we show that the (real) fractional Brownian process $B^H(t)-Zt$ of \eqref{eq. non-standard normal coefficients} satisfies the condition \eqref{ineq. moment bounds} in Theorem~\ref{thm. finite moments}.

\begin{proposition}   \label{prop. finiteness condition}
For the Schauder representation \eqref{eq. non-standard normal coefficients} of fBM $B^H$ along a balanced and complete refining partition sequence $\pi$, we have
\begin{equation}    \label{eq. finiteness condition}
	\sum_{m_1, k_1} \cdots \sum_{m_{\ell}, k_{\ell}} \mathbb{E} \bigg\vert \prod_{j=1}^{\ell} \theta^{B^H}_{m_j, k_j} \bigg\vert \bigg( \prod_{j=1}^{\ell} e^{\pi}_{m_j, k_j}(t_j)\bigg) < \infty
\end{equation}
for all $\ell$ pairs $(m_i, k_i)_{i = 1}^{\ell}$ with $m_i \in \mathbb{N}_0$, $k_i \in I_{m_i}$ and $\ell$ points $t_1, \cdots, t_{\ell} \in [0, T]$ for every $\ell \in \mathbb{N}$.
\end{proposition}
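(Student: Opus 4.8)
The plan is to bound the absolute value of the product of the (jointly Gaussian) Schauder coefficients by a product of their marginal standard deviations, and then to exploit the non-negativity and summability of the Schauder functions to factorize the $\ell$-fold sum in \eqref{eq. finiteness condition} into a product of $\ell$ convergent single sums. Throughout, $\ell$ is fixed, so constants may depend on $\ell$.

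First I would reduce the joint moment to marginal quantities. Since $B^H$ is a Gaussian process, the coefficients $\{\theta^{B^H}_{m,k}\}$ are jointly centered Gaussian; in particular each $\theta^{B^H}_{m_j,k_j}$ is marginally normal with mean zero and variance $\sigma^2_{m_j,k_j}$ given by \eqref{eq.fbm.theta.var}. By the generalized H\"older inequality with all exponents equal to $\ell$,
\[
	\mathbb{E}\bigg\vert \prod_{j=1}^{\ell}\theta^{B^H}_{m_j,k_j}\bigg\vert \le \prod_{j=1}^{\ell}\Big(\mathbb{E}\big\vert\theta^{B^H}_{m_j,k_j}\big\vert^{\ell}\Big)^{1/\ell} = C_\ell \prod_{j=1}^{\ell}\sigma_{m_j,k_j},
\]
where $C_\ell := \mathbb{E}|Z|^{\ell} < \infty$ for a standard normal $Z$. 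This is the step I expect to be the only genuinely delicate point: rather than trying to control the full $\ell$-fold covariance structure \eqref{eq.fbm.theta.cov} of the coefficients, I sidestep it entirely by using only their one-dimensional marginals, which is legitimate precisely because the absolute moment of a product is dominated by the product of the individual $L^{\ell}$-norms.

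Next I would insert the marginal variance bound $\sigma_{m,k}\le K|\pi^{m+1}|^{H-\frac12}$ for a constant $K>0$ (valid for balanced and complete refining $\pi$, as recorded in Section \ref{sec. mimicking fBM}). Then the left-hand side of \eqref{eq. finiteness condition} is at most
\[
	C_\ell K^\ell \sum_{m_1,k_1}\cdots\sum_{m_\ell,k_\ell}\prod_{j=1}^{\ell} |\pi^{m_j+1}|^{H-\frac12}\, e^{\pi}_{m_j,k_j}(t_j).
\]
Because each Schauder function is non-negative (it is the tent-shaped function of \eqref{defn.e}, the integral of a Haar function), no absolute values are lost, and the summand is a genuine product over $j$ of factors depending on the disjoint index blocks $(m_j,k_j)$. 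By Tonelli's theorem the multiple sum factorizes as
\[
	C_\ell K^\ell \prod_{j=1}^{\ell}\bigg( \sum_{m=0}^{\infty}\sum_{k\in I_m} |\pi^{m+1}|^{H-\frac12}\, e^{\pi}_{m,k}(t_j)\bigg).
\]

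Finally I would bound each of the $\ell$ single sums uniformly in the point $t_j$. Using the bound \eqref{ineq.boundofsumofe.pi}, namely $\sum_{k\in I_m} e^{\pi}_{m,k}(t_j)\le M|\pi^{m+1}|^{\frac12}$, each inner sum is at most $M\sum_{m=0}^\infty |\pi^{m+1}|^{H}$, which is finite by the second estimate of Lemma \ref{lemma.sumability} applied with $n_0=0$ and $\alpha=H\in(0,1)$, yielding the bound $M K_2^{H} |\pi^1|^{H}<\infty$. Since each of the $\ell$ factors is finite, their product is finite, and therefore \eqref{eq. finiteness condition} holds for every $\ell\in\mathbb{N}$, completing the proof.
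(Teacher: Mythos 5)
Your proposal is correct and follows essentially the same route as the paper's proof: the generalized H\"older inequality to reduce the joint absolute moment to the product $C_\ell\prod_j\sigma_{m_j,k_j}$, the marginal variance bound $\sigma_{m,k}\le K|\pi^{m+1}|^{H-\frac12}$, factorization of the multiple sum, and the bound \eqref{ineq.boundofsumofe.pi} combined with summability of $\sum_m|\pi^{m+1}|^H$. Your version is if anything slightly more careful, since you explicitly justify the factorization via non-negativity of the Schauder functions and Tonelli, and cite Lemma \ref{lemma.sumability} for the final geometric-type bound rather than asserting it.
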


The generic form of the Schauder coefficients of the fake fractional processes in Examples~\ref{ex. all fake fBM} and \ref{ex. mixed fake fBM} are distributed as $\Sigma_{m, k} R_{m, k}$, where each $R_{m, k}$ is either a random variable with compact support or a Gaussian random variable, thus having finite moments of all orders. Therefore, the same argument in the proof of Proposition~\ref{prop. finiteness condition} can be applied to show the same finiteness condition \eqref{eq. finiteness condition} for the fake processes. Theorem~\ref{thm. finite moments} then concludes that if we match the joint moments of the Schauder coefficients up to more high-order (than the second order which we did in the previous subsections) with that of the Schauder coefficients of fBM, then the fake process should have the same finite joint moments as fBMs up to the same order.

Jarque-Bera test measures how the third and fourth moments of given samples are close to those moments of normal distributions. If we match the moments of the Schauder coefficients up to the fourth order as real fBMs, then the first four moments of the fake process will theoretically coincide with those of real fBMs (see Remark \ref{rem. third, fourth moment}).

Proposition \ref{prop. finiteness condition} immediately yields the following convergence result.

\begin{corollary}   \label{cor. fBM covariance convergence}
Along a balanced and complete refining partition sequence $\pi$ of $[0,1]$, let us recall the covariances in \eqref{eq.fbm.theta.cov} and \eqref{eq.fbm.theta.cov Z} between the Schauder coefficients of fBM. The following identity always holds for any $s, t \in [0, T]$:
\begin{equation}    \label{eq.R^H(t, s)}
	\sum_{m=-1}^{n} \sum_{k \in I_m} \sum_{m'=-1}^{n} \sum_{k' \in I_{m'}}\sigma_{m, k, m', k'} e^{\pi}_{m, k}(t)e^{\pi}_{m', k'}(s) \xlongrightarrow{n \rightarrow \infty} \frac{1}{2} (|t|^{2H}+|s|^{2H}-|t-s|^{2H}).
\end{equation}
\end{corollary}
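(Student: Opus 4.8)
The plan is to recognize the left-hand side of \eqref{eq.R^H(t, s)} as a second moment of a truncated fractional Brownian process and then pass to the limit. For each fixed $n$, introduce the level-$n$ Schauder partial sum $B^H_n(t) := \sum_{m=-1}^{n} \sum_{k \in I_m} \theta^{B^H}_{m, k} e^{\pi}_{m, k}(t)$, using the convention $e^{\pi}_{-1, 0}(t) = t$, $\theta^{B^H}_{-1, 0} = Z = B^H(1)$, and $I_{-1} = \{0\}$. Since each $B^H_n$ is a finite sum, bilinearity of expectation together with the definitions $\sigma_{m, k, m', k'} = \mathbb{E}[\theta^{B^H}_{m, k} \theta^{B^H}_{m', k'}]$ in \eqref{eq.fbm.theta.cov} and $\sigma_{-1, 0, m, k} = \mathbb{E}[Z \theta^{B^H}_{m, k}]$ in \eqref{eq.fbm.theta.cov Z} shows that the partial sum on the left of \eqref{eq.R^H(t, s)} is exactly $\mathbb{E}[B^H_n(t) B^H_n(s)]$. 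It therefore suffices to prove $\mathbb{E}[B^H_n(t) B^H_n(s)] \to \mathbb{E}[B^H(t) B^H(s)]$ and to identify the limit with the fBM covariance $\frac{1}{2}(|t|^{2H} + |s|^{2H} - |t-s|^{2H})$.

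Next I would record the two ingredients that make this limit legitimate. First, $B^H_n \to B^H$ uniformly on $[0, 1]$ almost surely: the coefficients $\theta^{B^H}_{m, k} \sim \sigma_{m, k} N(0, 1)$ with $\sigma_{m, k} \le K |\pi^{m+1}|^{H - \frac{1}{2}}$ satisfy the sub-Gaussian bound \eqref{con.theta bound delta} for any $\epsilon < H$, so Lemma \ref{lem: continuity condition}(ii) applies and gives $B^H_n(t) \to B^H(t)$ pointwise almost surely. Second, the finiteness estimate of Proposition \ref{prop. finiteness condition} with $\ell = 2$ yields the absolute summability $\sum_{m, k} \sum_{m', k'} \mathbb{E}|\theta^{B^H}_{m, k} \theta^{B^H}_{m', k'}| \, e^{\pi}_{m, k}(t) e^{\pi}_{m', k'}(s) < \infty$; the extra contributions from the linear term $m = -1$ are harmless, since $Z$ has finite moments of all orders and $e^{\pi}_{-1, 0}$ is bounded on $[0, 1]$, so the sum over $m, m' \ge -1$ remains finite.

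With these two facts in hand, I would run exactly the Fatou-plus-dominated-convergence argument from the proof of Theorem \ref{thm. finite moments} (applied with $\ell = 2$, with $X = B^H$ including the linear term, and with $t_1 = t$, $t_2 = s$). This interchanges the expectation with the absolutely convergent double sum and gives $\mathbb{E}[B^H(t) B^H(s)] = \sum_{m, k} \sum_{m', k'} \sigma_{m, k, m', k'} e^{\pi}_{m, k}(t) e^{\pi}_{m', k'}(s)$. Because the double sum is absolutely convergent, the nested truncation up to level $n$ appearing on the left of \eqref{eq.R^H(t, s)} converges to the full sum as $n \to \infty$, and $\mathbb{E}[B^H(t) B^H(s)] = \frac{1}{2}(|t|^{2H} + |s|^{2H} - |t-s|^{2H})$ is the defining covariance of fBM, which finishes the proof.

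The step I expect to be the main obstacle is the interchange of limit and expectation in the third paragraph: almost-sure convergence alone does not transfer to the second moment, so some uniform-integrability-type control is required. I choose to supply it through the absolute summability of Proposition \ref{prop. finiteness condition}, which keeps the argument entirely inside the paper's framework. An alternative is to note that the $B^H_n(t)$ are jointly Gaussian and that an almost surely convergent Gaussian sequence automatically converges in $L^2$, but this would invoke external facts about Gaussian families, whereas the summability route reuses machinery already established here. A minor technical point to handle cleanly is the inclusion of the $m = -1$ linear term in Proposition \ref{prop. finiteness condition}, which is stated only for $m \ge 0$.
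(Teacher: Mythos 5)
Your proposal is correct and follows essentially the same route as the paper's own proof: identify the truncated double sum as $\mathbb{E}[B^H_n(t)B^H_n(s)]$, invoke almost-sure convergence of $B^H_n$ to $B^H$, and pass to the limit using the absolute summability bound \eqref{eq. finiteness condition} together with dominated convergence. Your treatment is somewhat more careful than the paper's (explicitly handling the $m=-1$ linear term, which the paper glosses over), but the argument is the same.
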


Note that the Schauder functions $e^\pi_{m,k}$ and the quantities $\sigma_{m, k, m', k'}$ of Corollary \ref{cor. fBM covariance convergence} are deterministic and depend only on the choice of the partition sequence $\pi$ but not on the paths of fBM $B^H$.

We conclude this section with the following corollary of Theorem~\ref{thm. finite moments}; matching the moments of the Schauder coefficients up to a higher order gives rise to an additional property of the fake fractional processes of $Y^H$ in Theorem \ref{thm.fake-fBM}.

\begin{corollary}\label{cor. additional conditions fBM}
Besides conditions (a) and (b) of Theorem \ref{thm.fake-fBM}, suppose that the following also holds
\begin{enumerate}
	\item [(c)] the reciprocal $1/H$ of Hurst index is an even integer, which we denote by $p := 1/H \in 2\mathbb{N}$, and the two families of Schauder coefficients $\{\theta^{Y^H}_{m, k}\}$ and $\{\theta^{B^H}_{m, k}\}$ satisfy the conditions \eqref{con.matching moment} and \eqref{ineq. moment bounds} up to order $\ell = p$, such that the finite-dimensional distributions have the same moments up to order $p$, i.e.,
	\begin{equation}    \label{eq. same moment up to p}
		\mathbb{E} \Big[ \prod_{j=1}^{n} Y^H(t_j) \Big] = \mathbb{E} \Big[ \prod_{j=1}^{n} B^H(t_j) \Big] \quad \text{for every } n \le p.
	\end{equation}
\end{enumerate}
Then, the fake fractional process $Y^H$ shares an additional property (fBM vii) with fBM $B^H$:
\begin{enumerate} [label=(fBM \roman*)]
	\setcounter{enumi}{7}
	\item we have the convergence of the (discrete) $p$-th variation for every $t \in [0, 1]$
	\begin{equation}    \label{convergence in mean c_p}
		\lim_{n \rightarrow \infty} \mathbb{E} \bigg[ \sum_{\substack{t^n_i, t^n_{i+1} \in \pi^n \\ t^n_{i+1} \le t}} \big|Y^H(t^n_{i+1})-Y^H(t^n_{i})\big|^p \bigg] = C_p t, \quad \text{ where } \quad C_p := \mathbb{E} \vert N(0, 1) \vert^p. 
	\end{equation}
\end{enumerate}
\end{corollary}

In Figures 7 and 8 (b) of Supplement B \cite{suppB}, the graphs of the quartic variations ($1/4$-th variations) of fake processes seem to be linear, even though we didn't match the Schauder coefficients up to order $p = 1/H = 4$. Especially, the quartic variation of $Y^{H}_{mix}$ in Figure 8(b) has almost the same slope as that of real fBM.

Furthermore, the graphs of the $3/4$-th variations in Figure 9(b) of Supplement B \cite{suppB} also look linear, though the Hurst index $H = 3/4$ is not a reciprocal of an even integer. Therefore, we \textbf{conjecture} that the $(1/H)$-th variation of fake fractional processes along a balanced, completely refining partition sequence $\pi$ will be linear with slope equal to $C_{1/H}:= \mathbb{E}[|N(0, 1)|^{\frac{1}{H}}]$, if we match the first $m= \Big\lceil \frac{1}{H} \Big \rceil$ moments of the Schauder coefficients with that of real fBM with Hurst index $H$.

\section{Conclusion}\label{sec: conclusion}

Using (generalized) Schauder representation, we constructed a stochastic process with any H\"older regularity by controlling the growth of Schauder coefficients along given partition sequence on a finite interval (Theorem \ref{main.thm}). Moreover, we can make the finite moments of the process as we want, by controlling the joint moments between the Schauder coefficients (Theorem \ref{thm. finite moments}). These results are combined to suggest a new way of constructing stochastic processes which are statistically indistinguishable from Brownian motion or fractional Brownian motions.

Our results bring two important messages for mathematical modeling with stochastic processes, especially in finance. First, when measuring the roughness of a given function or process (e.g. for stock volatility), H\"older regularity and variation index are two different notions. Therefore, when observing a sample path from a process, we should not measure its H\"older roughness by computing $p$-th variation. Secondly, due to their full-fledged theory, Brownian and fractional Brownian motions have been widely used for modeling stochastic phenomena in various fields. However, our construction of fake processes suggests that a `model-free' approach is important; we cannot conclude from a given dataset that the underlying process follows certain dynamics involving (fractional) Brownian motions, even though it exhibits the same pathwise and statistical properties as those Gaussian processes.

Our method of construction in Sections \ref{sec. mimicking BM} and \ref{sec. mimicking fBM} can also be useful for generating diverse sets of training data for deep neural networks modeling financial time series (e.g. \cite{hamdouche2023generative}), which we leave as future work.

\begin{funding}
    E. Bayraktar is supported in part by the National Science Foundation under grant DMS-2106556 and by the Susan M. Smith Professorship.
\end{funding}
\begin{supplement}
    \stitle{Supplement A \cite{suppA} : Additional proofs}
    \sdescription{This supplement material includes the additional proofs of the results in this paper.}
\end{supplement}
\begin{supplement}
    \stitle{Supplement B \cite{suppB} : Additional Figures and Tables}
    \sdescription{This supplement material contains additional Figures and Tables for Section \ref{sec. mimicking BM} and \ref{sec. mimicking fBM}.}
\end{supplement}
\bibliographystyle{imsart-number.bst}
\bibliography{pathwise1.bib}

\includepdf[pages=-]{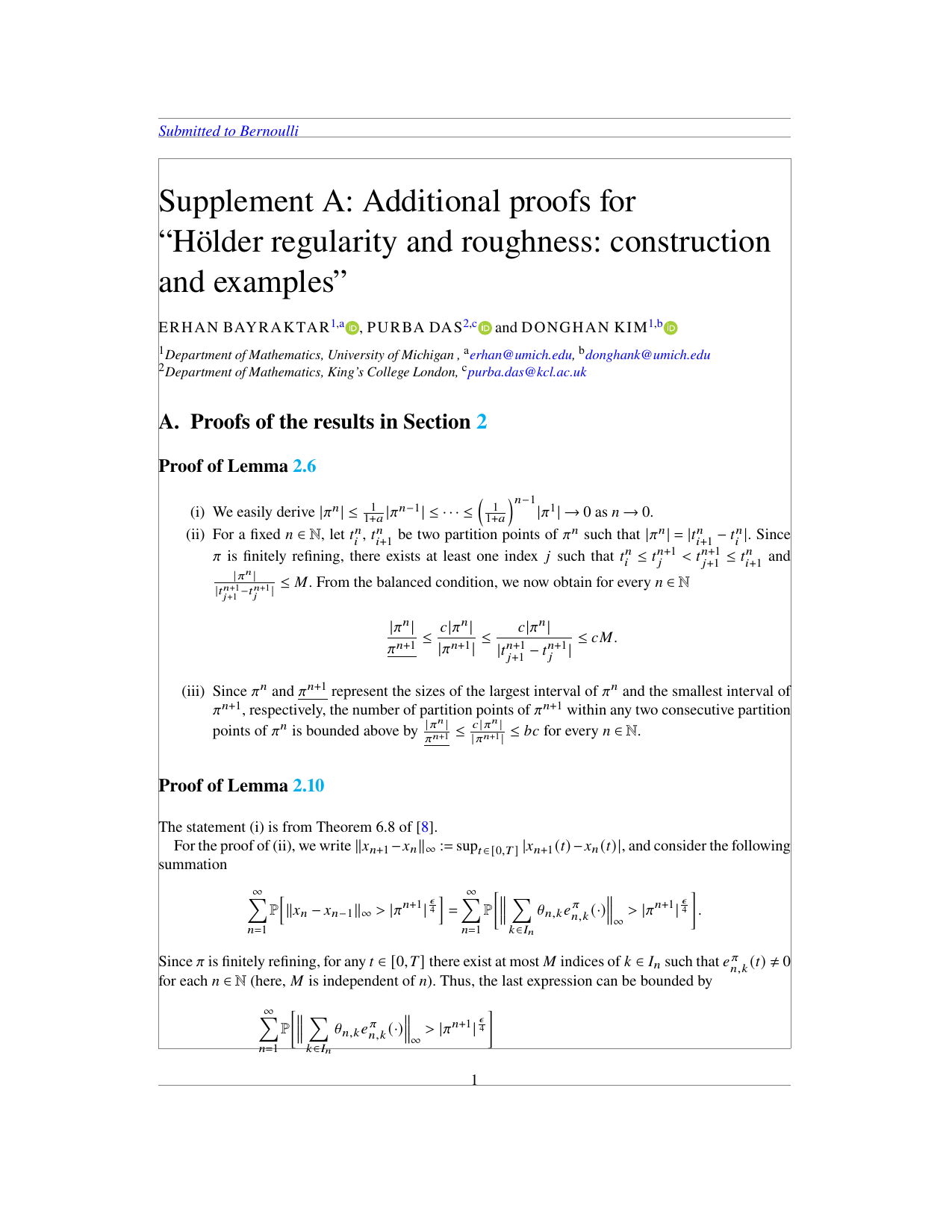}
\includepdf[pages=-]{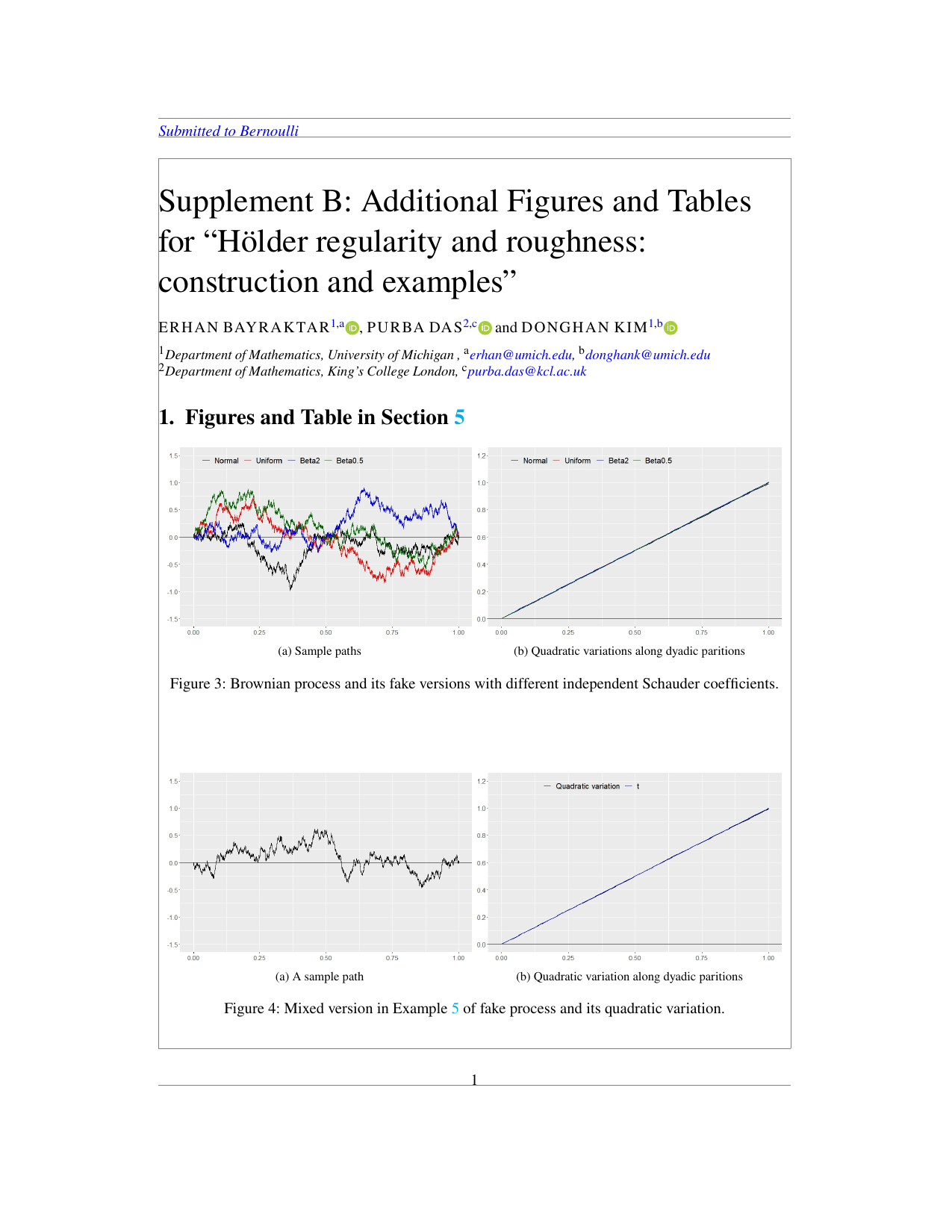}

\end{document}